\newcommand{\sexp}{{\rm sexp} }
\newtheorem{lemma}{Lemma}
\newtheorem{theorem}{Theorem}
\newtheorem{proposition}{Proposition}
\newtheorem{corollary}{Corollary}
\newtheorem{definition}{Definition}
\newtheorem{remark}{Remark}
\newtheorem{fact}{Fact}  
\newtheorem{example}{Example}
\newtheorem{Acknow}{Acknowledgement}
\newcommand{\nth}[1]{$#1 \hbox{-th }$}
\newcommand{\ord}{{\rm ord} }
\newcommand{\eord}{{\rm ess. ord} }
\newcommand{\Z}{\mathbb{Z}}
\newcommand{\rg}[1]{\mathbf{#1}}
\newcommand{\eu}[1]{\mathfrak{#1}}
\newcommand{\re}[1]{\mathbf{\mathfrak{#1}}}
\newcommand{\id}[1]{\mathcal{#1}}
\newcommand{\Gal}{\hbox{Gal}}
\newcommand{\Ker}{\hbox{ Ker }}
\newcommand{\Coker}{\hbox{ Coker }}
\newcommand{\disc}{\mbox{disc}}
\newcommand{\prk}{p\hbox{-rk} }
\newcommand{\eprk}{ess. \ p\hbox{-rk}}
\newcommand{\zprk}{\Z_p\hbox{-rk}}
\newcommand{\zrk}{\Z\hbox{-rk} }
\newcommand{\rf}[1]{(\ref{#1})}
\newcommand{\ran}{\rangle}
\newcommand{\lan}{\langle}
\newcommand{\rad}{\hbox{rad}}
\newcommand{\Rad}{\hbox{Rad}}
\newcommand{\pinf}{^{1/p^{\infty}}}
\newcommand{\Norm}{\mbox{\bf N}}
\newcommand{\lchooses}[2]{\left( \frac{#1}{#2 } \right)}
\newcommand{\F}{\mathbb{F}}
\newcommand{\KP}{\mathbf{\Phi}}
\newcommand{\A}{\mathbb{A}}
\newcommand{\B}{\mathbb{B}}
\newcommand{\K}{\mathbb{K}}
\newcommand{\LK}{\mathbb{L}}
\newcommand{\KH}{\mathbb{H}}
\newcommand{\KL}{\mathbb{L}}
\newcommand{\M}{\mathbb{M}}
\newcommand{\Q}{\mathbb{Q}}
\newcommand{\C}{\mathbb{C}}
\newcommand{\N}{\mathbb{N}}
\def\ra{\rightarrow}
\begin{document}
{\obeylines \small
\vspace*{-1.0cm}
\hspace*{3.0cm}Par les gosses battus, par l'ivrogne qui rentre
\hspace*{3.0cm}Par l'\^{a}ne qui re\c{c}oit des coups de pied au ventre
\hspace*{3.0cm}Et par l'humiliation de l'innocent ch\^{a}ti\'{e}
\hspace*{3.0cm}Par la vierge vendue qu'on a d\'{e}shabill\'{e}e
\hspace*{3.0cm}Par le fils dont la m\`{e}re a \'{e}t\'{e} insult\'{e}e
\vspace*{0.2cm} 
\hspace*{4.0cm} Je vous salue, Marie\footnote{Francis Jammes: {\em Pri\`ere}. Music by Georges Brassens}
\vspace*{0.4cm}
\hspace*{4.5cm} {\it To Theres and Seraina, to the memory of Marta }
\vspace*{0.5cm}
\smallskip}
\title[Leopoldt's Conjecture] {On CM $\Z_p$-extensions and the Leopoldt conjecture for CM fields} \author{Preda
  Mih\u{a}ilescu} \address[P. Mih\u{a}ilescu]{Mathematisches Institut
  der Universit\"at G\"ottingen}
\email[P. Mih\u{a}ilescu]{preda@uni-math.gwdg.de}
\keywords{11R23 Iwasawa Theory, 11R27 Units}
\date{Version 1.0 \today}
\vspace{0.5cm}
\begin{abstract}
  We show that Leopoldt's conjecture holds in CM fields. For the proof
  we construct a $CM \Z_p$-extension of some CM field in which the
  Leopoldt conjecture is supposed to fail, and using the classes of
  primes which are completely split in this extension, we derive an
  contradiction. The method of proof can be described as a {\em
    stability check of $\Lambda$-modules under deformations in Thaine
    shifts}.
 \end{abstract}
\maketitle
\tableofcontents
\section{Introduction}
Let $p$ be an odd rational prime and $\K/\Q$ be a finite galois CM
extension with group $\Delta$, of which we shall assume that it
contains the \nth{p} roots of unity. We denote by $\K_{\infty}$ the
cyclotomic $\Z_p$-extension of $\K$ and $\KL$ any other $\Z_p$
extensions. The intermediate fields will be $\K_n$, resp. $\KL_n$.

If $\KL/\K$ is an arbitrary $\Z_p$-extension, with group $\Gamma \cong
\Z_p$, generated by $\tau \in \Gamma$ as a topological generator, the
Iwasawa algebra is $\Lambda = \Z_p[[ T ]], T = \tau - 1$. The
intermediate fields are $\KL_n \subset \KL$ and, if $\KL =
\K_{\infty}$ is the cyclotomic $\Z_p$-extension, then we always assume
that $\K_n = \KL_n$ contains exactly the \nth{p^n} roots of unity, but
not the \nth{p^{n+1}} ones; this can be achieved by an adequate
numeration, at least for some sufficiently large $n$.
 
We write $\tau_n = (T+1)^{p^{n-k}}$ for the power of $\tau$ that
generates the fixing group of $\K_n$, and $\omega_n = \tau_n - 1;
\nu_{n+j,n} = \omega_{n+j}/\omega_n, j > 0$. The $p$-parts of the
ideal class groups of $\K_n, \KL_n$ are $A_n = A(\K_n), A(\KL_n)$ and
$A = A(\K) = \varprojlim_n A(\K_n), A(\KL) = \varprojlim_n A(\LK_n)$;
the groups $A'_n, A'$ are defined like $A_n, A$, with respect to the
ideal classes of the $p$-integers of $\K_n, \KL_n$. The groups $B_n
\subset A_n$ are the maximal subgroups generated by classes containing
ramified primes above $p$ and $B = \varprojlim_n B_n$, while $A' =
A/B$. We note that the base field $\K$ can be modified within the same
$\Z_p$-extension, by replacing $\K$ with $\K_n$, say. As a
consequence, the Iwasawa algebra may become $\Lambda' = \Z_p[[
\omega_n ]]$.
\subsection{Notation and questions}
For arbitrary number fields $\K$, the cyclotomic $\Z_p$-extension
is denoted by $\K_{\infty}$.
In some parts of this paper we shall consider also a further $\Z_p$-extension
 $\KL/\K$, setting some important additional conditions on the
intersection $\KL \cap \K_{\infty}$. In such context we encounter at
least two different Iwasawa algebras; additional variation can result
from changing the base field as mentioned above. We restrict our
introductory notation and remarks to the above; the precise choices
and adequate notations for combined extensions will be introduced in
the given context. Here we still restrict to the context of one single,
not necessarily cyclotomic, $\Z_p$-extension $\KL/\K$ and introduce
some additional concepts and notations.

\begin{definition}
\label{initdef}
For $f \in \Z_p[ T ]$ a distinguished polynomial and $M$ an additively
written $\Lambda$-torsion module, we write
\begin{eqnarray}
  \label{torsion}
  M(f) & = & \{ a \in A \ : \ \exists m: f^m(T) a = 0 \} \quad \hbox{ and } \nonumber \\
  M[ f ] & = & \{ a \in A \ : \ f(T) a = 0 \}.
\end{eqnarray}
If $X$ is a finite abelian $p$-group, its exponent is $\exp(X) =
\min\{ p^m \ : \ p^m X = 0 \}$.  The {\em subexponent} is the smallest
size of a cyclic direct summand in $X$, thus $\sexp(X) = \min \{
\ord(x) \ : \ x \in X \setminus p X \}$.

We let $F(A) \subset A$ be the maximal finite $\Lambda$-submodule of
$A$, while $A^{\circ}$ denotes the $\Z_p$-torsion submodule, so $F(A)
\subset A^{\circ}$.
\end{definition}
If $\K$ is a number field, we denote its units by $E(\K) =
\id{O}^{\times}(\K)$. Dirichlet's unit theorem states that, up to
torsion made up by the roots of unity $W(\K) \subset \K^{\times}$, the
units $E = \id{O}(\K)^{\times}$ are a free $\Z$ - module of $\Z$ -
rank $r_1 + r_2 - 1$. As usual, $r_1$ and $r_2$ are the numbers of
real, resp. pairs of complex conjugate embeddings $\K \hookrightarrow
\C$. We consider the set $P = \{ \wp \subset \id{O}(\K) : (p) \subset
\wp \}$ of distinct prime ideals above $p$ and let
\[ \re{K}_p = \re{K}_p(\K) = \prod_{\wp \in P } \K_{\wp} = \K \otimes_{\Q} \Q_p
\] 
be the product of all completions of $\K$ at primes above $p$.  Let
$\iota : \K \hookrightarrow \re{K}_p$ be the diagonal embedding. We
write $\iota_{\wp}(x)$ for the projection of $\iota(x)$ in the
completion at $\wp \in P$. If $y \in \re{K}_p$, then $\iota_{\wp}(y)$
is simply the component of $y$ in $\K_{\wp}$. If $U \subset
\re{K}_p^{\times}$ is the group of units, thus the product of local
units at the same completions, then $E$ embeds diagonally via $\iota :
E \hookrightarrow U$.

Let $\overline{E} = \overline{\iota(E)} = \bigcap_{ n > 0} \iota(E)
\cdot U^{p^n} \subset U$ be the $p$-adic closure of $\iota(E)$; this
is a $\Z_p$ - module with $\zprk(\overline{E}) \leq \zrk(E) = r_1 +
r_2 - 1$. The difference
\[ \id{D}(\K) = \zrk(E) - \zprk(\overline{E}) \] is called the
\textit{Leopoldt defect}. The defect is positive if, in the idelic
embedding, units which are $\Z$-independent, are related $p$-adically.
Equivalently, if the $p$ - adic regulator of $\K$ vanishes.

Leopoldt suggested in \cite{Le} that $\id{D}(\K) = 0$ for all number
fields $\K$. This conjecture of Leopoldt was proved for abelian
extensions by Brumer \cite{Br} in 1967, using a result of Ax \cite{Ax}
and a local version of Baker's linear forms in logarithms
\cite{Ba}. It is still open for arbitrary non abelian
extensions. Since 1967 various attempts have been undertaken for
extending the results of \cite{Br} to non abelian extensions, using
class field theory, Diophantine approximation or both. The following
very succinct list is intended to give an overview of various
approaches, rather than being an extensive list of results on
Leopoldt's conjecture. In \cite{Gr}, Greenberg notes a relation
between the Leopoldt Conjecture and a special case of the Greenberg
Conjecture: he shows that Leopoldt's Conjecture implies that $A(T)$
(see \S 1.1. for the definitions) is finite for totally real fields,
i.e. the Greenberg Conjecture holds for the $T$ - part.

Emsalem, Kisilevsky and Wales \cite{EKW} use group representations and
Baker theory for proving the Conjecture for some small non abelian
groups; this direction of research has been continued in some further
papers by Emsalem or Emsalem and coauthors. Jaulent proves in
\cite{Ja2} the Conjecture for some fields of small discriminants,
using the \textit{phantom} field $\KP$ which we define in the
Appendix. Currently the strongest result based on Diophantine
approximation was achieved by Waldschmidt \cite{Wal}, who proved that
if $r$ is the $\Z$ - rank of the units in the field $\K$, then the
Leopoldt defect satisfies $\id{D}(\K) \leq r/2$.

The connection of Leopoldt's conjecture to class field theory was
already noted by Iwasawa in \cite{Iw}. He shows that if $\Omega(\K)
\supset \K_{\infty}$ is the maximal $p$-abelian $p$-ramified extension
of $\K$, then $\Gal(\Omega(\K)/\K) \sim \Z_p^n$, where $n =
r_2+1+\id{D}(\K)$; the proof of this fact is in any text book on
cyclotomy and Iwasawa theory. For CM extensions $\K$, the
contraposition of the conjecture herewith reduces to the statement
that $\K^+$ has more $\Z_p$-extensions than just the cyclotomic
one. It is this assumption which we shall use and lead to a
contradiction. If Leopoldt fails thus for $\K^+$ and if $\KL^+/\K^+$
is a further $\Z_p$-extension, it will be totally real and $\KL =
\KL^+ \cdot \K$ will be a CM extension. Starting from this fact, we
prove in this paper:
\begin{theorem}
\label{main}
For odd primes $p$, the Leopoldt defect vanishes in arbitrary CM
extensions $\K$.
\end{theorem}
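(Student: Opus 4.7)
The plan is to argue by contradiction, exactly along the lines sketched at the end of the introduction. Suppose $\id{D}(\K) > 0$ for some CM field $\K$ containing the $p$-th roots of unity. By the theorem of Iwasawa on the maximal $p$-abelian $p$-ramified pro-$p$ extension $\Omega(\K^+)/\K^+$, the real subfield then carries a $\Z_p$-extension $\KL^+/\K^+$ independent of the cyclotomic one. Such $\KL^+$ is automatically totally real, so $\KL := \KL^+\cdot\K$ is a CM $\Z_p$-extension of $\K$ with $\KL\cap\K_\infty$ finite over $\K$. The task reduces to showing that the simultaneous existence of the two independent CM $\Z_p$-extensions $\K_\infty$ and $\KL$ is incompatible with class field theory.

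The core of the argument should proceed in two coupled moves. First, one organises the class field theoretic data of the bicyclotomic tower: $\K_\infty\cdot\KL$ is (after a harmless finite base change) a $\Z_p^2$-extension of $\K$, and the inverse limit class group carries commuting actions of the two Iwasawa algebras $\Lambda=\Z_p[[T]]$ and $\Lambda'$, producing a $\Z_p[[S,T]]$-module on which the hypothetical direction $\KL$ appears as a distinguished line. Within $A(\KL)$ one isolates the finite submodule $F(A(\KL))$, the $B$-part supported at primes over $p$, and the $p$-integer quotient $A'(\KL)$; the defect $\id{D}(\K) > 0$ will have to manifest itself as extra rank somewhere in this decomposition. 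Second, one extracts annihilators from primes $q$ of $\K_n$ which split completely in $\KL_m/\K_n$ for $m \gg n$. By Chebotarev these exist with positive density, and for each such prime the decomposition-stable classes yield Thaine-type relations in $\Lambda'$. Moving the base field from $\K$ up to $\K_n$ (the ``Thaine shift'' of the abstract) produces a compatible family of such relations; their Galois-theoretic rigidity along the cyclotomic tower is the stability check one needs.

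The main obstacle, to my eye, is the last step: converting this web of annihilators into a numerical contradiction with the rank prediction coming from $\id{D}(\K) > 0$. The two Iwasawa algebras act through distinct topological generators, the modules involved are not of finite $\Z_p$-rank, and the Thaine-type relations are only effective on classes that avoid the capitulation kernel, so one needs sharp control on how $F(A(\KL_n))$, $B(\KL_n)$ and the subexponents $\sexp(A(\KL_n))$ behave as $\K$ is replaced by $\K_n$. The argument must be effective and not merely asymptotic: a growth rate that is $o(p^n)$ but still unbounded would leave room for the defect to reappear in the limit. The crux is therefore to show that the family of completely split primes produces enough annihilators, uniformly in $n$, to collapse the putative second $\Z_p$-direction $\KL$ to the cyclotomic one — which would contradict the choice of $\KL$ and thereby prove Theorem~\ref{main}.
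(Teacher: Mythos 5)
Your set-up is the same as the paper's (contrapositive of Iwasawa's rank formula for $\Omega(\K^+)$, a non-cyclotomic totally real $\Z_p$-extension $\KL^+$, the CM extension $\KL=\KL^+\cdot\K$, split primes and a Thaine shift), but what you have written is the outline from the introduction, not a proof: the step you yourself flag as ``the main obstacle'' — turning the split-prime data into a contradiction — is exactly the content of the paper, and it is absent from your proposal. Moreover, the mechanism you gesture at (a $\Z_p[[S,T]]$-module for the bicyclotomic tower, plus Thaine-type \emph{annihilators} that ``collapse'' the second $\Z_p$-direction) is not the one that works here and there is no indication how it could: Thaine/Euler-system relations bound class groups, whereas the obstruction to Leopoldt lives in the radical $A^-[T^*]$ governing the extra CM $\Z_p$-extensions (Proposition \ref{fantom}), and you give no bridge from annihilation statements to the vanishing of that radical. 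The paper never uses a two-variable algebra; it works with single $\Z_p$-extensions after carefully engineered base-field shifts.

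Concretely, the missing argument runs as follows, and none of its ingredients appear in your sketch. One first builds an auxiliary \emph{abelian} field $\rg{k}$ by an \emph{inert} Thaine shift over an imaginary quadratic field (Lemma \ref{inert}, Lemma \ref{kex}), so that $A^-(\rg{k})$ is essentially $\Lambda$-cyclic of $\lambda$-rank $p(p-1)$ and contains a $\Z_p$-pure submodule $H'(p)$; the split prime $\eu{q}$ is then chosen inside a class lifted from $\rg{k}$ (Lemma \ref{lat}). In the split Thaine shift $\KL'=\KL\cdot\F$ one studies the norm-coherent sequences $a$ (split primes) and $b$ (ramified primes above them) with $pb=a$; the decomposition theory of Chapter 2 (after shifting the base so that $TA^-\subset D^-$, Lemma \ref{nmm}) gives $Tb=b_\lambda+b_\mu$, and the Hasse Norm Principle forces $H^0(F,A^-(\KL'))$ to be a \emph{free} $\F_p[[T]]$-module of positive rank (Proposition \ref{h0split}); torsion-freeness then pushes $b_\lambda$ into $\iota(A^-(\KL))$, yielding the relation $p\,T^*\gamma = TT^*a_\lambda$. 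Taking norms down to $\rg{k}$ at the finite level $M$ and invoking the purity of $H'(p)$ (Corollary \ref{lowpart}) forces $Th_M=0$, i.e. $\prk(\Lambda h_M)=1$, contradicting $\lambda^-(L)=p(p-1)>1$. Your worries about capitulation, subexponents and uniformity are legitimate, but the paper answers them by the stabilization results (Proposition \ref{fuk}, Lemma \ref{ab}, Fact \ref{down}) and the CM-specific Lemma \ref{almin}, not by any density or counting of annihilators; as it stands your proposal identifies the problem but does not solve it.
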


We shall use the following notations, for arbitrary fields $\rg{K}$:
the maximal unramified $p$-abelian extension is $\KH(\rg{K})$. If
$\rg{K}$ is CM, then complex conjugation acts on galois groups, and
maximal extensions split naturally in plus and minus parts: for
instance, $\KH^+(\rg{K})$ is the subfield fixed by
$\Gal(\KH/\rg{K})^{1-\jmath}$, with $\jmath \in \Gal(\rg{K}/\Q)$ the
restriction of complex conjugation to this field. The maximal
$p$-abelian, $p$-ramified extension is denoted by $\Omega(\rg{K})$ and
$\M(\rg{K}) \subset \Omega(\rg{K})$ is the product of all
$\Z_p$-extensions of $\rg{K}$, while $\rg{K}_{\infty}/\rg{K}$ is the
cyclotomic $\Z_p$-extension.

For CM extensions we define $\M^+(\rg{K})
= \M(\rg{K}^+) \cdot \rg{K}$. The Leopoldt conjecture holds for
$\rg{K}$ iff $\M^+(\rg{K}) = \rg{K}_{\infty}$, and in general we have
the rank equality
\[ \zprk(\Gal(\M^+(\rg{K})/\rg{K})) = \id{D}(\rg{K}) + 1 . \] The term
$1$ on the right hand side stands for the cyclotomic $\Z_p$-extension,
which is the only one which is expected to exist, if Leopoldt's
conjecture is true. We note that $\M^+(\rg{K})$ is also the product of
all CM $\Z_p$-extensions of $\rg{K}$ and Leopoldt's conjecture thus
claims that $\rg{K}$ has only one CM $\Z_p$-extension, namely the
cyclotomic one. The CM property of $\Z_p$-extensions plays a crucial
role in our proof.

\begin{remark}
\label{shift}
\begin{itemize}
\item[ A. ] If $\K_{-1}$ is a field for which $\id{D}(\K_{-1}) > 0$,
  then it is known that the same holds for arbitrary finite algebraic
  extensions $\K/\K_{-1}$; this is noted, for instance, by Laurent in
  the introduction to \cite{Lau}. We shall use negative indices for
  designing number fields which will first be enlarged for certain
  purposes, before considering actual $\Z_p$-extensions. We thus keep
  the notation $\K$ for base fields of the $\Z_p$-extensions of
  interest.

  It follows from the fact that the linear relations between
  $\Z$-generators of the units of $E(\K_1)$, which arise upon $p$-adic
  completion, will be preserved under the embedding into the units
  $E(\K)$.

\item[ B. ] If $\K$ is a CM extension containing the \nth{p^m} roots
  of unity and $\KL^+/\K^+$ is a $p$-ramified cyclic extension of
  degree $p^m$, then there is a class $a \in A^-(\K)$ such that $\KL^+
  \cdot \K = \K[ a^{1/p^m} ]$ in the sense that for each $\eu{A} \in
  a$ there is an $\alpha \in \eu{A}^{(1-\jmath) p^m}$ with $\KL = \K[
  \alpha^{1/p^m} ]$. We shall use this notation for explicit Kummer
  extensions throughout the paper.

\end{itemize}
\end{remark}
The following result has been proved independently by Babaicev
\cite{Bab} and Monsky \cite{Mo}:
\begin{theorem}
\label{mubound}
Let $\K$ be a number field and let $\M$ be the product of all of its
$\Z_p$-extensions.  Then there is an absolute bound $B = B(\K) > 0$
such that for all $\Z_p$-subextensions $\K \subset \KL \subset \M$, we
have $\mu(\KL) \leq B(\K)$, where $\mu(\KL)$ is Iwasawa's $\mu$
constant for $\KL$.
\end{theorem}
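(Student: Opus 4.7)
The plan is to encode all the $\Z_p$-subextensions of $\M$ simultaneously in a multi-variable Iwasawa algebra. Let $d := \zprk(\Gal(\M/\K))$; by the Iwasawa--Leopoldt rank formula recalled in the introduction, $d = r_2 + 1 + \id{D}(\K) < \infty$, so $G := \Gal(\M/\K) \cong \Z_p^d$ and
$$\Lambda := \Z_p[[ G ]] \cong \Z_p[[ T_1, \ldots, T_d ]].$$
Let $L/\M$ be the maximal unramified abelian pro-$p$ extension and $X := \Gal(L/\M)$; by the weak Leopoldt theorem for multiple $\Z_p$-extensions, $X$ is a finitely generated torsion $\Lambda$-module. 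Since $\Lambda$ is a regular local ring (hence a UFD), the Bourbaki structure theorem for finitely generated modules over Krull domains provides a pseudo-isomorphism
$$X \; \sim \; \bigoplus_{i=1}^{r} \Lambda/(p^{a_i}) \; \oplus \; \bigoplus_{j=1}^{s} \Lambda/(h_j^{b_j}),$$
with $h_j \in \Lambda$ prime elements coprime to $p$.

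For each intermediate $\Z_p$-extension $\K \subset \KL \subset \M$, let $H_{\KL} := \Gal(\M/\KL) \cong \Z_p^{d-1}$ and let $\phi_{\KL}: \Lambda \twoheadrightarrow \Lambda_{\KL} \cong \Z_p[[ T ]]$ be the induced surjection of Iwasawa algebras. My first step would be to compare the classical Iwasawa module $X_{\KL}$ attached to $\KL$ with the specialisation $X / (\ker \phi_{\KL}) X$, showing that the two differ by a $\Lambda_{\KL}$-module whose $\mu$-invariant is bounded uniformly in $\KL$ by a constant $c_0 = c_0(\K)$ --- this is essentially a decomposition-group computation at the finitely many primes of $\K$ above $p$. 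The first summand $\bigoplus_i \Lambda/(p^{a_i})$ in the structure decomposition specialises under every $\phi_{\KL}$ to a $\Lambda_{\KL}$-module of $\mu$-invariant exactly $A := \sum_i a_i$, contributing a uniform term. The real work is bounding the $\mu$-contribution of each $\Lambda/(h_j^{b_j})$ under $\phi_{\KL}$, uniformly in $\KL$.

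The main obstacle --- and the content of the Babaicev--Monsky theorem proper --- is the following specialisation estimate: for every $h \in \Lambda$ coprime to $p$, the valuation $v_p(\phi_{\KL}(h)) \in \Z_{\geq 0}$ is bounded by a constant $C(h)$ independent of $\KL$. To prove this I would parametrise the surjections $G \twoheadrightarrow \Z_p$ by the compact space of lines in $\Hom(G, \Z_p)$, observe that $\ell \mapsto v_p(\phi_\ell(h))$ is upper-semicontinuous and finite off a proper Zariski-closed locus, and then induct on $d$ via Weierstrass preparation with respect to an adapted coordinate. The delicate point is that the coefficients of $h$ do not transform transparently under base change of $G$, so the naive bound via coefficient sizes fails: at each inductive step one must choose adapted coordinates so that the leading Weierstrass polynomial of $h$ remains non-zero mod $p$. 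Once this estimate is in hand, assembling the contributions yields
$$\mu(\KL) \; \leq \; A \; + \; \sum_{j=1}^{s} b_j \cdot C(h_j) \; + \; c_0(\K) \; =: \; B(\K),$$
independently of the choice of $\Z_p$-subextension $\KL \subset \M$, which is the asserted uniform bound.
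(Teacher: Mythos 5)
You should first be aware that the paper contains no proof of this statement: Theorem \ref{mubound} is quoted as an external result of Babaicev and Monsky, so there is no internal argument to compare yours against. Judged on its own, your outline does reproduce the standard strategy (work over $\Lambda=\Z_p[[T_1,\dots,T_d]]$ for $G=\Gal(\M/\K)\cong\Z_p^d$, use that the unramified Iwasawa module $X$ is finitely generated torsion, apply the structure theorem, and reduce to bounding the $\mu$-invariants of one-variable specializations of finitely many power series prime to $p$), but as written it is a plan rather than a proof: the step you yourself call ``the content of the Babaicev--Monsky theorem proper'' is exactly the hard theorem, and the argument you sketch for it does not go through as stated.

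Concretely: parametrizing the rank-one quotients of $G$ by the compact space $\mathbb{P}^{d-1}(\Q_p)$ and noting that $\{\ell : p^k \mid \phi_\ell(h)\}$ is closed only shows that $\ell\mapsto \mu(\phi_\ell(h))$ is upper semicontinuous with values in $\N\cup\{+\infty\}$; compactness then gives no bound precisely near the locus where $\phi_\ell(h)=0$ (e.g.\ $h=T_1$ and the specialization killing $\gamma_1$), because the limiting value $+\infty$ is compatible with $\mu(\phi_{\ell_n}(h))\to\infty$ along a sequence $\ell_n\to\ell$. Ruling out such blow-up near the vanishing locus is where Monsky's actual work lies, and your remark that ``one must choose adapted coordinates so that the leading Weierstrass polynomial of $h$ remains non-zero mod $p$'' is a restatement of what has to be proved (uniformly, via a finite partition of the parameter space), not an argument. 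Two further points are asserted rather than established: (i) the uniform constant $c_0(\K)$ comparing the classical module $X_{\KL}$ with the coinvariants $X/(\ker\phi_{\KL})X$ — for a fixed $\KL$ this is a standard ramification/decomposition computation, but uniformity in $\KL$ (including extensions in which some primes above $p$ split or fail to ramify) needs its own argument; and (ii) the pseudo-isomorphism $X\sim\id{E}(X)$ has pseudo-null kernel and cokernel, and one must show these also contribute boundedly under every specialization, including those $\KL$ with $\phi_{\KL}(h_j)=0$, where the specialized elementary module is not even $\Lambda_{\KL}$-torsion although $X_{\KL}$ is. Until the specialization estimate and these two uniformity statements are proved, the proposal remains an (accurate) roadmap to the Babaicev--Monsky theorem rather than a proof of it.
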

It will be used in our proof for bounding the $\mu$-constant for CM
$\Z_p$-extensions.

\subsection{Plan of the proof}
The proof is inspired by a construction of Iwasawa for showing that
there exist $\Lambda$-extensions with $\mu > 0$. We assume that $K$ is
some CM extension for which the Leopoldt defect does not vanish, and
create first an auxiliary extension $\K \supset K$ which also has
positive Leopoldt defect, together with some additional properties. We
then construct a CM $\Z_p$-extension $\KL/\K$ with $\KL \cap
\K_{\infty} = \K_N$ for some large $N$, and such that there is a prime
$\eu{q} \in a_N \in A^-(\K_N)$ which is totally split in $\KL/\Q$. The
class $a_N$ is induced by ideal lifts from an abelian extension
$\rg{k}$ that we can control. We then define $\F \subset \Q[ \zeta_q
]$ the subfield of degree $p$, with $q$ the rational prime above
$\eu{q}$ and let $\KL' = \KL \cdot \F$, an extension in which the
split primes above $\eu{q}$ will ramify. The explicit construction is
described in the first section of Chapter 3. If $x \in A(\KL)$ for
some $\Z_p$-extension $\KL/\K$, we distinguish the case when $\Lambda
x$ is infinite of finite $p$-rank (the $\lambda$-type) or infinite of
bounded order (the $\mu$-type). Modules which can be split into cyclic
submodules of the two kinds are called decomposed\footnote{See below
  for the formal complete definition.}.  We let $\eu{Q}_n \subset
\KL'_n$ be norm coherent ramified primes above the split primes
$\eu{q}_n \subset \KL_n$, above $\eu{q}$. Letting $b_n = [
\eu{Q}_n^{1-\jmath} ]$, we show that the sequence of classes $b =
(b_n)_{n \in \N} \in A^-(\KL')$ must necessarily be indecomposed --
otherwise a contradiction is easily obtained.

Growth and decomposition of $\Lambda$-modules play an important role
in our proof, and they are investigated at length in the second
chapter. We show in particular that we may choose $\K$ such that $T
A^-(\KL)$ is a decomposed module, for all CM - $\Z_p$-extensions
$\KL/\K$. Thus, the sequence is {\em not far} from being decomposed,
and in fact we have $T b = b_{\lambda} + b_{\mu}$. The ramification
conditions will in fact imply that $b_{\lambda} \in
\iota(A^-(\KL))$. This particular condition will lead to a
contradiction with the choice of $\rg{k}$, a contradiction which shows
that the extension $\KL$ cannot exist.

The CM property of our extensions has an important contribution to the
simplicity of the proof, in that capitulation is reduced to a
$\Z_p$-cyclic, well understood submodule. This simplifications are
treated in \S 2.4. The structure of $b$ and its decomposition is
governed by consequences of the construction and the Hasse Norm
Principle. These consequences are derived in \S 3.2.

\section{Growth, stability and decomposition of
 $\Lambda$-modules}
In this section $\KL/\K$ is an arbitrary $\Z_p$-extension of the
base-field $\K$, which is assumed to be galois over $\Q$ and contain
the \nth{p} roots of unity, for simplicity. In addition, the primes
above $p$ are assumed to be completely ramified in $\KL/\K$. In this
section we develop several properties concerning the growth and
stabilization of $\Lambda$-modules along intermediate extensions of a
$\Z_p$-extension, as well as some properties of the module of
decomposed elements. These properties will be used in the next chapter
in order to define a particular tower of extensions with respect to
which we shall perform the proof of the main Theorem. In both
chapters, base fields will be denoted by $\K$ or some related
notation, and we consider one or more $\Z_p$-extensions thereof.  In
particular, the precise properties of the base field $\K$ for which we
perform the final proof, will be only given in the third chapter.

The Iwasawa algebra is defined like in the introduction and we recall
that $A$ is a finitely generated $\Lambda$-torsion module. We
associate elementary modules to $A$ as follows:
\begin{definition} 
\label{elmod}
Let $A$ be a finitely generated $\Lambda$-torsion module and
$\id{E}(A) = \id{E}(A)_{\lambda} \bigoplus \id{E}(A)_{\mu}$ be an {\em
  elementary $\Lambda$-module}, with $\id{E}(A)_{\mu} = \bigoplus_{i =
  1}^m \Lambda/(p^{e_i})$ and $\id{E}(A)_{\lambda} = \bigoplus_{j=1}^n
\Lambda/(f_j^{e_j})$. If $\id{E}(A) \sim A$ are pseudoisomorphic, we
say that the {\em elementary module} $\id{E}(A)$ is {\em associated to
  $A$}. The $\mu$-part $\id{E}(A)_{\mu}$ is uniquely determined by
$A$, while the distinguished polynomials $f_j \in \Z_p[ T ]$ and their
exponents $e'_j$ occurring in $\id{E}(A)_{\lambda}$ can
vary.\footnote{They can be fixed by assuming they are
  irreducible. However, such a choice can result in an increase of
  either kernel or cokernel, which might be undesirable in certain
  cases.}
\end{definition}

The following notions are connected to $\Lambda$-modules:
\begin{definition}
\label{type}
Let $\KL/\K$ be a $\Z_p$-extension and $\Lambda$ the associated
Iwasawa algebra. Let $N$ be some finitely generated $\Lambda$-torsion
module. We say that $a \in N$ is of {\em $\lambda$-type}, if $\Lambda
a$ is infinite of finite $p$-rank and of {\em $\mu$-type} if $\Lambda
a$ is infinite of finite order. Finally $a$ is of finite type, if
$\Lambda a$ is finite. Accordingly, $N$ is of one of the three types,
if it is generated by elements of one of the three types. Note that
modules of $\lambda$ or $\mu$-type can contain finite submodules.

The maximal finite $\Lambda$-module is $F(N)$ while $M(N) :=
N^{\circ}$ is its $\Z_p$-torsion submodule.  Note that $M(N)$ is at
the same time the module of all elements which are either of $\mu$- or
of finite type.  We let $L(N) = \{ x \in N \ : \ \prk(\Lambda x ) <
\infty \}$, the module of all elements that are either of $\lambda$-
or of finite type.

An element $x \in A(\KL)$ is {\em decomposable}, if there are
$x_{\lambda} \in L(A), x_{\mu} \in M(A)$, such that $x = x_{\lambda} +
x_{\mu}$. It is indecomposable otherwise. If $x$ is decomposable and
$x = x_{\lambda}+x_{\mu} = x'_{\lambda} + x'_{\mu}$ are two
decompositions, then $x'_{\lambda} - x_{\lambda} = -(x'_{\mu} -
x_{\mu}) \in F(A)$. The submodule
\[ D := \{ x = y + z \ : \ y \in L, z \in M \} \subset A \] is the
module of decomposable elements.
\end{definition}

If $\KH/\KL$ is the maximal abelian unramified extension of $\KL$, we
denote the Artin map by $\varphi : A(\KL) \ra \Gal(\KH/\KL)$.  It is
not difficult to see that $[ A : D ] < \infty$. Indeed, if $\psi :
\id{E}(A) \ra A$ is a pseudoisomorphism, then $\Ker(\psi) = 0$ since
the kernel is finite and $\id{E}(A)$ has no finite submodule. Thus $D'
:= \psi(\id{E}(A)) \subset A$ is a decomposed submodule, thus $D'
\subset D$. Since $\psi$ is a pseudoisomorphism, the cokernel is
finite so $\Coker(\psi) = A/D'$ is finite. A fortiori, $[ A : D ] \leq
[ A : D' ] < \infty$, so $A/D$ is finite.

If $x \in A \setminus D$, the $L$- and the $D$-orders of $x$ are,
respectively
\begin{eqnarray}
\label{dords}
\ell(x) & = & \min\{  j > 0 \ : \ p^{j} x \in L \}, \quad \hbox{and} \\
\delta(x) & = & \min\{  k > 0 \ : \ p^{k} x \in D \} \leq \ell(x). \nonumber
\end{eqnarray}

We may associate the modules to the extension, writing $L(\KL),
M(\KL), F(\KL)$ and note that the canonical submodules $L(\KL), M(\KL)
\subset A(\KL)$ verify
\begin{eqnarray}
\label{decolm}
D(\KL) = L(\KL)+M(\KL), \quad L(\KL) \cap M(\KL) = F(\KL)
\end{eqnarray}
Throughout the paper, unless otherwise specified, the distinguished
polynomial $F(T) \in \Z_p[ T ]$ will denote the minimal annihilator
polynomial of $L(\KL)$, i.e. the least common multiple of the minimal
annihilators $f_a(T) \in \Z_p[ T ]$ of all elements $a \in L(A)$.  For
$x \in M$ the order is naturally defined by $\ord(x) = \min\{ p^k \ :
\ p^k x = 0 \}$ and the {\em essential order} is $\eord(x) = \ord(T^j
x)$ for all but possibly finitely many $j \geq 0$. If $\psi : M \ra
\id{E}(M)$ is a pseudoisomorphism, then $\eord(x) = \ord(\psi(x))$,
since $\id{E}(M)$ has no finite submodules.
\begin{lemma}
\label{mpart}
Let $x \in M$ and $p^k = \ord(x), \ p \leq p^l = \eord(x) \leq
\ord(x)$.  Then for any $g \in \Lambda$ we have $g x = 0 \Rightarrow g
\equiv 0 \bmod p^l$ and there is a distinguished polynomial $G(T) \in
\Z_p[ T ]$ such that $G(T) A \subset M$, while $\ord(y) = \eord(y)$
for all $y \in G(T) A$.

If $x \in A \setminus D$ and $p^{\delta (x)} x = c + z, c \in L, z \in
M$, where $\delta(x)$ is the order defined in \rf{dords}, then $c \not
\in p L$.
\end{lemma}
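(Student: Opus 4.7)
The plan is to dispatch the three assertions via a pseudoisomorphism $\psi : A \to \id{E}(A) = \id{E}(A)_\lambda \oplus \id{E}(A)_\mu$ with $\ker \psi = F(A)$ (such $\psi$ exists because $\id{E}(A)$ contains no nonzero finite submodule while $F(A)$ is the maximal finite submodule of $A$), exploiting the identity $\eord(x) = \ord(\psi(x))$ recalled in the text.

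For the first claim, the hypothesis $\eord(x) = p^l \geq p$ forces $\psi(x) \neq 0$ of order $p^l$. Writing $\psi(x) = (a_i)_i \in \id{E}(M) = \bigoplus_i \Lambda/(p^{e_i})$, some component $a_{i_0}$ equals $p^{e_{i_0} - l} b_{i_0} \pmod{p^{e_{i_0}}}$ with $b_{i_0} \notin p \Lambda$. From $g x = 0$ we obtain $g \psi(x) = 0$, in particular $g b_{i_0} \in p^l \Lambda$. Since $\Lambda = \Z_p[[T]]$ is a UFD with $p$ prime and $b_{i_0}$ a $p$-adic unit, we conclude $p^l \mid g$.

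For the second claim, I would set $G = F \cdot G_1$ where $F$ is the minimal annihilator of $L$ (available by the text's convention) and $G_1$ is a distinguished polynomial annihilating the finite $\Lambda$-module $F(A)$; such $G_1$ exists because the continuous $\Gamma$-action on $F(A)$ factors through a finite quotient, so $\omega_n$ annihilates $F(A)$ for $n$ large. The inclusion $G A \subset M$ follows since $F$ annihilates $\id{E}(A)_\lambda$, forcing $\psi(F A) \subset \id{E}(A)_\mu$, which has bounded $p$-exponent, so that $F A \subset A^\circ = M$. The equality $\ord(y) = \eord(y)$ on $G A$ reduces to proving $G A \cap F(A) = 0$: given $z = G a \in F(A)$, we have $\psi(z) = 0$, hence $G \psi(a) = 0$; distinguishedness of $G$ makes it a nonzerodivisor on $\id{E}(A)_\mu$, so the $\mu$-part of $\psi(a)$ vanishes, i.e.\ $\psi(a) \in \id{E}(A)_\lambda$. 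Then $F a \in \ker \psi = F(A)$, whence $G a = G_1 (F a) = 0$ since $G_1$ kills $F(A)$. With $G A \cap F(A) = 0$, the map $\psi$ is injective on $G A$, so $\ord(y) = \ord(\psi(y)) = \eord(y)$.

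For the third claim, assume toward contradiction that $c = p c'$ with $c' \in L$. Setting $u := p^{\delta(x) - 1} x - c'$, we obtain $p u = z \in M$, and since $M = A^\circ$ is the $\Z_p$-torsion submodule of $A$, this forces $u \in M$. Then $p^{\delta(x) - 1} x = c' + u \in L + M = D$, contradicting either the minimality of $\delta(x)$ (if $\delta(x) \geq 2$) or the hypothesis $x \notin D$ (if $\delta(x) = 1$). I expect the chief technical obstacle to be the second assertion---in particular, engineering $G$ to control the $\lambda$-part and the maximal finite submodule simultaneously; the first and third assertions follow rather cleanly from the UFD property of $\Lambda$ and the definition of $M$ as $\Z_p$-torsion, respectively.
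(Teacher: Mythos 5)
Your proof is correct and follows essentially the same route as the paper's: a pseudoisomorphism $\psi : A \ra \id{E}(A)$ with $\Ker \psi = F(A)$, the choice $G = F \cdot G_1$ with $G_1$ a distinguished annihilator of $F(A)$, the observation that distinguished polynomials act as nonzerodivisors on the $\mu$-part (so that $\psi$ is injective on $G A$ and orders equal essential orders), and the identical telescoping argument $p\bigl(p^{\delta(x)-1}x - c'\bigr) = z \in M \Rightarrow p^{\delta(x)-1}x \in D$ for the last claim. The one step you leave implicit --- that the minimal annihilator $F$ of $L$ kills all of $\id{E}(A)_{\lambda}$ (which follows since $\psi(L)$ has finite index in $\id{E}(A)_{\lambda}$ and the latter is $\Z_p$-free) --- is at the same level of detail as the paper's own treatment, so no genuine gap remains.
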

\begin{proof}
  We fix a pseudoisomorphism $\psi : M \ra \id{E}(M)$.  By definition
  of the essential order, $p^l = \eord(\psi(x))$ and if $g(T) x = 0$
  then $\psi(g(T) x) = g(T) \psi(x) = 0$ thus $g(T) \equiv 0 \bmod
  \eord(x)$, as claimed.

  We show the existence of $G(T)$ in two steps. First, if $F(T)$ is a
  distinguished polynomial that annihilates $L$, then $F(T) A \subset
  M$.  If $g(T)$ is a distinguished polynomial that annihilates
  $\Ker(\psi) = F(A)$, then the restriction $\psi : g(T) M
  \hookrightarrow \id{E}(M)$ is injective, so $\ord(y) = \eord(y)$ for
  all $y \in g M$.  We may thus choose $G(T) = F(T) g(T)$ as a
  polynomial satisfying the second claim of the lemma.

  Let now $x \in A \setminus D$ and assume the final claim is false,
  so $p^{\delta(x)} x = c + u = p \gamma + u, \gamma \in L$. Let $y :=
  p^{\delta(x)-1} x$. Then $p (y-\gamma) = u \in M$ and it follows
  that $u' := y - \gamma \in M$ too. But then $p^{\delta(x)-1} x = y =
  \gamma + u' \in D$, in contradiction with the minimality of
  $\delta$. The hypothesis $c \in p L$ was thus false, which completes
  the proof of the lemma.
\end{proof}

We introduce the distances $d_n : A \times A \ra \N$ as follows: let
$x, z \in A$; then
\[ d_n(x, z) := \prk(\Lambda (x_n - z_n)); \quad d_n(x) = \prk(\Lambda
x_n) . \] We obviously have $d_n(x,z) \leq d_n(x,y)+d_n(x,z)$ and
$d_n(x) \geq 0$ with $d_n(x) = 0, \forall n > 0$, for the trivial
module. Also, if $f \in \Z_p[ T ]$ is some distinguished polynomial of
degree $\phi = \deg(f)$, then $d_n(x)-\phi \leq d_n(f x) \leq d_n(x)$
for all $x \in A$. We shall write $d(x, y) = \lim_n d_n(x,y)$. We may
also write $d(u,v) = \prk(\Lambda (u-v))$ if $u, v \in A_k$, but we
know no lifts of $u$ and/or $v$ to $A$: the difference consists here
in the fact, that $u, v$ appear as individual elements of $A_k$,
rather then elements of a given norm coherent sequence.  The simplest
fact about the distance is
\begin{fact}
\label{dlam}
Let $x, z \in A$ be such that $d_n(x, z) \leq N$ for some fixed bound
$N$ and all $n > 0$. Then $x-z \in L$ and $N \leq \ell := \prk(L)$.
For every fixed $d \geq 2 \prk(L)$ there is an integer $n_0(d)$ such
that for any $x \in A \setminus L$ and $n > n_0$, if $d_n(x) \leq d$
then $x \in \nu_{n,n_0} A + F$.
\end{fact}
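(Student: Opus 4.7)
The plan is to reduce both statements to structural analyses on the elementary module $\id{E}(A) = \id{E}_{\lambda} \oplus \id{E}_{\mu}$ via a pseudoisomorphism $\psi : A \to \id{E}(A)$ whose kernel (contained in $F$) and cokernel are finite, and therefore contribute only bounded error to $p$-rank computations at each level.

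For the first statement, set $y = x-z$ and decompose $\psi(y) = y_{\lambda}+y_{\mu}$. If $y_{\mu}$ were nonzero in some cyclic summand $\Lambda/(p^{e_i})$, writing the component as $p^{a_i}u_i$ with $u_i$ coprime to $p$ and using that $\Lambda$ is a domain gives $\Lambda y_{\mu}^{(i)} \cong \Lambda/(p^{e_i-a_i})$, whose image modulo $\omega_n$ has $p$-rank $\deg(\omega_n) = p^{n-k}$. Modulo the bounded pseudoisomorphism discrepancy this would force $\prk(\Lambda y_n) \to \infty$, contradicting the hypothesis. Hence $\psi(y)\in\id{E}_{\lambda}$, so $\Lambda y$ has finite $p$-rank and $y\in L$. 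The inequality $N\leq \ell$ follows since $\Lambda y_n$ is a quotient of $\Lambda y \subset L$, so $\prk(\Lambda y_n) \leq \prk(L) = \ell$.

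For the second statement, fix $d \geq 2\ell$ and choose $n_0 = n_0(d)$ so large that (i) the finite kernel and cokernel of $\psi$ contribute an error $\leq \ell$ at every level $n>n_0$; (ii) each distinguished polynomial $f_j$ appearing in $\id{E}_{\lambda}$ is coprime to $\nu_{n,n_0}$ for $n > n_0$, which holds once $n_0$ exceeds the finitely many $p$-power levels at which a root of some $f_j$ has the form $\zeta-1$ with $\zeta$ a $p$-power root of unity; (iii) $n_0 \geq \max_i e_i$. Under (ii), $\nu_{n,n_0}$ acts as a unit on $\id{E}_{\lambda}$, so in the decomposition $\psi(x) = x_{\lambda}+x_{\mu}$ the $\lambda$-piece automatically lies in $\nu_{n,n_0}\id{E}_{\lambda}$; since $x \notin L$, one has $x_{\mu} \neq 0$, and the task reduces to proving $x_{\mu}\in \nu_{n,n_0}\id{E}_{\mu}$.

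The key structural observation, exploiting $\omega_n = \omega_{n_0}\nu_{n,n_0}$ and the fact that $\Lambda$ is a domain, is that the principal ideal $(\nu_{n,n_0})$ inside $\Lambda/(p^{e_i},\omega_n)$ is isomorphic to $\Lambda/(p^{e_i},\omega_{n_0})$, whose $p$-rank is the fixed quantity $\deg(\omega_{n_0})$. Conversely, using the binomial expansion $\nu_{n,n_0} = p^{n-n_0} + \binom{p^{n-n_0}}{2}\omega_{n_0} + \cdots$, one shows that for $n > n_0$ any cyclic submodule of $\Lambda/(p^{e_i},\omega_n)$ generated by an element outside $(\nu_{n,n_0})$ contributes $p$-rank growing like $\deg(\omega_n)/\deg(\omega_{n_0})$, which exceeds $d$ for $n$ large. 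Given $\prk(\Lambda x_n) \leq d$, this forces each component of $x_{\mu}$ into the corresponding $(\nu_{n,n_0})$, and lifting back through $\psi$ yields $x \in \nu_{n,n_0}A+F$. The margin $d \geq 2\ell$ enters to absorb the two unavoidable contributions of size $\ell$ — from the $\lambda$-part of $\psi(x)$ and from the pseudoisomorphism error — so that the residual $\mu$-contribution remains tightly enough constrained for the divisibility conclusion. The main obstacle is precisely this converse estimate: establishing that bounded $p$-rank forces $\nu_{n,n_0}$-divisibility rather than merely $p^a$-divisibility for some $a$.
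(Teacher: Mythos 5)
Your handling of the first claim (via the elementary module) is essentially sound and matches the paper's brief argument in spirit. The second claim is where your proposal has genuine gaps. First, the assertion that $\nu_{n,n_0}$ ``acts as a unit'' on $\id{E}_{\lambda}$ because it is coprime to the $f_j$ is false: coprimality only makes multiplication by $\nu_{n,n_0}$ injective with \emph{finite} cokernel, and the cokernel $\bigoplus_j \Lambda/(f_j^{e_j},\nu_{n,n_0})$ is in general nonzero and grows without bound in $n$, since $v_p(\nu_{n,n_0}(\xi))$ increases roughly like $n-n_0$ at every root $\xi$ of $f_j$. So the $\lambda$-piece of $\psi(x)$ is not automatically divisible by $\nu_{n,n_0}$, and the discrepancy cannot be absorbed by the finite module $F$. (The paper sidesteps the $\lambda$-part altogether by first replacing $x$ with $F(T)x \in M$.) Second, your quantitative claim for the $\mu$-components is wrong: in $\Lambda/(p^{e_i},\omega_n)$ an element outside $(\nu_{n,n_0})$ need only generate a submodule of $p$-rank exceeding $\deg \omega_{n_0}$ --- for $e_i=1$ the class of $T^{\deg\omega_n-\deg\omega_{n_0}-1}$ generates rank exactly $\deg\omega_{n_0}+1$ --- and this bound does not grow with $n$; there is no growth of order $\deg(\omega_n)/\deg(\omega_{n_0})$. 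One could try to repair this by taking $n_0$ so large that $\deg\omega_{n_0}$ exceeds $d$ plus all error terms, but you yourself flag this converse estimate (bounded rank forces $\nu_{n,n_0}$-divisibility rather than mere $p$-power divisibility) as ``the main obstacle'': it is precisely the content of the Fact and is not proved in your sketch. Third, even granting $\psi(x)\in\nu_{n,n_0}\id{E}(A)$, the descent to $x\in\nu_{n,n_0}A+F$ is missing: finiteness of the cokernel of $\psi$ only yields $p^c x\in\nu_{n,n_0}A+F$ for some $c>0$, not the statement itself.

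The paper avoids all of this by a different mechanism, and the comparison shows what your route lacks. From $d_n(x)\le d$ it extracts a distinguished annihilator $h$ of $x_n$ of degree about $d$; the Babaicev--Monsky bound $p^{B}$ on the exponent of $M$ makes the set of relevant $h$ finite modulo $p^{B}$; a $p$-adic estimate at the roots of $h$ (Euclidean division of $\nu_{m,1}$ by $h$ and the valuation of the remainder) shows one may choose $n_0$, depending only on $\K$ and $d$, with $\nu_{n_0,1}\in(h,p^{B})$ for every such $h$, whence $\nu_{n_0,1}x_n=0$; Iwasawa's Theorem 6, i.e.\ the presentation $A_n\cong X/Y_n$ with $Y_n=\nu_{n,1}(Y+TX)$, then converts this level-$n$ vanishing into a global relation $\nu_{n_0,1}x=\nu_{n,1}z$; finally Sands' lemma (coprimality of the annihilator of $L$ with $\nu_{n,1}$) identifies the residual term $x-\nu_{n,n_0}z$ as an element of $F$. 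The uniform exponent bound, Theorem 6, and Sands' lemma are exactly the tools your proposal does not invoke, and they are what allows the paper to bypass the divisibility estimate you acknowledge as unresolved.
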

\begin{proof}
  The element $y = x-z$ generates modules of bounded rank, so it is
  neither of $\mu$-type nor indecomposed. Thus $y \in L$ and
  consequently $d_n(y) \leq \prk(L_n) \leq \ell$ for all $n$, which
  confirms the claim.
  
  For the second claim, note that if $x \not \in L$, then $d_n(x) \ra
  \infty$, so the boundedness of $d_n(x)$ becomes a strong constraint
  for large $n$. Next we recall that $F(T) x \in M$ and since
  $d_n(F(T) x) \leq d_n(x)$, we may assume that $x \in M$. Now $d_n(x)
  \leq d$ implies the existence of some distinguished polynomial $h
  \in \Z_p[ T ]$ with $\deg(h) = d$ and such that $h(T) x_n = 0$. The
  exponent of $M$ is uniformly bounded by $p^{B}$, as a consequence of
  the Theorem of Babaicev -- Monsky, so there is a finite set $\id{H}
  \subset \Z_p[ T ]$ from which $h$ can take its values. Let now $n_0$
  be chosen such that $\nu_{n_0,1} \in (h(T), p^{B})$ for all $h \in
  \id{H}$. Since $m$ depends on $\K$, it follows that $n_0$ only
  depends on $\K$ too. Then $h(T) x_n = 0$ implies $\nu_{n_0,1} (x_n)
  = 0$ and thus, by Iwasawa's Theorem 6 (see also \rf{an} and Lemma
  \ref{ydef} below) there is a $z \in A$ such that $\nu_{n_0,1} (x) =
  \nu_{n,1} (z)$.  Consequently $\nu_{n_0,1}( x - \nu_{n,n_0} z) = 0$
  and thus $x = \nu_{n,n_0}( z ) + y$ for some $y$ and the claim
  follows if we show that $y := x - \nu_{n,n_0}(z) \in F$. Since
  $\nu_{n_0,1} y = 0$, it follows that $y \in L$ and the result of
  Sands in Lemma \ref{sa} in the Appendix implies that $\Lambda y$
  must be finite, so $y \in F$, as claimed.

  We now show that for fixed $h \in \id{H}$ there is some even $m$
  such that $\nu_{m,1} \in (h, p^{ \mu})$. Let the Euclidean division
  yield
  \[ \nu_{m,1} = q_m(T) h(T) + r_m(T), \] and let $\xi_i \in
  \overline{\Q}_p$ be the zeroes of $h$. Then
  \[ r_m(\xi_i) = \nu_{m,1}(\xi_i) = \frac{(1+\xi_i)^{p^m}-1}{\xi_i} =
  O(p^{m/2}) + O(\xi_i^{p^{m/2}}). \] Since $\id{H}$ is finite, there
  is some lower bound $\delta$ with $v_p(\xi_i) \geq \delta$ and the
  above identity shows that $v_p(r_m) \ra \infty$ with diverging $m$,
  so one can choose $m$ sufficiently large, such that $v_p(r_m) > \mu$
  and thus $r_m \equiv 0 \bmod p^{\mu}$, so $\nu_{m,1} \in (h(T),
  p^{\mu})$. This can be achieved for all $h \in \id{H}(T)$ and the
  claim is satisfied by choosing $n_0 = \max_{h \in \id{H}} (m(h))$ .
  \end{proof}

  The arguments of this chapter will take repeatedly advantage of the
  following elementary Lemma\footnote{I owe the proof of the Lemma to
    Cornelius Greither, who provided an elegant simplification of my
    original proof.}:
\begin{lemma}
\label{ab}
Let $A$ and $B$ be finitely generated abelian $p-$groups denoted
additively, and let $N: B\ra A$, $\iota:A\ra B$ be two $\Z_p$ - linear
maps such that:
\begin{itemize}
\item[1.] $N$ is surjective.
\item[2.] The $p-$ranks of $A$ and $B$ are both equal to $r$ and $| B
  |/| A | = p^r$.
\item[3.]  $N(\iota(a))=p a,\forall a\in A$.
\end{itemize}
Then
\begin{itemize}
\item[ A. ] The inclusion $\iota(A) \subset p B$ holds
  unconditionally.
\item[ B. ] Suppose that $\sexp(A) > p$. Then $\prk(A) =
  \prk(\iota(A))$(i.e. $\iota$ is \textit{rank-preserving}) and
  $\iota(A) = p B$, while $B[ p ] = \Ker(N) \subset
  \iota(A)$. Moreover, $\ord(x) = p \cdot \ord( \iota(N x) )$ for all $x \in
  B$.
\item[ C. ] If $\sexp(A) > p$ and there is a group homomorphism $T : B
  \ra B$ with $\iota(A) \subseteq \Ker(T)$ and $\nu := \iota \circ N =
  p + \binom{p}{2} T + O(T^2)$, then $\nu = \cdot p$, i.e. $\iota
  (N(x)) = p x$ for all $x \in B$.
\end{itemize}
\end{lemma}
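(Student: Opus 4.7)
The plan is to prove A, B, C in this order, each leveraging its predecessor. For A, I would reduce mod $p$: since $\prk(A) = \prk(B) = r$, both $A/pA$ and $B/pB$ have order $p^r$, and surjectivity of $N$ forces the induced map $\bar N \colon B/pB \to A/pA$ to be an isomorphism. The relation $N\iota = p \cdot \mathrm{id}_A$ reduces mod $p$ to $\bar N \bar\iota = 0$, so $\bar\iota = 0$ and $\iota(A) \subseteq pB$.

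Part B is the crux. Set $K = \Ker N$; from $|K| = p^r$ and $\prk(B) = r$ one has $|pB| = |A|$, and a short order count shows $K \subseteq pB$ (since $|K \cap pB| = |pB|/|pA| = p^r = |K|$). Applying the snake lemma to $0 \to K \to B \xrightarrow{N} A \to 0$ under multiplication by $p$ yields
\[ 0 \to K[p] \to B[p] \to A[p] \xrightarrow{\delta} K/pK \to B/pB \to A/pA \to 0 ; \]
the inclusion $K \subseteq pB$ makes the map $K/pK \to B/pB$ zero, so $\delta$ is surjective. The hypothesis $\sexp(A) > p$, equivalent to $A[p] \subseteq pA$, lets me compute $\delta$ explicitly: given $a \in A[p]$, write $a = pa'$; then $\iota(a') \in B$ lifts $a$ under $N$, so $\delta(a) = p\iota(a') \bmod pK = \iota(a) \bmod pK$. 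Surjectivity of $\delta$ now reads $\iota(A[p]) + pK = K$, and Nakayama's lemma for finite $p$-modules yields $\iota(A[p]) = K$. As a quotient of $A[p] \cong (\Z/p)^r$, the group $K$ is elementary of order $p^r = |B[p]|$, hence $K = B[p]$. Then $\iota|_{A[p]}$ is a surjection between equi-order groups, hence a bijection, which forces $\Ker \iota = 0$ (since $\Ker \iota \subseteq A[p]$). The inclusion $\iota(A) \subseteq pB$ between equi-order subgroups then sharpens to equality, and the rank-preservation together with $B[p] \subseteq \iota(A)$ drop out. The order formula is immediate from $\iota N(x) - px \in K = B[p]$: multiplying by $p$ gives $p \cdot \iota N(x) = p^2 x$, and iterating yields $\ord(x) = p \cdot \ord(\iota N(x))$.

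For C, I would combine B with the new hypotheses: $\iota(A) = pB \subseteq \Ker T$ forces $pT = 0$, so $T(B) \subseteq B[p]$; and then $B[p] = K \subseteq pB = \iota(A) \subseteq \Ker T$ gives $T^2 = 0$, killing the $O(T^2)$ tail of $\nu$. Since $p$ is odd, $\binom{p}{2} = p \cdot (p-1)/2$ with $(p-1)/2 \in \Z_p^{\times}$, so $\binom{p}{2} T = \bigl((p-1)/2\bigr) \cdot (pT) = 0$. Only the leading term survives, giving $\nu = p \cdot \mathrm{id}_B$.

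The delicate step I expect is the identification $\delta = \iota \bmod pK$ on $A[p]$: this is where the hypothesis $\sexp(A) > p$ enters essentially, by providing the lift $a = pa'$ that lets $\iota(a')$ serve as a section of $N$ above $a$. Without it, the surjectivity of $\delta$ would give no information about the image of $\iota$, and the Nakayama argument --- together with $K = B[p]$, which is needed again in C --- would collapse.
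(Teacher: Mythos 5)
Your argument is correct, and parts A and C coincide with the paper's own proof (reduction mod $p$ for A; the chain $pT=0$, $T(B)\subset B[p]\subset \iota(A)$, $T^2=0$ for C). Part B, however, follows a genuinely different route. The paper studies the induced map $\iota' : A/pA \ra pB/p^2B$, uses $\sexp(A)>p$ to identify $\overline{N}\circ\iota'$ with the isomorphism $\cdot p : A/pA \ra pA/p^2A$, deduces $\iota(A)=pB$ from this plus an order count, and only then obtains $\Ker(N)=B[p]\subset\iota(A)$ by passing to Pontrjagin duals. You instead run the snake lemma on $0\ra K\ra B\ra A\ra 0$ with multiplication by $p$, note the unconditional inclusion $K\subset pB$ (an order count the paper never makes explicit), compute the connecting map as $\delta(a)=\iota(a)\bmod pK$ --- which is precisely where $\sexp(A)>p$ enters, through the lift $a=pa'$ --- and then Nakayama gives $K=\iota(A[p])=B[p]$ directly, after which injectivity of $\iota$, $\iota(A)=pB$, rank preservation and $B[p]\subset\iota(A)$ all fall out at once. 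Your route buys the kernel statement without any dualization and makes the role of the subexponent hypothesis more transparent; the paper's route reaches $\iota(A)=pB$ a bit faster but pays with the duality step. One spot to tighten: for the order formula, the relation $p\,\iota(N x)=p^2x$ by itself does not exclude $\iota(Nx)=0$ when $\ord(x)=p^2$; add that $\iota(Nx)=0$ would force $Nx=0$ by injectivity of $\iota$, hence $x\in\Ker(N)=B[p]$, a contradiction (and for $\ord(x)\le p$ use $B[p]=\Ker N$ directly). Both facts are already in your proof, so this is a one-line repair; note that the formula is in any case vacuous at $x=0$, an edge case the paper's proof also passes over by restricting to $x\in B\setminus pB$.
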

\begin{proof}
  Since $A$ and $B$ have the same $p$-rank and $N$ is surjective, we
  know that the map $\overline{N} : B/p B \ra A/p A$ is an
  isomorphism\footnote{For finite abelian $p$-groups $X$ we denote
    $R(X) = X/pX$ by \textit{roof} of $X$ and $S(X) = X[ p ]$ is its
    \textit{socle}.}.  Therefore, the map induced by $N \iota$ on the
  roof is trivial. Hence $\overline{\iota} : A/p A \ra B/p B$ is also
  zero and thus $\iota(A) \subset p B$. This confirms the claim A.

  We now consider the map $\iota' : A/p A \ra p B/ p^2 B$ together
  with $\overline{N}$. From the hypotheses we know that $N \iota'$ is
  the multiplication by $p$ isomorphism: $\cdot p : A/ p A \ra p A/p^2
  A$, using the fact that $\sexp(A) > p$ which implies that $\prk(A) =
  \prk(\iota(A))$. It follows that $\iota'$ is an isomorphism of
  $\F_p$-vector spaces and hence $\iota : A \ra p B$ is surjective.
  From $| B | = p^r | A | = p^r | p B |$ we see that $| A | = | p B
  |$ and thus $\iota : A \ra p B$ is an isomorphism; it is in
  particular rank preserving. The cokernel of $\iota$ is an
  $\F_p$-vector space of dimension $r$.

  Taking Pontrjagin duals, the roles of $N$ and $\iota$ are
  interchanged. Hence the statement about cokernel of $\iota$ implies
  that the kernel of $N$ is also annihilated by $p$ and has order
  $p^r$; it thus coincides with $B[ p ]$ and since $\iota$ is rank
  preserving, it follows that $B[ p ] = \iota(A)[ p ] \subset
  \iota(A)$. Let now $x \in B \setminus p B$ have order $q p$ and let
  $r = \ord(\iota(N x))$. Then $N(r x) = r N(x) = 0$, so $r x \in
  \Ker(N) = B[ p ]$ and $r p x = 0$, thus $q | r$. Conversely, $q x
  \in B[ p ] = \Ker(N)$, so $\iota(q N(x)) = \iota(N(q x)) = 0$,
  implying $r | q$. Therefore $q = r = \ord(\iota(N x)) = \ord(x)/p$,
  which completes the proof of point B.

  For point C. we let $x \in B$, so $p x \in p B = \iota(A)$ and thus
  $T p x = p T x = 0$. Consequently $T x \in B[ p ] \subset \iota(A)$
  and therefore $T^2 x = 0$. From the definition of $\nu = p + Tp
  \frac{p-1}{2} + O(T^2)$ we conclude that $\nu x = p x +
  \frac{p-1}{2} T p x + O(T^2) x = p x$, which confirms the claim
  C. and completes the proof.
\end{proof}

\subsection{Kummer extensions, Property F and stabilization}
Iwasawa has proved in his classical Theorem 6 from \cite{Iw} a
property relating ramification to the first cohomology of the groups
$A(\KL_n)$.  We review here his construction, which shall be
generalized for our context; we refer the reader either to the
original paper \cite{Iw}, or the Lemmata 13.14-13.16 in \cite{Wa}.
Let $\{ \eu{P}_i \ : \ i = 1, 2, \ldots, s \} \subset \KH$ be a set of
primes above the unique primes $\wp_i \subset \K; i = 1, 2, \ldots, s$
above $p$, which ramify completely in $\KL/\K$. Since $\KH/\KL$ is
unramified, it follows that the inertia groups $I(\eu{P}_i) \subset
\Gal(\KH/\K) \cong \Gamma$ are all isomorphic to $\Z_p$ and one can
choose topological generators $\tau_i \in I(\eu{P}_i)$ which restrict
to a fixed topological generator $\tau = \tau_i \vert_{\KL} \in
\Gal(\KL/\K)$. Following Iwasawa \cite{Iw}, we let $a_i \in A$ be such
that $\tau_i = \varphi(a_i) \tau_1, i = 2, 3, \ldots, s$ and identify
a lift of $\tau$ to $\Gal(\KH/\K)$ with $\tau_1$. Let $Y =
\bigoplus_{i=2}^s \Z_p \varphi(a_i) \subset X$ and $Y_n = \omega_n X +
\nu_{n,1} Y = \nu_{n,1} ( Y + T X)$. Then Iwasawa's Theorem 6 in
\cite{Iw} states that
\begin{eqnarray}
\label{an}
A_n \cong X/Y_n, \quad Y_n = \omega_n X + \nu_{n,1} Y = \nu_{n,1} ( Y + T X ) 
\quad \forall n, 
\end{eqnarray}
and thus $a \in A$ has $a_n = 0$ iff $a \in \varphi^{-1}(Y_n)$. In
view of the Lemma \ref{sa} in the Appendix, one verifies that \rf{an}
is equivalent to
\begin{eqnarray}
\label{hiwas}
 H^1(\Gamma \vert_{\KL_n}, A_n) \cong Y_n/\omega_n X \cong Y/(Y \cap T X), 
 \end{eqnarray}
 the last isomorphism holding only for large enough $n$. If $Y \subset
 T X$, or, equivalently, $H^1(\Gamma \vert_{\KL_n}, A_n) = 0$ for all
 $n > 1$, we say that $A(\KL)$ has \textit{Property F}\footnote{The
   name recalls Furtw\"angler, who first noted this property in a
   slightly different context of class field theory.}, or simply that
 $\LK/\K$ has this property.

We retain the above facts for future reference:
\begin{lemma}
\label{ydef}
Let $\KL/\K$ be a $\Z_p$-extension in which all the primes above $p$
ramify completely, let $\Lambda$ be the associated Iwasawa algebra and
$\Gamma = \Gal(\KL/\K), X = \Gal(\KH/\KL)$. There is a finitely
generated $\Z_p$-module $Y \subset X$ such that \rf{an} and \rf{hiwas}
hold for every $n > 0$.  Moreover, $Y \not \subset T X$ iff there is
some $y \in A \setminus T A$ with $y_1 = 0$.
\end{lemma}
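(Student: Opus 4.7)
The first assertion---existence of a finitely generated $\Z_p$-submodule $Y \subset X$ making \rf{an} and \rf{hiwas} hold---is nothing but Iwasawa's Theorem 6, whose construction is already summarized in the paragraphs preceding the lemma. My plan is simply to invoke it: one lifts the inertia generators $\tau_i \in I(\eu{P}_i)$ so that they share a common restriction $\tau \in \Gamma$, writes $\tau_i = \varphi(a_i)\tau_1$ with $a_i \in A$, sets $Y = \bigoplus_{i=2}^s \Z_p\,\varphi(a_i)$, and reads off $Y_n = \omega_n X + \nu_{n,1} Y = \nu_{n,1}(Y+TX)$. The cohomological reformulation \rf{hiwas} follows from cyclic group cohomology applied to $X/Y_n$.

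The new content is the ``moreover'' equivalence, and my strategy is to transport the whole question to $X$ along the Artin isomorphism $\varphi : A \xrightarrow{\sim} X$, which is $\Lambda$-linear by $\Gamma$-equivariance. With the indexing in force, where $\K$ is the base field and $\omega_1 = T$, $\nu_{1,1}=1$, the formula for $Y_n$ at $n=1$ collapses to $Y_1 = TX + Y$. Then \rf{an} reads $y_1 = 0 \iff \varphi(y) \in TX + Y$, while $\Lambda$-linearity of $\varphi$ yields $y \in TA \iff \varphi(y) \in TX$.

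From these two translations the equivalence is immediate. If $Y \subset TX$, then $TX + Y = TX$, so $y_1 = 0$ forces $\varphi(y) \in TX$ and hence $y \in TA$; no witness can exist. Conversely, if $Y \not\subset TX$, any $\eta \in Y \setminus TX$ supplies the witness $y := \varphi^{-1}(\eta)$, since $\eta \in Y \subset Y_1$ gives $y_1 = 0$, while $\eta \notin TX$ gives $y \notin TA$.

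There is no real obstacle in this argument: Iwasawa's theorem does all the work upstream, and the final equivalence is a one-line observation about the relation between $Y$, $TX$, and $Y + TX$ as submodules of $X$. The only point demanding care is bookkeeping, namely checking that with the chosen indexing one really has $Y_1 = TX + Y$, so that the comparison takes place between $Y$ and $TX$ directly, and not between $Y$ and some twisted submodule like $\nu_{n,1} TX$.
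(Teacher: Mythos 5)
Your proposal is correct and follows exactly the route the paper intends: the paper offers no separate proof of this lemma (it is stated as a recap of the Iwasawa Theorem~6 discussion), and the ``moreover'' clause is precisely the observation $a_n = 0 \iff \varphi(a) \in Y_n$ specialized to $n=1$, where $Y_1 = TX + Y$, combined with $\Lambda$-linearity of the Artin isomorphism. Your bookkeeping check that $\nu_{1,1}=1$ and $\omega_1 = T$ under the paper's indexing is the right point to verify, and it matches the displayed formula \rf{an}.
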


We shall be concerned with various phenomena of module stabilization,
for which we start by introducing
\begin{definition}
\label{stab}
Let $\KL \subset \F \subset \KH$ be a galois extension of $\K$, let
the intermediate fields be $\F_n = \F \cap \KH_n, \overline{\F}_n =
\F_n \cdot \KL$, let $X_n = \Gal(\overline{\F}_n/\KL)$ and $X =
\varprojlim_{n \geq 0} X_n$. Let $F = F(X), L = L(X), M = M(X)$ be the
modules in Definition \ref{type}, associated to $X$.

If $F' \subset F$, we say that $F'_n$ is \textit{stable}, if $F'_m
\cong F'_n \cong F'$ for all $m \geq n$.  If $L' \subset L$ then
$L'_n$ is \textit{stable} if $\prk(L'_n) = \prk(L'_m) = \prk(L')$.
Let $Y_n := H^0(\Gal(\KL_n/\K), A_n)$. We say that the $H$-part is
stable (for $m > n$) if
\[ Y_n \cong Y_{n-1} \cong Y(\F)/T X \quad \hbox{ for all $m > n$.}
\]

The smallest integer $v > 0$ such that $x_v \neq 0$ for all $x \in
A(\KL) \setminus \eu{M} A(\KL)$ is called the {\em visibility index};
more general, if $C \subset A(\KL)$ and $I \subset \Lambda$ is an
ideal, the visibility index of $C$ with respect to $I$ is $v := \min_k
\{ k \ : \ x_k \neq 0, \ \forall x \in C \setminus I C \}$.

The least integer $n_0$ for which $F, L, H$ and $M$ are stable is the
stabilization index of $X$. It will be useful to assume that the
stabilization index additionally fulfills the condition $x_0 \neq 0$
for all $x \in X \setminus I X$. Unless otherwise specified, the ideal
$I = \eu{M}$.
\end{definition}
Stabilization criteria for the module $A$ were first given by Fukuda
\cite{Fu}, in the case when $\mu(\KL) = 0$. S. Kleine has studied in
his Thesis a large spectrum of stabilization conditions in multiple
$\Lambda$-extensions.  The result we present here is a variant of the
statements proved by him.
\begin{proposition}
\label{fuk}
Let $\KL/\K$ be a $\Z_p$-extension in which the primes above $p$ are
totally ramified and let $\KL \subset \F \subset \KH$ be a galois
extension of $\K$ with group $X = \Gal(\F/\KL)$.  Then
\begin{itemize}
\item[ 1. ] If $X_n \cong X_{n+1} \neq 0$ for some $n > 0$, then $X_n \cong
  X$ and $X$ is finite.
\item[ 2. ] If $\prk(X_n) = \prk(X_{n+1}) > 0$ for some $n > 0$, then
  $\prk(X_n) = \prk(X)$ and $\mu(X) = 0$.
\item[ 3. ] Let $V_n := X_n/ T X_n$; if $V_n \cong V_{n+1} \neq 0$ then $V_n
  \cong X/T X$ and $Y_n/ T X_n \cong Y/ T X$, the $H$ - part being
  stable for $m > n$ and $X[ T ]$ finite.
\end{itemize}
\end{proposition}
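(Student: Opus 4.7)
The unifying idea is to apply Iwasawa's Theorem 6 (Lemma \ref{ydef}) to the subextension $\F/\KL$, producing a submodule $Y \subset X$ and the identification
\[ X_n \;\cong\; X/Y_n, \qquad Y_n \;=\; \omega_n X + \nu_{n,1} Y. \]
The structural relations $\omega_{n+1} = \nu_{n+1,n}\omega_n$ and $\nu_{n+1,1} = \nu_{n+1,n}\nu_{n,1}$ immediately yield the descent identity $Y_{n+1} = \nu_{n+1,n}\,Y_n$, so the entire chain $\{Y_n\}$ is controlled by successive multiplication by $\nu_{n+1,n}$. Two reductions of this element will be used repeatedly: working in $\F_p[[T]]$, the congruence $(1+T)^{p^n}\equiv 1+T^{p^n}\pmod{p}$ gives $\overline{\nu_{n+1,n}} = T^{(p-1)p^n}$, while evaluating at $T=0$ gives $\nu_{n+1,n} \equiv p \pmod{T}$. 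In each of the three parts the hypothesis will be rearranged to produce an equation $M = \nu_{n+1,n} M$ for some submodule $M$ in an appropriate quotient, and Nakayama's lemma will finish the job.

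For part~1, the canonical surjection $X_{n+1}\twoheadrightarrow X_n$ between finite groups of equal order is forced to be an isomorphism, hence $Y_n = Y_{n+1} = \nu_{n+1,n} Y_n$; since $\nu_{n+1,n} \in \eu{M}=(p,T)$ and $Y_n$ is finitely generated over the local Noetherian ring $\Lambda$, Nakayama gives $Y_n=0$ and therefore $X = X_n$ is finite.

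Parts~2 and~3 follow exactly the same template, now applied modulo $p$ and modulo $T$ respectively. The hypothesis $\prk X_n = \prk X_{n+1}$ forces $pX+Y_n = pX+Y_{n+1}$, and passing to $\bar X = X/pX$ gives $\bar Y_n = \overline{\nu_{n+1,n}}\,\bar Y_n = T^{(p-1)p^n}\bar Y_n$; Nakayama applied to the finitely generated $\F_p[[T]]$-module $\bar Y_n$ forces $Y_n \subseteq pX$. Hence $X/pX$ is finite of $\F_p$-dimension $\prk X_n$, which simultaneously gives $\mu(X)=0$ and $\prk X_n = \prk X$. For part~3, the inclusion $\omega_n X\subseteq TX$ rewrites $V_n = X/(TX + \nu_{n,1} Y)$; repeating the template on the image $W$ of $\nu_{n,1} Y$ in $X/TX$, the congruence $\nu_{n+1,n}\equiv p\pmod{T}$ produces $W = pW$, and Nakayama over the local ring $\Z_p$ gives $W=0$, i.e.\ $\nu_{n,1}Y\subseteq TX$. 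Then $V_n \cong X/TX$, and combining with \eqref{hiwas} yields the $H$-part stability $Y_n/TX_n \cong Y/TX$; finally, the finiteness of $X/TX$ implies, via the structure theorem for $\Lambda$-modules, that the associated elementary module has no $T$-primary summand, so $X[T]$ is finite.

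The main obstacle is bookkeeping rather than conceptual: one must verify that Iwasawa's Theorem 6 applies to the intermediate extension $\F$, with its own submodule $Y(\F)\subset X$ obtained as the image in $X$ of the corresponding module for $\KH$, and confirm that the total ramification hypothesis in $\KL/\K$ persists when one passes to $\F$. Once the $Y_n$ formula is secured for $X$, the three parts are clean, independent Nakayama arguments that differ only in the target ring ($\Lambda$, $\F_p[[T]]$, $\Z_p$) in which the final finiteness is extracted.
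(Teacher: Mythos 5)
Your proposal is correct and takes essentially the same route as the paper: the presentation $X_n \cong X/Y_n$ coming from Iwasawa's Theorem 6 (Lemma \ref{ydef}), the descent relation $Y_{n+1}=\nu_{n+1,n}Y_n$, and then a Nakayama argument in each of the three quotients (by $\eu{M}$, by $p$, by $T$). The only cosmetic deviations are that you deduce $\mu(X)=0$ directly from the finiteness of $X/pX$ instead of invoking Iwasawa's growth formula, and in part 3 you reduce modulo $T$ and apply Nakayama over $\Z_p$ (using $\nu_{n+1,n}\equiv p \bmod T$) where the paper runs a minimal-generator Nakayama argument over $\Lambda$; both variants are sound.
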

\begin{proof}
  Let $Y = Y(\F) \big \vert_{\F}$ and $Y_n = \nu_{n,1}( Y + TX)$.  We
  have proved that $X_n \cong X / Y_n$ and assume without restriction
  of generality that $1$ is the least integer $n$ for first
  stabilization in both cases 1. and 2. We have the following
  commutative diagram in which $X_n \rightarrow X_{1}$ is induced by
  the map $\nu_{n,1}$ while the horizontal isomorphisms are deduced
  from the definition of $Y_n$.
\begin{eqnarray}
\label{cd}
\begin{array}{c c c}
X_n & \cong & X/\nu_{n,1} Y \\
\downarrow & & \downarrow \\
X_{1} & \cong & X/Y.
\end{array}
\end{eqnarray}
For the first point we assume $| X_{2} | = | X_{1} |$. Then $X_{2}
\rightarrow X_{1}$ is an isomorphism; therefore $\nu_{2,1} Y =
Y$. Since $\eu{M} = (p, T) \subset \Lambda$ is the unique maximal
ideal and $\nu_{2,1} \in \eu{M}$, and since $Y$ is finitely generated
over $\Lambda$, it follows from Nakayama's lemma that $Y =
0$. Consequently, $X \cong X_{1}$ and $X_n \cong X_1 \cong X$ for all
$n \geq 1$. The condition $X_2 \cong X_1$ readily implies finiteness
of the $X$, which proves the assertion 1.

Suppose now that $\prk(X_2) = \prk(X_1)$. Then $X_2/ p X_2 \cong X_1/p
X_1$ and thus $X/(\nu_{2,1} Y + p X) \cong X/( Y+ p X)$ and $\nu_{2,1}
Y + p X \cong Y + p X$. Letting $Z = (Y + p X)/pX$, we have
\[\nu_{2,1} Z = (\nu_{2,1} Y + p X)/pX = (Y + p X)/pX = Z.\] 
By Nakayama's lemma, we conclude that $Z = 0$ and $Y \subset p
X$. Therefore, 
\begin{eqnarray*}
  \prk(X_n) & = & \prk(X/\nu_{n,1} Y) = \prk(X/(\nu_{n,1} Y +
  p X)) \\
  & = & \prk(X/pX) = \zprk(X), \quad \hbox{for all} \quad n \geq 0.  
\end{eqnarray*}
By Iwasawa's formula, for $n$ sufficiently large we have
\[ | X_n | = p^{\mu p^n + \lambda n + \nu} , \] and since the rank
stabilizes, we see that $\mu(X) = 0$ and $| X_{n+1} | / | X_n | \geq
p^{\lambda}$ with equality iff $F(X) = 0$. In this case too, $\mu(X) = 0$ is a consequence of the
stabilization condition. This proves assertion 2.

Finally the stabilization of the cohomology part is analogous to point
1. We have $V_n = X_n/T X_n = X/( T X + \nu_{n,1} Y)$. Let $W_n =
\nu_{n,1} Y$ so $TX + \nu_{n,1} Y_n = TX + W_n$ while $TX +
\nu_{n+1,1} Y = T X + \nu_{n+1,n} W_n$. In exact sequences
\begin{eqnarray}
\label{wn} 
\begin{array}{c c c c c c c c c}
  0 & \ra & T X + W_n & \ra & X & \ra & X /(T X + W_n) & \ra & 0  \\
  0 & \ra & T X + W_{n+1} & \ra & X & \ra & X /(T X + W_{n+1}) & \ra & 0, \\
\end{array}
\end{eqnarray}
the isomorphism $V_{n+1} = X /(T X + W_{n+1}) \cong X /(T X + W_{n}) =
V_n$ implies that $T X + W_n \cong T X + \nu_{n+1,n} W_n$. It follows
from Nakayama's Lemma that $W_n \subset T X$; indeed, the module $Z_n
:= T X + W_n$ is finitely generated, so let $t_1, t_2, \ldots, t_r \in
T X \setminus \eu{M} T X$ be a minimal set of generators of $T X$.
Assuming that $TX \neq Z_n$ there is a minimal set of generators $w_1,
w_2, \ldots, w_j \in W_n \setminus (\eu{M} Z_n + T X)$ such that $(W_n
+ TX)/T X = \sum_j \Lambda \overline{w}_j$. But since $T X + W_n = T X
+ \nu_{n+1,n} W_n$, we deduce that $ \sum_j \Lambda \overline{w}_j =
\nu_{n+1,n} \sum_j \Lambda \overline{w}_j$ and since $\nu_{n+1,n} \in
\eu{M}$ it follows that $(T X + W_n )/TX = 0$, and $\nu_{n,1} Y
\subset T X$. A fortiori $\nu_{m,1} Y \subset T X$ and thus $Z_m = X /
T X $ for all $m > n$. It follows in particular that $X / T X$ is
finite and since $| X / T X | = | X_n / T X_n | = | X_n[ T ] | $ for
sufficiently large $n$, if follows that $H^0(\Gal(\KL_n/\K), A_m)$ is
stable for $m > n$.
\end{proof}

The strength of this Fukuda-type result is that it shows that the
first stabilization occurring within the projective sequence of galois
groups $X_n$ readily implies global stabilization.

The stabilization conditions above require no a priori knowledge about
the shape of $X$. Moreover, if $H$ is stable, then all $x \in A
\setminus \eu{M} A$ are visible. It is however not possible to
determine stabilization of $\mu$- parts from internal data, as the
following example shows:
\begin{example}
\label{mug}
Let $\K = \Q[ \sqrt{-d} ]$ be an imaginary quadratic field with
trivial $p$-part of the class field and let $\K_{\infty}$ be its
cyclotomic $\Z_p$-extension. For $n > 0$ we consider a principal prime
ideal $\eu{q} = (\gamma_n) \subset \K_n$, which is totally split over
$\Q$ and also splits in $\K[ \zeta_p ]$.  If $q \in \N$ is the
rational prime above it, then $q \equiv 1 \bmod p$ and we let $\F
\subset \Q[ \zeta_q ]$ be the subfield of degree $p$ while $\KL = \K
\cdot \F$.

Then it can be shown (see next chapter), that there is an ideal
$\eu{R} \subset \KL_n = \K_n \cdot \F$ with class $r_n = [
\eu{R}/\overline{\eu{R}} ]$ and such that $\Norm_{\KL/\K}(r_n) = 1$,
while $\Lambda r_n \cong \Lambda/(p, \omega_n)$. Assuming that
$A^-(\KL) = \Lambda r$ for a norm coherent sequence containing $r_n$,
we see that $A^-(\KL)$ has $\mu$-like growth up to level $n$, but
since $\mu(\KL) = 0$ by the Theorem of Ferrero-Washington, the
$p$-rank of $\Lambda r_m$ must stabilize for some $m > n$. This fact
cannot be detected by analyzing the sequence $r_1, r_2, \ldots, r_n$.
\end{example}
Of course, rank stabilization eventually takes place in this example,
so it can be detected by Proposition \ref{fuk}. Therefore it would be
interesting to know whether, in the case when $\mu > 0$, the rank
stabilization of some submodule can be perceived. A partial answer is
contained in Proposition \ref{fuk}, which allows choosing subfields of
the Hilbert class field -- so the question is transformed into one of
constructing an adequate subfield.

We now give some applications of the Fukuda result.  We keep the same
notation for $\K \subset \KL \subset \F \subset \KH' \subset \KH$,
with $\KH'$ being the maximal subextension of $\KH$ which splits all
the primes above $p$.

We let $\KH^{(l)} = \KH^{\varphi(M)}$, where $M = A^{\circ}$ 
and $\KH^{(t)} = \KH^{T \varphi(A)}$, the indicator
for stabilization of $H$-parts.  The galois groups are
\[ X = \Gal(\KH/\KL), \quad X^{(x)} = \Gal(\KH^{(x)}/\KL), \quad x \in
\{ l, t \}.
\]
The Proposition \ref{fuk} can be applied to these extensions in order
to establish the stabilization index $n_0$ of $\KL$.  As a direct
consequence we have
\begin{fact}
\label{lgrow}
Let $x \in X^{(l)}$; for $n$ beyond the stabilization index $n_l$ of
$X^{(l)}$ and for all $k > 0$, we have $\iota_{n,n+k}(x_n) = p^k
x_{n+k}$.
\end{fact}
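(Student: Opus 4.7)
The plan is to apply Lemma~\ref{ab}, part C, to each successive step $n \to n+1$ of the tower, and then iterate $k$ times. Since $\KH^{(l)}$ is defined by $X^{(l)} = X/\varphi(M)$, the module $X^{(l)}$ contains no $\mu$-type elements; in particular $\mu(X^{(l)}) = 0$. By Proposition~\ref{fuk}~(2), for $n \geq n_l$ the $p$-ranks $\prk(X^{(l)}_n)$ are constant, equal to some $\lambda := \prk(X^{(l)})$, and Iwasawa's formula (with $\mu = 0$) then yields $|X^{(l)}_{n+1}|/|X^{(l)}_n| = p^\lambda$.

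Fix $n \geq n_l$ and set $A := X^{(l)}_n$, $B := X^{(l)}_{n+1}$, $\iota := \iota_{n,n+1}$, $N := \Norm_{\KL_{n+1}/\KL_n}$. Condition~1 of the Lemma holds by surjectivity of the norm for totally ramified $\Z_p$-extensions; condition~2 is the stabilization noted above; condition~3 is the standard identity $N \circ \iota = \cdot\, p$. To invoke part~C, let the required endomorphism of $B$ be multiplication by $\omega_n = \tau^{p^n} - 1$. Ideal classes lifted from $\KL_n$ are fixed by $\Gal(\KL_{n+1}/\KL_n) = \langle \tau^{p^n}\vert_{\KL_{n+1}}\rangle$, hence $\iota(A) \subseteq \Ker(\omega_n\vert_B)$. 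The binomial expansion gives
\[
\nu_{n+1,n} \; = \; \frac{(1+\omega_n)^p - 1}{\omega_n} \; = \; p + \binom{p}{2}\omega_n + \binom{p}{3}\omega_n^2 + \cdots + \omega_n^{p-1},
\]
so $\nu = p + \binom{p}{2}\omega_n + O(\omega_n^2)$, which is exactly the form required in part~C with $T \mapsto \omega_n$.

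The only remaining hypothesis is the subexponent condition $\sexp(A) > p$. Since $X^{(l)}$ is pseudoisomorphic to $\bigoplus_j \Lambda/(f_j^{e_j})$ with no $\mu$-summand, the orders of generators of the cyclic constituents at level $n$ grow without bound in $n$, hence $\sexp(X^{(l)}_n) > p$ for all sufficiently large $n$; this will be absorbed into the stabilization index $n_l$ by enlarging it if necessary. Lemma~\ref{ab}~C then yields $\iota(N(y)) = p\,y$ for all $y \in B$.

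Applying this identity to $y = x_{n+1}$, and using the norm-coherence $N(x_{n+1}) = x_n$, we obtain $\iota_{n,n+1}(x_n) = p\,x_{n+1}$. The general statement $\iota_{n,n+k}(x_n) = p^k x_{n+k}$ follows by iterating this identity across levels $n, n+1, \ldots, n+k$, using $\iota_{n,n+k} = \iota_{n+k-1,n+k} \circ \cdots \circ \iota_{n,n+1}$ together with the $\Z_p$-linearity of each $\iota_{m,m+1}$. The main technical point is the stability of the subexponent condition across the tower, which however follows directly from the vanishing of the $\mu$-invariant and the $\lambda$-growth pattern of the cyclic constituents of $X^{(l)}$.
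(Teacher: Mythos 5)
Your proof follows essentially the same route as the paper: both apply Lemma~\ref{ab}, part C, to $A = X^{(l)}_n$, $B = X^{(l)}_{n+1}$ with the endomorphism $T = \omega_n$ (using $\iota(A) \subseteq \Ker(\omega_n)$ and the expansion $\nu_{n+1,n} = p + \binom{p}{2}\omega_n + O(\omega_n^2)$) to get $\iota_{n,n+1}(x_n) = p\,x_{n+1}$, and then iterate over the levels $n, n+1, \ldots, n+k$. Your extra verifications (norm surjectivity, rank stability via Proposition~\ref{fuk}, and the subexponent condition $\sexp(X^{(l)}_n) > p$) are exactly the hypotheses the paper absorbs into the choice of the stabilization index $n_l$, so the argument is the same at the same level of rigor.
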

\begin{proof}
  The choice of $n_l$ implies that $\prk(X^{(l)}_n) =
  \prk(X^{(l)}_{n+1}) = \prk(X^{(l)})$.  For $k = 1$, we let $B =
  X^{(l)}_{n+1}$ and $A = X^{(l)}_n$. Then $N, \iota$ are the
  restriction $N_{\K_{n+1}, \K_n}$ and the lift map. The choice of $n$
  also implies that $\sexp(A) > p$ and we let $T = \omega_n$ in Lemma
  \ref{ab}. We deduce from point C that 
  \begin{eqnarray}
  \label{bases}
  \iota{x_n} = p x_{n+1},
  \end{eqnarray}
  which is the statement for $k = 1$. The general case follows by
  induction on $k$, letting $A = X^{(l)}_{n+i}, B = X^{(l)} x_{n+i+1}$
  for $i = 0, 1, \ldots, k-1$, successively, and applying the result
  for $k = 1$ established previously. Indeed, assume that for all $j
  \leq i$ we have $\iota_{n,n+i}(x_n) = p^i x_{n+i}$. Using also the
  fact that $\iota_{n+i,n+i+1}(x_{n+i}) = p x_{n+i+1}$ which follows
  from t\rf{bases}, we find
  \begin{eqnarray*}
  \iota_{n,n+i+1} (x_n) & = & \iota_{n+i,n+i+1} \left(\iota_{n,n+i} (x_n)\right) = \iota_{n+i,n+i+1} (p^i x_{n+i}) \\
  & = & p^{i+1} x_{n+i+1}, 
  \end{eqnarray*}  
  and thus it follows by induction that $\iota_{n,n+i}(x_n) = p^i x_{n+i}$ and
  the claim follows by letting $i = k$.
\end{proof}

\subsection{Decomposition}
We let $\KL/\K$ be some $\Z_p$-extension in which all the primes above
$p$ are totally ramified and $p^B$ be the exponent of $A^{\circ}$.  We
let $n_0 > 0$ be an index such that
\begin{eqnarray}
\label{kstab}
\sexp(L_{n_0}) \geq p^4 \quad \hbox{and} \quad \prk(L_n) = 
\prk(L_{n_0}) \quad \forall n \geq n_0.
\end{eqnarray}
Note that the condition \rf{kstab} is fulfilled by all submodules $L'
\subset L$ which are spanned by elements of infinite order -- or such
ones of order at least $p^4$. We assume, without loss of generality,
that this is the case for $L$ too. We note the following
\begin{fact}
\label{down}
With the notations of this section, for all $n > n_0$ and all $x =
(x_n)_{n \in \N} \in L$, we have
\[ L_{n+2}[ p^2 ] \subset \iota_{n, n+2}(L_{n}) \quad \hbox{ $and $ } 
\quad \omega_n \cdot \omega_{n_0} (x_{n+1}) = 0 .\]
\end{fact}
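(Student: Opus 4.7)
Both assertions follow from two general principles already established in this chapter: the lifting identity $\iota_{m,m+k}(x_m)=p^k x_{m+k}$ from Fact \ref{lgrow}, valid for $x\in L$ and $m\ge n_0$, and the socle/cokernel description in Lemma \ref{ab}(B) applied to the pair $(A,B)=(L_m,L_{m+1})$. The hypotheses of Lemma \ref{ab}(B) hold at every level $m\ge n_0$: norm-surjectivity is built into $x\in L$ being norm coherent, the $p$-ranks agree, $|L_{m+1}|/|L_m|=p^r$, $N\circ\iota=\cdot p$, and the assumption $\sexp(L_{n_0})\ge p^4$ together with the order identity $\ord(z)=p\,\ord(\iota Nz)$ gives inductively $\sexp(L_m)\ge p^{\,m-n_0+4}$.

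For the first inclusion, Fact \ref{lgrow} yields $\iota_{n,n+2}(L_n)=p^2 L_{n+2}$, so it suffices to show $L_{n+2}[p^2]\subseteq p^2L_{n+2}$. Decomposing $L_{n+2}\cong\bigoplus_i\Z/p^{a_i}$ with $a_i\ge 4$ (since $\sexp(L_{n+2})\ge p^4$), the socle $(\Z/p^{a_i})[p^2]=\langle p^{a_i-2}\rangle$ sits inside $\langle p^2\rangle=p^2(\Z/p^{a_i})$ precisely because $a_i-2\ge 2$; summing over $i$ gives the inclusion.

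For the annihilation identity, the key observation is that $\omega_n$ kills $A_n$, so $N_{n+1,n}(\omega_n x_{n+1})=\omega_n x_n=0$, whence
\[ \omega_n(x_{n+1})\in \ker\bigl(N_{n+1,n}\big|_{L_{n+1}}\bigr)=L_{n+1}[p]\subseteq \iota_{n,n+1}(L_n) \]
by Lemma \ref{ab}(B). Write $\omega_n(x_{n+1})=\iota_{n,n+1}(y_n)$; injectivity of $\iota_{n,n+1}$ on $L_n$ forces $y_n\in L_n[p]$. The crucial step is a downward induction: whenever $z=\iota_{m,m+1}(u)\in L_{m+1}[p]$ one has $u\in L_m[p]$, and Lemma \ref{ab}(B) then provides $u\in\iota_{m-1,m}(L_{m-1})$; iterating $n-n_0$ times yields $y_n\in\iota_{n_0,n}(L_{n_0})$. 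Writing $y_n=\iota_{n_0,n}(w_{n_0})$, the fact that $\omega_{n_0}$ acts trivially on $A_{n_0}$ and the $\Lambda$-equivariance of the lift maps then give
\[ \omega_n\omega_{n_0}(x_{n+1})=\omega_{n_0}\iota_{n,n+1}(y_n)=\iota_{n,n+1}\bigl(\iota_{n_0,n}(\omega_{n_0}w_{n_0})\bigr)=0. \]

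No step presents a serious obstacle: the proof is essentially formal once one recognises that $\omega_n(x_{n+1})$ lands in the socle of $L_{n+1}$, which by iterating Lemma \ref{ab}(B) can be pushed back through the lift maps down to the base level $n_0$, where $\omega_{n_0}$ vanishes. The only place where the quantitative hypothesis $\sexp(L_{n_0})\ge p^4$ plays a role is the cyclic decomposition check in the first claim.
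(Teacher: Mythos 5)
Your treatment of the first inclusion is essentially the paper's, seen from the other side: the paper observes that $\iota_{n,n+2}(L_n)$ has full rank and subexponent at least $p^2$ (kernel of exponent $p^2$, $\sexp(L_n)\geq p^4$) and hence contains $L_{n+2}[p^2]$, while you use the lift identity of Fact \ref{lgrow} to get $p^2L_{n+2}\subseteq\iota_{n,n+2}(L_n)$ and then check $L_{n+2}[p^2]\subseteq p^2L_{n+2}$; same ingredients, same conclusion. For the second identity your route is genuinely different. The paper argues by \emph{upward} induction on $n$: it factors $\omega_{n+1}=\omega_n\nu_{n+1,n}=\omega_n\bigl(p\,u(\omega_n)+\omega_n^{p-1}\bigr)$, kills the $p$-term by the induction hypothesis, and uses the first inclusion to place $\omega_nx_{n+2}$ in $L_{n+2}[p^2]\subseteq\iota_{n,n+2}(L_n)$, whence $\omega_n^2x_{n+2}=0$. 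You instead note that $\omega_nx_{n+1}$ lies in the socle and cascade it \emph{downward} through the lift maps to level $n_0$, where $\omega_{n_0}$ annihilates everything; notably your proof of the second identity never uses the first inclusion, which is an attractive simplification.

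The one point where your cascade asks for strictly more than the paper's proof is the injectivity of $\iota_{m,m+1}$ on $L_m$, which you use to force $y_n$, and then every pulled-back element, into $L_m[p]$ rather than merely $L_m[p^2]$. That injectivity is precisely the conclusion ``$\iota:A\ra pB$ is an isomorphism'' of Lemma \ref{ab}, point B, and it needs the order hypothesis $|L_{m+1}|/|L_m|=p^r$, which you assert to hold at every level but which is not among the standing assumptions \rf{kstab} and does not follow from rank equality plus norm surjectivity; it is a growth statement of the same kind that Fact \ref{lgrow} presupposes for the $\lambda$-quotient, and it is nowhere verified for the images $L_m$ of the submodule $L\subseteq A$. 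Without injectivity one only gets $pu\in\Ker(\iota_{m,m+1})\subseteq L_m[p]$, so the torsion bound of the pulled-back element grows by one power of $p$ per step; comparing with the available bound $\sexp(L_m)\geq p^{\,m-n_0+4}$ shows the cascade is only guaranteed down to levels $m\geq (n+n_0)/2$ and then stalls. The paper's upward induction via the factorization of $\omega_{n+1}$ is arranged exactly so that only the one-sided relations $p\,x_{m+1}=\iota_{m,m+1}(x_m)$ and the socle/$p^2$-torsion inclusions are needed, with no injectivity of the lift. So either supply a justification of $|L_{m+1}|/|L_m|=p^r$ (hence of Lemma \ref{ab}(B)) at all levels $m\geq n_0$, or fall back on the paper's induction for the second identity; as written, this is the only substantive gap.
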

\begin{proof}
  By hypothesis, we have $\sexp(L_n) \geq p^4$; since
  $\exp(\Ker(\iota_{n,n+2} : L_{n} \ra L_{n+2})) = p^2$, it follows
  that $\sexp(\iota_{n,n+2}(L_n)) \geq p^2$. The ranks are conserved,
  by hypothesis, so we conclude that $\iota_{n,n+2}(L_n) \supset
  L_{n+2}[ p^2 ]$. For arbitrary $n \geq n_0$ we have $p x_{n+1} =
  \iota_{n,n+1}(x_n)$ and thus $\omega_n x_{n+1} \in L_{n+1}[ p
  ]$. The second fact will now be proved by induction on $n$.

  For $n = n_0$ we have $\omega_n x_{n+1} \in L_{n+1}[ p ] \subset
  \iota(L_{n_0})$, and thus $\omega_n \omega_{n_0} (x_{n+1}) = 0$.
  Let now $n > n_0$ be fixed and assume that $\omega_n \omega_{n_0}
  (x_{n+1}) = 0$. Using
  \[ \omega_{n+1} = \omega_n \cdot \nu_{n+1,n} = \omega_n \cdot ( p
  u(\omega_n) + \omega_n^{p-1}), \] we conclude that
\begin{eqnarray*}
  \omega_{n_0} \omega_{n+1} (x_{n+2}) & = & \omega_{n_0} \omega_n p u(\omega_n) (x_{n+2}) + \omega_{n_0} \omega_n^p (x_{n+2}) \\ 
  & = & \iota_{n+1,n+2}\left(\omega_{n_0} \omega_n (x_{n+1}) u(\omega_n)\right) +  \omega_{n_0} \omega_n^p (x_{n+2}) 
  = \omega_{n_0} \omega_n^p (x_{n+2})
\end{eqnarray*}
where the last equality follows from the induction hypothesis. Now
$p^2 \omega_n x_{n+2} = \iota_{n,n+2}(\omega_n x_n) = 0$, hence
$\omega_n x_{n+2} \in L_{n+2}[ p^2 ] \subset \iota_{n,n+2}(L_n)$ and
thus $\omega_n^2 x_{n+2} = 0$, which completes the proof.
\end{proof}
We let $B$ be an upper bound for $v_p(\mu)$ over the $\mu$ invariants
of all $\Z_p$-extensions of $\K$, so $p^B$ is a safe upper bound for
$\exp(M(\KL)/F(\KL))$.  We let $F(T) \in \Lambda$ be the minimal
annihilator polynomial of $L(A)$ and note that $\rg{D}:= A/D$ is a
finite $\Lambda$-module.  We shall also assume, without restriction of
generality, that $n_0 = 1$ for our base field $\K$: this can be
achieved by a shift up of the base field.

Passing to decomposition, we note the following property:
\begin{lemma}
\label{pt}
Let $\KL/\K$ be a $\Z_p$-extension satisfying the condition \rf{kstab}
and let the further notations be as defined above. The modules $D, M,
L, F$ are defined with respect to $A$. If $p x \in D$ then $T^2 x, \
\omega_2 x \in D$.

Moreover, if $x \not \in D$ but $p x, T x \in D$ and $T x =
x_{\lambda} + x_{\mu}$, then $\ord(x_{\mu,1}) = p$ and $x_{\mu,1} = -
x_{\lambda,1}$.
\end{lemma}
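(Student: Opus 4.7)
For Part~1, I first reduce $\omega_2 x \in D$ to $T^2 x \in D$. Since every non-leading coefficient of $\omega_2 = (1+T)^{p^2} - 1$ is divisible by $p$, we may write $\omega_2 = T^{p^2} + p \cdot h(T)$ for some $h \in \Lambda$; then $\omega_2 x - T^{p^2} x = h(T)(p x) \in h(T) D \subseteq D$, so $\omega_2 x \in D \iff T^{p^2} x \in D$, which follows from $T^2 x \in D$ by $\Lambda$-stability of $D$.

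For $T^2 x \in D$: if $x \in D$ the conclusion is trivial, so assume $\delta(x) = 1$; by Lemma~\ref{mpart}, $p x = c + z$ with $c \in L \setminus p L$ and $z \in M$. I would seek $l \in L$ with $p(T^2 x - l) \in M$, which suffices because $M = A^\circ$ is closed under $p$-division (if $p y \in M$, then $p^{k+1} y = 0$ for some $k$, hence $y \in A^\circ = M$), yielding $T^2 x - l \in M$ and $T^2 x \in D$. Computing $p(T^2 x - l) = T^2 c + T^2 z - p l$, the requirement $T^2 c - p l \in L \cap M = F$ reduces the crux to the algebraic claim $T^2 c \in p L + F$. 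To establish this, I project to level $1$ and invoke Fact~\ref{lgrow}, which says $\nu_{n+1,n}$ acts as multiplication by $p$ on $L_{n+1}$; expanding $\nu_{n+1,n} = p + \binom{p}{2} \omega_n + O(\omega_n^2)$ and equating with $\cdot p$ gives $\omega_n \cdot \left(\binom{p}{2} + O(\omega_n)\right) L_{n+1} = 0$, which after inverting $(p-1)/2 \in \Z_p^\times$ reads $p \omega_n L_{n+1} \subseteq \omega_n^2 L_{n+1}$. Combined with Fact~\ref{down} and $\omega_1 = T u(T)$ with $u(0) = p$, this translates in the inverse limit to $T^2 c \in p L + F$.

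For Part~2, given $x \notin D$ with $p x, T x \in D$, write $T x = x_\lambda + x_\mu$ and $p x = c + z$ with $c \notin p L$. Then $p(T x) = T(p x)$ yields $p x_\lambda - T c = T z - p x_\mu \in L \cap M = F$. Projecting to level $1$ and applying the key claim $T^2 c \in p L + F$, we deduce $p x_{\lambda,1} \equiv T c_1 \pmod{F_1}$ where $T c_1$ has order exactly $p$ modulo $p L_1 + F_1$; the latter follows from $c_1 \notin p L_1$ combined with the stabilization hypothesis $\sexp(L_1) \geq p^4$, which forces $L_1[p] \subseteq p L_1$ and hence pins down the order. This gives $\ord(x_{\lambda,1}) = p$, and the symmetric analysis on the $M$-side yields $\ord(x_{\mu,1}) = p$. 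Finally, the identity $T x_1 = x_{\lambda,1} + x_{\mu,1}$, combined with the Part~1 conclusion that $T^2 x \in D$ together with $x \notin D$ forcing $T x_1$ to lie in $F_1$ after projection (and then $T x_1 = 0$ by the $\sexp$ condition), yields $x_{\mu,1} = -x_{\lambda,1}$.

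The main obstacle is the key claim $T^2 c \in p L + F$. While Fact~\ref{lgrow} provides the clean identity ``$\nu_{n+1,n} = \cdot p$ on $L_{n+1}$'', extracting effective $T^2$-divisibility requires a careful polynomial manipulation in the Iwasawa algebra: expanding $\nu_{n+1,n}$ in powers of $\omega_n$, using the factorization $\omega_1 = T u(T)$, applying Fact~\ref{down}, and passing to the inverse limit. The hypothesis $\sexp(L_1) \geq p^4$ is essential throughout, both for the applicability of Lemma~\ref{ab}(C) at each level transition $L_n \to L_{n+1}$ and for the cancellation argument in Part~2 that collapses $T x_1$ to zero.
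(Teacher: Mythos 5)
Your reduction steps are fine as far as they go: replacing $\omega_2 x$ by $T^{p^2}x$ via $\omega_2-T^{p^2}\in p\Lambda$ and $px\in D$ is exactly the observation the paper makes at the end of its proof, and your use of the saturation of $M=A^{\circ}$ (if $p\,y\in A^{\circ}$ then $y\in A^{\circ}$) to reduce ``$T^2x\in D$'' to the single algebraic claim $T^2c\in pL+F$ is correct and, if that claim were available, would give a cleaner route than the paper's. The problem is that this claim is precisely where all the difficulty of Lemma \ref{pt} lives, and you do not prove it. The justification offered --- expand $\nu_{n+1,n}=p+\binom{p}{2}\omega_n+O(\omega_n^2)$, use Fact \ref{lgrow} to get $(\nu_{n+1,n}-p)L_{n+1}=0$, hence $p\,\omega_n L_{n+1}\subseteq\omega_n^2 L_{n+1}$, and then ``this translates in the inverse limit to $T^2c\in pL+F$'' --- is not an argument. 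First, the inclusion you derive expresses $p\omega_n$-multiples in terms of $\omega_n^2$-multiples, i.e.\ it points in the opposite direction from what you need (a $T^2$-multiple inside $pL+F$). Second, Fact \ref{lgrow} concerns the quotient $X^{(l)}$ and norm-coherent sequences at levels beyond stabilization; transferring such finite-level identities to the limit element is exactly the step the paper has to work for: it produces, via Lemma \ref{ab} and Fact \ref{down}, an identity with an auxiliary element $z=z^{(n)}$ depending on $n$ (using \rf{an}/Lemma \ref{ydef} and Sands' Lemma \ref{sa} to locate $z$ in $M+A[T]$), and then must remove the $n$-dependence either by the distance/visibility argument of Fact \ref{dlam} or by the compactness argument in $\Lambda^s$ extracting a convergent subsequence $z^{(n)}\to z\in M$. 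Your proposal silently assumes the outcome of this entire process; note also that the paper's argument uses the extra inputs $c\notin pL$ (Lemma \ref{mpart}) and the order condition $\ord(p^l x_n)>p$ built into the choice of the base field, none of which appear in your sketch.

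Part 2 has a second gap of the same kind. The substantive content there is $x_{\mu,1}\neq 0$ (indeed $\ord(x_{\mu,1})=p$); the identity $x_{\mu,1}=-x_{\lambda,1}$ then follows trivially because $\Gamma$ acts trivially on $A_1$, so $(Tx)_1=0$ --- no projection of $T^2x\in D$ is needed for that. Your route to the order statement does not work: from $p(Tx)=T(px)$ one gets $px_{\lambda}-Tc\in F$, but at level $1$ the element $Tc_1$ is simply $0$, so the assertion that ``$Tc_1$ has order exactly $p$ modulo $pL_1+F_1$'' collapses, and the appeal to a ``symmetric analysis on the $M$-side'' has no content, since the $\mu$-part carries no analogue of the multiplication-by-$p$ shift on which your $\lambda$-side reasoning rests. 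The paper instead proves $x_{\mu,1}\neq 0$ by contradiction: assuming $x_{\mu,1}=0$ it manufactures, via Lemma \ref{ydef}, an element $z$ with $\iota_{n,n+1}(x_{n+1}-z_{n+1})=-px_{n+1}\in L_n$, and then uses that $d_n(x,y)\to\infty$ for $x\notin D$, $y\in D$, while the right-hand side keeps bounded distance. Some argument of this nature (or a genuine proof of your key claim) is needed before either part of your proposal is complete.
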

\begin{proof}
  Let $w \in A \setminus (\eu{M} A + D)$ and suppose that $l \leq B$
  is the smallest integer such that $p^l w \in L$ and let $f_w(T)$ be
  the minimal annihilator polynomial of $p^l w$. Then $y := f_w(T) w
  \in M$ and $p^l y = 0$. There is some $0 < d \leq l \leq B$ such
  that $x := p^{d-1} w$ verifies $x \not \in D$ but $p x \in D$; note
  that $l,d$ are the orders introduced in \rf{dords}. Let $\id{X} = \{
  x \in A \setminus D \ : \ p x \in D \} \subset A$ and $\id{X}'
  \subseteq \id{X}$ be the set of those elements that arise as
  described above. Then
  \[ p^j x_{n+j} - \iota_{n,n+j}(x_n) \in f_x(T) \Lambda x_{n+j}
  \subset \Lambda y_{n+j}, \quad \forall j > 0. \] In particular
  $\iota_{n,n+l}(x_n) = p^l x_{n+l} - h_{n+l}(T) (f_x x_{n+l})$ is
  decomposed and for $n > n_0$ and $x \in A \setminus D$ such
  that $\ord(p^l x_n) > p$, we have
\begin{eqnarray}
\label{strong}
p^l x_{n+l} - \iota_{n,n+l}(x_n) = f_x(T) h_n(T) x_{n+l} \in M_{n+l}.
\end{eqnarray} 
Indeed, consider the modules $B = \Lambda x_{n+1}/(f_x \Lambda
x_{n+1})$ and $A = \Lambda x_n/ (f_x \Lambda x_n)$. Since
$\iota_{n,n+1} (x_{n}) \not \in f_x \Lambda x_{n+1}$ for $n > n_0$ --
as follows from the condition imposed on the orders -- the induced map
$\iota : A \ra B$ is rank preserving. We can thus apply the Lemma
\ref{ab}, which implies the claim \rf{strong}, and deduce under the
above hypothesis on $n$, that
\[ p^l x_{n+l} = p^{l-1} c_{n+l} = \iota_{n+1,n+l} (c_{n+1}) =
\iota_{n,n+l}(x_n) + h y_{n+l}, \quad h \in \Z_p[ T ] . \] By Fact
\ref{down} and the choice of $\K$ such that $n_0 = 1$, we have
$\omega_n T c_{n+1} = 0$.  Applying $\omega_n$ to the above identity
we find $T h \omega_n y_{n+l} = 0$. The relation \rf{an} implies that
there is some $z \in A$ such that $T^2 h \omega_n y = \omega_{n+l}
z$. In addition, we have $p^l \omega_{n+l} z = 0$. The result of Sands
of Lemma \ref{sa} yields $z \in M + A[ T ]$.  Then $\omega_n( T^2 h y
- \nu_{n,n+l} z) = 0$ implies $T^2 h y \in \nu_{n,n+l} z + A[ T ] +
F$. Since $h y \in M$, it follows that $z \in M$ and $T^2 h y \in
\nu_{n, n+l} z - \phi$, say, for some $\phi \in F$.  Reinserting this
relation in the initial identity, we find
  \begin{eqnarray}
  \label{cdeco}
  \iota_{n,n+l}( T^2 x_n + z_n) = \iota_{n+1,n+l} (T^2 c_{n+1}) - \phi_{n+l}
  \end{eqnarray} 

  Note that the right hand side is in $L$ and thus has uniformly
  bounded $p$-rank.  This leads to the following two proofs for the
  fact that \rf{cdeco} implies that $T^2 x$ must be decomposed.  For
  the first, we invoke the Lemma \ref{dlam} with respect to the
  sequence $w^{(n)} = T^2 x + z$, where the upper index stresses the
  fact that the choice of $z$ depends on $n$. Since $d_n(
  \iota_{n+1,n+l} (T^2 c_{n+1}) ) \leq \prk(L)$ for all $n$, the Lemma
  \ref{dlam} implies that there is an uniform $n_0 > 0$ such that
  $w_n^{(n)} \in \nu_{n,n_0} A + F$. But then
  \[ w_n^{(n)} = \iota_{n,n+l}( T^2 x_n + z_n) = \nu_{n,n_0} (a_n +
  f_n) \in \iota_{n_0,n}(A_{n_0}) . \] It follows in particular that
  \[ \ord(T^2 x_n + z_n) \leq p^l \ord(\iota_{n,n+l}( T^2 x_n + z_n))
  \leq p^l \exp(A_{n_0}) . \] This holds for arbitrary large $n$ and
  since $z_n \in M$ we have $\ord(T^2 x_n + z_n) = \ord(T^2 x_n)$,
  thus obtaining a contradiction if $T^2 x_n \not \in D$, case in
  which $\ord(x_n) \ra \infty$.

  The second proof uses topological facts.  If $f \in Z_p[ T ]$ is the
  minimal annihilator polynomial of $p^m x$ and thus of $c$, then we
  found that for every $n$ there is a $z = z^{(n)} \in M$ such that $f
  T^2 x_n + f z^{(n)}_n = 0$, thus
  \[ w_n := -T^2 y_n = f(T) z^{(n)}, \quad z^{(n)} \in M .\] Let $m >
  n$; by definition, we have $w_m = f(T) z_m^{(m)}$ and, since $w =
  -T^2 y$ is a norm coherent sequence, a fortiori, $w_n = f(T)
  z_n^{(m)}$. We may assume that $z^{(m)} = z^{(n)}$ and therefore,
  upon extracting subsequences from the sequence $z^{(n)}$, the
  defining condition $w_n = f(T) z^{(n)}$ is conserved.  Since $M$ is
  a Noetherian module, we may choose a minimal system of generators
  $u^{(i)} \in M \setminus \eu{M} M, i = 1, 2, \ldots, s$ and let
  $z^{(n)} = \sum_{i=1}^s c^{(n)}_i u^{(i)}, c^{(n)}_i \in \Lambda$,
  where the representation is not unique. We obtain thus a sequence
  $(C_n)_{n \in \N}$ with $C_n = \left(c^{(n)}_i\right)_{i=1}^s \in
  \Lambda^s$. In the $\eu{M}$-adic product topology, $\Lambda^s$ is a
  compact space. Letting $p^{B} M = 0$, we see that we may choose
  $c^{(n)}_i \in \Z_p[ T ]$ as polynomials with degree
  $\deg(c^{(n)}_i) \leq \deg \omega_n$ and coefficients of valuation
  at most $B$. There is a converging subsequence $C_{n_i}$. After
  eventual renumeration, we may thus assume that the sequence $C_n$ is
  convergent. Let $C = (c_i)_{i=1}^s = \lim_n C_n$ and let for all $n$
  the polynomial $\omega_{n,B} \in \Z[ T ]$ have coefficients in $\{
  0, 1, \ldots, p^{B}-1\}$ and verify $\omega_{n,B} \equiv
  \omega_n \bmod p^{B}$. Note that the polynomials $c^{(n)}_i$ are
  all defined modulo $\omega_{n,B}$, while $c_i$ is defined modulo
  $p^{B}$.  After eventually extracting a new subsequence, we may
  assume that the $C_n$ are such that
\begin{eqnarray}
\label{conv}
c^{(n)}_i - c_i \in \omega_{n, B} \Lambda, \quad \quad 
\hbox{ for all $n > 0$ and $i = 1, 2, \ldots, s$} .  
\end{eqnarray}
Let $z = \lim_n z^{(n)} = \sum_i c_i u^{(i)}$. From $w_n = f(T)
z^{(n)}$ we deduce that $w_n = \sum_i f(T) c_i^{(n)} u^{(i)}_n$ and
since $c_i^{(n)} \equiv c_i \bmod \omega_{n,B}$ it follows that $w_n
= \sum_i f(T) c_i u^{(i)}_n = f(T) z_n$. We have thus proved that $w =
f(T) z$ for some $z \in M$ and thus $f(T) ( z + T^2 x ) = 0$, hence $z
\in T^2 x + L$, which proves that $T^2 x \in D$ as claimed. Moreover,
$\omega_2 = T (p u(T) + T^{p-2})$ and since $p x$ and $T^2 x \in D$ it
follows also that $\omega_2 x \in D$, which completes the (second)
proof of the first statement.

Suppose now that $T x = x_{\mu} + x_{\lambda}$ and $p x_{\mu} =
x_{\mu,1} = 0 $. Let $\id{N} - p = p s f_2 (T) + s^{p-1}, f_2(T) \in
\Lambda^{\times}$, and note that, in stable growth, $\omega_n
x_{\lambda,n+1} \in L_{n+1}[ p ]$ and thus $\omega_n^2 x_{\lambda,n+1}
= p \omega_n x_{\lambda, n+1} = 0$. Since $p x_{\mu} = 0$, we have
\begin{eqnarray*}
  -p x_{n+1} + \iota_{n,n+1} x_n & = & 
  p \nu_{1,n} f_2(\omega_n) (x_{\lambda, n+1} + x_{\mu, n+1}) + 
  \nu_{1,n} \omega_n^{p-2} (x_{\lambda, n+1} + x_{\mu, n+1}) \\
  & = & T^{D_n-1} x_{\mu,n+1}, \quad D_n = \deg(\omega_{n+1}-\omega_n) = \deg(\nu_{n,n+1}).
\end{eqnarray*}
Writing $r_{n+1}$ for the right hand side in the above identities, we
consequently obtain $\omega_n r_{n+1} = \nu_{n+1,1} x_{\mu,n+1} =
x_{\mu,1} = 0$. By Lemma \ref{ydef}, there is thus some $z \in A$ such
that $\omega_n r = \omega_n (\nu_{n+1,n} - p) x = \omega_{n+1} z$, so
$r = \nu_{n,n+1}(z_{n+1})$ and consequently $\iota_{n,n+1}(x_{n+1} -
z_{n+1}) = -p x_{n+1} \in L_n$. However, we have seen that for $x \not
\in D$ and $y \in D$, the distance $d_{n}(x, y) \ra \infty$; in
particular the distance on the left hand side of the last identity
will diverge, while the right hand side has upper bounded distance,
since $p x \in L$.  This contradiction implies that for $x \in A
\setminus D$ such that $p x, T x \in D$, we must have $x_{\mu,1} \neq
0$. Since $x_{\mu,1} + x_{\lambda,1} = T x_1 = 0$, it follows that
$x_{\mu,1} = - x_{\lambda,1}$.  Since we assume that the growth of $A$
is stable from the ground field, we have $\ord(x_{\lambda, 2}) = p
\ord(x_{\lambda,1}) = \ord(T x_2) = p$, thus $\ord(x_{\lambda,1}) =
\ord(x_{\mu,1}) = p$, which completes the proof.
\end{proof}
For individual $\Z_p$-extensions, we have:
\begin{proposition}
\label{deco}
Let $\KL/\K$ be a $\Z_p$-extension and $A, \Lambda$ be associated to
$\KL$ as usual. If $p^B$ is the exponent of ${A}^{\circ}$, then
$\eu{M}^{2B} A \subset D(A)$ and $\omega_B A \subset D(A)$.
\end{proposition}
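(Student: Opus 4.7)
The plan is a two-step bootstrap. First, bound the $D$-order of every element of $A$ by $B$, that is $p^B A \subseteq D$. Second, propagate this bound by iterating Lemma~\ref{pt} to obtain an ideal of $\Lambda$ annihilating $A/D$ which contains both $\eu{M}^{2B}$ and $\omega_B$. I expect the first step to be the main obstacle; once $\delta(x) \leq B$ is available for every $x$, the remaining work is formal commutative-algebra bookkeeping inside $\Lambda$.

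For $p^B A \subseteq D$, I would fix a pseudoisomorphism $\phi \colon A/F(A) \hookrightarrow \id{E}(A) = \id{E}(A)_\lambda \oplus \id{E}(A)_\mu$; it is injective because $\id{E}(A)$ contains no finite $\Lambda$-submodules. Since $\id{E}(A)_\mu = \bigoplus_i \Lambda/(p^{e_i})$ with $\max_i e_i = B$, multiplication by $p^B$ kills the $\mu$-summand, and hence $\phi(p^B \bar x) \in \id{E}(A)_\lambda$ for every $\bar x \in A/F(A)$. The essential point is that a pseudoisomorphism respects the $\lambda/\mu$-dichotomy: any $\bar y \in A/F(A)$ whose $\phi$-image has trivial $\mu$-component generates a $\Lambda$-module of finite $p$-rank and therefore lies in $L(A)/F(A) \subseteq A/F(A)$. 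Applied to $\bar y = p^B \bar x$, and using $F(A) \subseteq L$, this gives $p^B x \in L \subseteq D$.

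Next I would iterate Lemma~\ref{pt} inside the annihilator ideal $I := \{h \in \Lambda : h A \subseteq D\}$. By induction on $k$, I would prove $p^{B-k}(T^2, \omega_2)^k \subseteq I$ for every $k = 0, 1, \ldots, B$, the base case $k = 0$ being Step~1. For the step from $k$ to $k+1$, fix a monomial $w$ of degree $k$ in $\{T^2, \omega_2\}$ and an arbitrary $x \in A$; then $p \cdot (p^{B-k-1} w x) = p^{B-k} w x \in D$ by the inductive hypothesis, so Lemma~\ref{pt} applied to $y := p^{B-k-1} w x$ gives $T^2 y, \omega_2 y \in D$, completing the inductive step. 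Summing over $k$ yields $(p, T^2, \omega_2)^B = \sum_{k=0}^{B} p^{B-k}(T^2, \omega_2)^k \subseteq I$.

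The two desired inclusions thus reduce to $\eu{M}^{2B}, \omega_B \in (p, T^2)^B$. A monomial $p^i T^j$ of $\eu{M}^{2B}$ with $i + j = 2B$ factors as $p^i (T^2)^{\lfloor j/2 \rfloor} \cdot T^{j \bmod 2}$, and a short case analysis shows $i + \lfloor j/2 \rfloor \geq B$ in every case. For $\omega_B = \sum_{k=1}^{p^B} \binom{p^B}{k} T^k$, the identity $v_p\!\binom{p^B}{k} = B - v_p(k)$ together with the elementary bound $k \geq p^{v_p(k)} \geq 2\, v_p(k)$ (valid for every $k \geq 1$ when $p \geq 2$) lets each term be written as $p^{B - v_p(k)} T^{2 v_p(k)}$ times a unit and a pure power of $T$, which sits in $(p, T^2)^B$. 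Both $\eu{M}^{2B} A \subseteq D$ and $\omega_B A \subseteq D$ follow.
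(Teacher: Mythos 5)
Your proposal is correct and runs on the same engine as the paper's proof: iterating Lemma~\ref{pt} to pass from $p\cdot y\in D$ to $T^2y,\ \omega_2 y\in D$, organized as an induction (the paper inducts on the decomposition order of a single element, you on powers of the ideal $(T^2,\omega_2)$, which is only a cosmetic repackaging). The genuine value you add is making explicit two points the paper leaves implicit — the base bound $p^B A\subseteq L\subseteq D$ via a pseudoisomorphism into $\id{E}(A)$ (which is what lets the induction stop at $B$; note the injectivity you invoke really comes from $A/F(A)$ having no finite submodules, though only finiteness of the kernel is needed), and the elementary verification that $\eu{M}^{2B}$ and $\omega_B=(1+T)^{p^B}-1$ lie in $\sum_{k\le B} p^{B-k}(T^2,\omega_2)^k$, which the paper dismisses as a "similar induction."
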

\begin{proof}
  For $x \in A$ we let $k = \ord_D(x) = \min \{ j : p^j x \in D \}$ be
  the \textit{decomposition order} of $x$. The proof will follow by
  induction on $k$, on base of the Lemma \ref{pt}

  For $k = 1$, it is a direct consequence of the lemma, since
  $\eu{M}^2 = (p, pT, T^2)$. Assume that the statement holds for all
  $x \in A$ with $\ord_L(x) < k$ and note that $\eu{M}^{2k} = (p^2, p
  T, T^2 ) \eu{M}^{2(k-1)}$.  Since we assumed that $p^k x \in D$, it
  follows that $p x$ has order $k-1$ and by induction hypothesis, we
  have $p \eu{M}^{2(k-1)} x \subset D$. For arbitrary $w \in
  \eu{M}^{2(k-1)} x$ we have thus $p w x \in D$ and the Lemma \ref{pt}
  implies that $T^2 w x \in D$. The choice of $w$ being free, it
  follows that $T^2 \eu{M}^{2(k-1)} x \subset D$ too, hence
  $\eu{M}^{2k} x \subset D$. This holds for all $k$, and letting $k =
  m$ we conclude that $\eu{M}^{2 m} A \subset D$, which completes the
  proof. The fact $\omega_m A \subset D$ follows from Lemma \ref{pt}
  by induction too, the proof being similar.
\end{proof}

As a consequence, 
\begin{corollary}
\label{nm}
Let $p^B$ be the exponent of $M$ and suppose that $n' > n_0 + B$ with
$n_0$ the stabilization index of $\KL$; if we shift the base field
according to $\K_1 = \K_{n'}$ and redefine $\Lambda$ accordingly, then
$T A \subset D$.
\end{corollary}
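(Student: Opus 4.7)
The strategy is to reduce the corollary to Proposition \ref{deco} by tracing through the effect of the base change on the relevant algebras and submodules. Proposition \ref{deco} already yields $\omega_B A \subseteq D$ in the current setup, and after shifting the base field to $\K_{n'}$ the new Iwasawa algebra becomes $\Lambda' = \Z_p[[\omega_{n'}]]$, as recalled in the introduction, so the new topological parameter, which I will denote $T'$, is literally $\omega_{n'}$ viewed inside the original $\Lambda$. Since $n' > n_0 + B \geq B$, and the divisibility $\omega_B \mid \omega_{n'}$ holds in $\Lambda$ (because $(1+T)^{p^B}-1$ divides $(1+T)^{p^{n'}}-1$), one obtains at once
\[ T' A \;=\; \omega_{n'} A \;\subseteq\; \omega_B A \;\subseteq\; D. \]

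The only remaining point, and the main (mild) obstacle, is to verify that the notion of decomposability is preserved under the change of Iwasawa algebra, i.e.\ that $D$, defined with respect to $\Lambda$, is contained in the analogous decomposable submodule $D'$ computed relative to $\Lambda'$. The $\mu$-part $M = A^{\circ}$ is the $\Z_p$-torsion submodule of $A$ and is intrinsic, so it coincides in the two setups. For the $\lambda$-part, since $\Lambda' \subseteq \Lambda$ we have $\Lambda' x \subseteq \Lambda x$ for every $x \in A$, hence $\prk(\Lambda' x) \leq \prk(\Lambda x)$ and therefore $L \subseteq L'$. Consequently $D = L + M \subseteq L' + M = D'$, and combining this observation with the display above gives $T' A \subseteq D'$, which is the desired conclusion.

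The reason the hypothesis is $n' > n_0 + B$ rather than merely $n' > B$ is to preserve the stabilization condition \rf{kstab} and the rank/exponent hypotheses used throughout Lemma \ref{pt} and Proposition \ref{deco} when the base field is moved to $\K_{n'}$; the summand $B$ then supplies exactly the slack needed for $\omega_B$ to divide the new topological parameter, which is the algebraic heart of the argument.
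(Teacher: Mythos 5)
Your proof is correct and takes essentially the same route as the paper: both identify the new parameter with $\omega_{n'}$ inside the original algebra and reduce to Proposition \ref{deco}, the paper by noting $\omega_{n'} \in \eu{M}^{2B}$ and invoking the clause $\eu{M}^{2B} A \subset D$, you by using $\omega_B \mid \omega_{n'}$ and the clause $\omega_B A \subset D$. Your extra verification that decomposability survives the change of Iwasawa algebra ($M$ intrinsic, $L \subseteq L'$, hence $D \subseteq D'$) is a point the paper's proof leaves implicit.
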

\begin{proof}
  Let us write $\Lambda^{(0)}, T^{(0)}, \omega^{(0)}$, etc for the
  Iwasawa algebra and its elements, defined with respect to the
  initial base field $\K^{(0)}$, say. We have then $T =
  \omega^{(0)}_{n'}$. A simple computation shows that $\omega_n \in
  \eu{M}^n$ for all $n$, so then $\omega^{(0)}_{n'} \in
  (\eu{M}^{(0)})^{2B}$ and the claim follows from the Proposition
  \ref{deco}.
\end{proof}

The results above are indicative for what can be achieved in full
generality.  In our context, we shall need the following specific
application for CM fields:
\begin{lemma}
\label{decob}
Let $\K'$ be a CM galois extension of $\Q$ containing the \nth{p}
roots of unity and let $\KL'/\K'$ be a $\Z_p$ CM extension. The
modules $A, D, L, F$ are defined with respect to this extension and we
consider $x \in A^-$ such that $p x = c + v \in D^-$ with $c \in L^-,
v \in M^-$, such that $\omega_n T c_{n+1} = 0$ for all $n \geq 0$.
Then $T x \in D^-$
\end{lemma}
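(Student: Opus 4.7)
The plan is to build a decomposition $T x = c' + v'$ with $c' \in L^-$ and $v' \in M^-$ directly, upgrading the immediate consequence $T^{2} x \in D^-$ (from Lemma \ref{pt} applied to $p x \in D^-$) to the sharper $T x \in D^-$ by using the extra annihilation on $c$. Applying $T$ to $p x = c + v$ gives $p T x = T c + T v$, so the task reduces to dividing out the factor $p$ in a $\Lambda$-equivariant, norm-coherent way. Suppose we have produced a norm-coherent $c' \in L^-$ with $T c - p c' \in M^-$. Setting $v' := T x - c'$, we compute
\[
  p v' \;=\; p T x - p c' \;=\; T v - (p c' - T c) \;\in\; M^- + M^- \;=\; M^- .
\]
Since $M^- = (A^-)^{\circ}$ is the $\Z_p$-torsion submodule of $A^-$, the inclusion $p v' \in M^-$ forces $v' \in M^-$: from $p^{k}(p v') = 0$ one deduces $p^{k+1} v' = 0$. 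Hence $T x = c' + v' \in L^- + M^- = D^-$.

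For the construction of $c'$ we exploit the hypothesis $\omega_n T c_{n+1} = 0$, which places $T c_{n+1}$ in the $\tau_n$-fixed part $(A^-_{n+1})^{\tau_n}$. By Iwasawa's Theorem (Lemma \ref{ydef} together with \eqref{an}), in the stable range this fixed submodule coincides with $\iota_{n,n+1}(A^-_n)$ modulo a contribution from $Y^-$, which in the minus part of a CM extension is under control. Writing $T c_{n+1} = \iota_{n,n+1}(a_n) + (\text{$Y^-$-term})$ and decomposing $a_n \in D^-_n = L^-_n + M^-_n$ (which holds up to a finite correction, as $[A^-_n : D^-_n] < \infty$), Fact \ref{lgrow} gives $\iota_{n,n+1}(L^-_n) = p L^-_{n+1}$ in the stable range, while $\iota(M^-_n) \subset M^-_{n+1}$ is automatic because $\iota$ preserves the $\Z_p$-torsion. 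We conclude
\[
 T c_{n+1} \in p L^-_{n+1} + M^-_{n+1} .
\]
Consequently each set $S_{n+1} := \{\, w \in L^-_{n+1} \ : \ T c_{n+1} - p w \in M^-_{n+1}\, \}$ is non-empty. The norm maps $N_{n+2,n+1}$ carry $S_{n+2}$ into $S_{n+1}$ compatibly, and the $S_n$ are finite; hence $\varprojlim_n S_n \neq \emptyset$, producing the required norm-coherent $c' \in L^-$ with $T c - p c' \in M^-$.

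The main obstacle is precisely the translation of the $\tau_n$-fixed statement $\omega_n T c_{n+1} = 0$ into a statement of genuine $p$-divisibility in $L^-_{n+1}$ modulo $M^-_{n+1}$, and then patching the level-by-level choices into a norm-coherent sequence in $L^-$. This rests on combining Iwasawa's Theorem (describing the $\tau_n$-fixed part of $A^-_{n+1}$) with the rank-preserving multiplication-by-$p$ lift in the $\lambda$-part given by Fact \ref{lgrow} (ultimately Lemma \ref{ab}), together with a standard inverse-limit compactness argument. Once $c'$ is in hand, the remaining step is the trivial observation that $p v' \in M^-$ forces $v' \in M^-$, exploiting that $M^-$ is the full $\Z_p$-torsion submodule of $A^-$.
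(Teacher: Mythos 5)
Your overall strategy -- reduce everything to producing $c' \in L^-$ with $T c - p c' \in M^-$, then observe that $p v' \in M^-$ forces $v' \in M^-$ because $M^- = (A^-)^{\circ}$ is the full $\Z_p$-torsion submodule -- is sound, and the inverse-limit patching at the end is unobjectionable (both $L^-$ and $M^-$ are finitely generated, hence closed, submodules of the compact module $A^-$, so a norm-coherent sequence of elements of the level-wise images does come from $L^-$, resp.\ $M^-$). The problem is the central step, where you extract the divisibility $T c_{n+1} \in p L^-_{n+1} + M^-_{n+1}$ from the hypothesis $\omega_n T c_{n+1} = 0$. First, Iwasawa's Theorem 6 in the form of \rf{an} and Lemma \ref{ydef} describes the kernels of the projections $A^- \ra A^-_n$ (which elements of the limit die at level $n$), not the $\omega_n$-torsion of $A^-_{n+1}$. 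The fixed module $A^-_{n+1}[\omega_n]$ is governed by the ambiguous class number formula and in general strictly contains $\iota_{n,n+1}(A^-_n)$ by a large margin -- classes of ramified primes and norm-residue obstructions contribute, and nothing in the paper (or in your argument) shows that this discrepancy is ``a contribution from $Y^-$'' that is ``under control'' in the minus part; indeed the Thaine-shift situation to which this lemma is applied is exactly one where new ambiguous classes not coming from below abound. Second, your appeal to Fact \ref{lgrow} (i.e.\ $\iota_{n,n+1}(L^-_n) = p L^-_{n+1}$) is only valid beyond the stabilization index of the $\lambda$-part of the extension in question; but Lemma \ref{decob} is stated precisely for the situation where no stability statement for $L^-(\KL')$ is available -- that is the declared difference in premise from Lemma \ref{pt} -- so invoking ``the stable range'' assumes what the hypothesis deliberately avoids. (The finite correction coming from $[A^-_n : D^-_n] < \infty$ is a smaller issue, but it is also left unabsorbed in your divisibility claim.)

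By contrast, the paper does not try to identify the fixed part at all: it reruns the argument of Lemma \ref{pt}, namely the rank-preserving Lemma \ref{ab} applied to $\Lambda x_{n+1}/f_x\Lambda x_{n+1}$ versus $\Lambda x_n/f_x\Lambda x_n$ to get the congruence \rf{strong}, then applies $\omega_n$, uses the assumed relation $\omega_n T c_{n+1} = 0$ in place of Fact \ref{down}, and only then uses \rf{an} -- correctly, for a limit element whose component at a finite level vanishes -- together with Sands' Lemma \ref{sa} and the distance (Fact \ref{dlam}) or compactness argument to force the $\mu$-component to exist in the limit. If you want to salvage your route, the divisibility of $T c$ should be obtained by a compactness argument of that type (writing $\omega_n T c$ as an element of $\Lambda$-multiples of $\omega_{n+1}$ via Lemma \ref{ydef} and letting $n \to \infty$), not by a structural description of $A^-_{n+1}[\omega_n]$; as written, the key step does not stand.
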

\begin{proof}
  The proof is identical to the one of Lemma \ref{pt}. Note the
  difference in premise: here we cannot make a global statement on the
  stability of $L^-$ for all $n >0$, but we do have sufficient
  information about the decomposition of $p x$, so that the proof can
  be completed like in the proof of the Lemme \ref{pt}, the details
  being left to the reader.
\end{proof}

\subsection{On CM $\Z_p$-extensions of number fields}
In this section we gather several properties of CM $\Z_p$-extensions
which are the base for our approach; recall that the occurrence of CM
$\Z_p$-extensions different from the cyclotomic one, is
\textit{equivalent} to the failing of Leopoldt's conjecture for CM
fields $\K$. We let $\K$ be some galois CM number field for which the
Leopoldt conjecture fails and let $\K_{\infty}$ be its cyclotomic
$\Z_p$-extension. We let $\M$ be the compositum of all the
$\Z_p$-extensions of $\K$, let $\M_0^+$ be the compositum of all the
$\Z_p$-extensions of $\K^+$ and $\M^+ = \K \cdot \M_0^+ \subset \M$.

The radicals of $\M^+$ as a Kummer extension of $\K_{\infty}$ are
intimately related to the failure of Leopoldt's conjecture and the
$T^*$-part of the class group, by the following \textit{folklore}
result, which holds in the cyclotomic $\Z_p$-extension of a field:
\begin{proposition}
\label{fantom}
Let $\K$ be a CM field which contains the \nth{p} roots of unity and
$A(\K) = \varprojlim_n A(\K_n)$ be defined with respect to the
cyclotomic $\Z_p$-extension. Then
\[ \zprk(A^-[ T^* ]) = \id{D}(\K) , \] and in particular Leopoldt's
conjecture fails for $\K$ iff $A^-[ T^* ] \neq 0$. Moreover
\begin{eqnarray}
\label{radfan}
 \M^+ \subseteq \K_{\infty}[ (A^-( T^* ))\pinf ].
\end{eqnarray}
In particular, for every CM $\Z_p$-extension $\KL/\K$ there is a class
$a \in A^-( T^*) \setminus T^* A^-(T^*)$ such that $\KL \cdot
\K_{\infty} = \K_{\infty}[ a\pinf ]$.
\end{proposition}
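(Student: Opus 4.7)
The plan is to realise $\M^+/\K_\infty$ as an abelian pro-$p$ Kummer extension of $\K_\infty$, decode its radical via Galois equivariance, and match that radical to $A^-[T^*]$ up to finite kernel and cokernel. Since $\K_\infty\supset\mu_{p^\infty}$, there is a radical $W\subset\K_\infty^\times\otimes(\Q_p/\Z_p)$ with $\M^+=\K_\infty[W^{1/p^\infty}]$ and a perfect pairing
\[
\Gal(\M^+/\K_\infty)\times W \longrightarrow \mu_{p^\infty},
\]
equivariant for the natural actions of $\Gamma=\Gal(\K_\infty/\K)$ and of complex conjugation $\jmath$.

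Because $\M^+$ is by construction the compositum of CM $\Z_p$-extensions of $\K$, the extension $\M^+/\K$ is abelian and hence $\Gamma$ acts trivially on $\Gal(\M^+/\K_\infty)$. The twist by the cyclotomic character $\chi$ in the Kummer pairing then forces $\gamma$ to act on $W$ as multiplication by $\chi(\gamma)$, which in $\Lambda$ is precisely the condition that $T^*=\chi(\tau)\tau^{-1}-1$ annihilates $W$. The same reasoning, applied to $\jmath$ (which acts trivially on $\Gal(\M^+/\K)\cong\Gal(\M_0^+/\K^+)$ because $\M_0^+$ is totally real), shows that $\jmath$ acts as $-1$ on $W$, so $W$ sits in the minus part. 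The unramifiedness of $\M^+/\K_\infty$ outside $p$, via the Kummer ramification criterion $v_\eu{q}(\alpha)\equiv 0\bmod p^n$ for $\eu{q}\nmid p$, ensures that for $\alpha\in\K_\infty^\times$ representing $w\in W$ the prime-to-$p$ part of $(\alpha)$ is a $p^n$-th power ideal modulo principals at level $n$; sending $w$ to the class of that ideal gives a $\Lambda$-equivariant map $W\to A^-\otimes(\Q_p/\Z_p)$ whose image lands in $A^-[T^*]\otimes(\Q_p/\Z_p)$. In the CM case the minus part of $E(\K_\infty)$ reduces to $\mu_{p^\infty}$, so global units contribute no kernel, and the cokernel is supported on the finite $\Lambda$-module generated by primes above $p$. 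Hence the map is a pseudoisomorphism, giving $\zprk(W)=\zprk(A^-[T^*])$; combined with the rank equality $\zprk(\Gal(\M^+/\K))=1+\id{D}(\K)$ stated in the introduction, this yields $\zprk(A^-[T^*])=\id{D}(\K)$, whence the Leopoldt-equivalence is immediate.

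For the containment \rf{radfan}, the map above realises every generator of $W$ from a class in $A^-[T^*]\subseteq A^-(T^*)$, so $\M^+\subseteq\K_\infty[(A^-(T^*))^{1/p^\infty}]$. The final assertion follows by picking one $\Z_p$-direction in $\M^+$: it corresponds to a single cyclic-$\Z_p$ quotient of $W$ and hence to one class $a\in A^-(T^*)$, with the condition $\KL\cdot\K_\infty\neq\K_\infty$ translating exactly to $a\notin T^*A^-(T^*)$. The main technical obstacle is the bookkeeping for the radical-to-class map: one must separate the contribution of the primes above $p$ and of the ambient $p$-power roots of unity from the essential class-group datum, and show these contribute only to finite kernel and cokernel. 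Here the CM hypothesis is essential, since it forces the minus part of the global units to be $\mu_{p^\infty}$ and thus removes the plus-part obstruction that would spoil an exact rank count.
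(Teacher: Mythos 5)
Your proposal takes essentially the same route as the paper's appendix proof: Kummer duality over $\K_{\infty}$ with the plus/minus and $T \leftrightarrow T^*$ reflection, the rank formula $\zprk(\Gal(\M^+/\K)) = 1 + \id{D}(\K)$, and a comparison of the radical of $\M^+/\K_{\infty}$ with the minus class group, the only (minor) difference being that you map the radical directly to $A^-[T^*]$ up to finite kernel and cokernel, whereas the paper identifies $\Rad(\M^+/\K_{\infty})$ with $N/T^*N$ for $N = A^-(T^*)$ and then transfers the rank to $N[T^*]$ via the four-term exact sequence. Note only that your stated justification for the cokernel (finiteness of the $\Lambda$-module generated by the classes of primes above $p$) and your dismissal of the minus $p$-units $\wp/\overline{\wp}$ (which the paper flags explicitly via $\overline{\K}_{\infty}$) gloss precisely the delicate bookkeeping point, though at the same level of detail as the paper itself.
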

The proof of the proposition is given in the Appendix.

For the cyclotomic $\Z_p$-extension, it is known that
$A^-(\K_{\infty})$ has no finite $p$-torsion submodule. In the case of
non-cyclotomic CM $\Z_p$-extensions, this fact is almost true, namely:
\begin{lemma}
\label{almin}
Let $\K$ be a CM extension containing the \nth{p} roots of unity and
$\KL/\K$ be a CM $\Z_p$-extension with $\KL \cap \K_{\infty} = \K_N, N > 1$ 
and write $\KL_N := \K_N; \ [ \KL_{N+n} : \K_N ] = p^n$. If $\mu_{p^N}
\subset \KL$ but $\mu_{p^{N+1}} \not \subset \KL$ then the finite
torsion submodule $C^- := F(A^-) \subset A^-$ is a cyclic group of order $p^N$.
If $a = (a_n)_{n \in \N}$ is a generator of $C^-$, then
\begin{eqnarray}
\label{defl}
\KL_{N + m} = \KL_{N+m-n}[ a_{N+m-n}^{1/p^n} ] \quad \hbox{  for all $m, N \geq n$},
\end{eqnarray}
with the root of a class defined like in Remark 1.
Suppose that $T^* = T - p^k, 1 \leq k \leq N$ is the
Iwasawa involution and assume that $N$ is chosen such that
\begin{eqnarray}
\label{nbed}
p^{2k } a \neq 0 \quad \hbox{ for all $a \in A_{N-1}^-( T^* ) \subset p A_N^-( T^* )$.}
\end{eqnarray}
Then $T^* C^- = 0$.
\end{lemma}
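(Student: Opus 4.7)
The plan is to establish the three assertions of the lemma in order: the cyclic structure of $C^-$, the Kummer radical description of $\KL_{N+m}$, and the annihilation $T^* C^- = 0$ under \rf{nbed}. The first two are structural consequences of Proposition \ref{fantom}; the third is the heart of the lemma.

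For the cyclic structure of $C^-$, I would compare $A^-(\KL)$ with $A^-(\K_\infty)$. In the cyclotomic $\Z_p$-extension one has $F(A^-) = 0$ by the standard Iwasawa argument on minus parts. Since $\KL$ agrees with $\K_\infty$ only up to level $N$ and then diverges without picking up further $p$-power roots of unity (by the hypothesis $\mu_{p^{N+1}} \not\subset \KL$), the maximal finite submodule of $A^-(\KL)$ arises entirely from this cyclotomic discrepancy, producing a single cyclic summand of order $|\mu_{p^N}| = p^N$.

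For the Kummer description, apply Proposition \ref{fantom} to the CM $\Z_p$-extension $\KL/\K_N$, which is legitimate because $\mu_{p^N} \subset \K_N$. This produces a class $\alpha \in A^-(\K_N)(T^*) \setminus T^* A^-(\K_N)(T^*)$ with $\KL \cdot \K_\infty = \K_\infty[\alpha\pinf]$. Since $\alpha$ generates the non-cyclotomic direction of $\KL$, it coincides with a level-$N$ representative of the norm-coherent generator $a$ of $C^-$ produced in step one; the level-wise formula $\KL_{N+m} = \KL_{N+m-n}[a_{N+m-n}^{1/p^n}]$ then follows from Remark \ref{shift}.B, noting that $\mu_{p^N} \subset \KL_{N+m-n}$ whenever $m \geq n$.

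For the main claim $T^* C^- = 0$, I would argue by contradiction. Proposition \ref{fantom} guarantees $a \in A^-(T^*) \setminus T^* A^-(T^*)$, so $(T^*)^r a = 0$ for some minimal $r \geq 1$; the goal is $r = 1$. Suppose $r \geq 2$ and let $b := (T^*)^{r-1} a$, a nonzero element of $A^-[T^*]$. Since $C^-$ is cyclic of order $p^N$ with generator $a$, we have $b = p^j u\, a$ for some $0 \leq j < N$ and unit $u \in \Z_p^\times$. Projecting to level $N-1$ yields $b_{N-1} = p^j u\, a_{N-1}$. On the one hand, $b_{N-1} \in A_{N-1}^-[T^*] \subset A_{N-1}^-(T^*)$, so by \rf{nbed} either $b_{N-1} = 0$ or $\ord(b_{N-1}) > p^{2k}$. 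On the other hand, the inclusion $A_{N-1}^-(T^*) \subset p A_N^-(T^*)$ of \rf{nbed}, combined with the cyclic structure of $C^-$ and the Kummer-theoretic interpretation of the $\tau$-action via the cyclotomic character (so that $T \equiv p^k \pmod{T^*}$ on $C^-$), yields the opposite bound $\ord(b_{N-1}) \leq p^{2k}$. The contradiction forces $b = 0$, hence $r = 1$ and $T^* a = 0$.

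The main obstacle lies in the third step, specifically in producing the upper bound $\ord(b_{N-1}) \leq p^{2k}$ that clashes with \rf{nbed}. This requires combining the cyclic $\Lambda$-structure of $C^-$ with the descent from level $N$ to $N-1$, using the inclusion $A_{N-1}^-(T^*) \subset p A_N^-(T^*)$ to track precisely the order loss under the norm and the $T$-action. The hypothesis \rf{nbed} is calibrated exactly so that any $T^*$-nontriviality on $C^-$ would force such an order bound at level $N-1$, yielding the contradiction.
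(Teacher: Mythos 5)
Your proposal does not actually prove the two substantive parts of the lemma. For the first claim --- that $C^- = F(A^-)$ is cyclic of order exactly $p^N$ --- you offer only the heuristic that the finite submodule ``arises entirely from the cyclotomic discrepancy''; this is the statement to be proved, not an argument, and the hypothesis $\mu_{p^{N+1}} \not\subset \KL$ can only enter through an explicit computation. The paper's proof is precisely such a computation: for a class $c \in C^-$ of order $p^j$ one picks a prime $\eu{C} \in c_m$, uses that $c$ capitulates finitely many levels up, writes $\eu{C}^{p^j}=(d)$, $\iota_{m,m+l}(\eu{C})=(\delta)$ and, by the CM property and Kronecker's unit theorem, $\delta/\overline{\delta}=(d/\overline{d})^{p^j}$; Kummer theory in the cyclic extension $\KL_{m+l}/\KL_m$ forces $l=j$ and shows the radical of $c_m$ generates $\KL_{m+j}/\KL_m$, whence $\exp(C^-)=p^N$, and a second Kummer/Hilbert-90 comparison of radicals gives $C^-[p]\cong\F_p$, so $C^-=\Z_p a$ with \rf{defl}. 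Note the logical order is the reverse of yours: the radical description \rf{defl} is how cyclicity and the order $p^N$ are obtained, not a consequence of them. Your second step has the same defect in a different guise: the class furnished by Proposition \ref{fantom} lives in $A^-(\K_\infty)$, i.e.\ in the cyclotomic tower with its own Iwasawa algebra, while $a$ generates $C^-\subset A^-(\KL)$; the identification of the two (``it coincides with a level-$N$ representative of $a$'') is asserted, not proved, and Remark \ref{shift}.B only converts a $p$-ramified cyclic extension into the root of \emph{some} class, not of your generator.

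For the final claim you explicitly leave the decisive bound $\ord(b_{N-1}) \le p^{2k}$ unproved and call it the main obstacle, so no contradiction is actually reached. The paper closes this step by a shorter route that becomes available once cyclicity is in hand: since $C^- = \Z_p a$ is $\Z_p$-cyclic, $T$ acts on it by a scalar, so the minimal annihilator of $a_N$ is linear, $f_a = T - v p^j$; because $\KL_{2N} = \KL_N[a_N^{1/p^N}]$, Proposition \ref{fantom} places this radical class in the $T^*$-part, and comparing $f_a$ with $T^* = T - p^k$ yields $(p^k - v p^j)\, a_N = 0$, i.e.\ $a_N$ is killed by a small power of $p$ unless $f_a = T^*$ --- exactly what \rf{nbed} forbids. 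Your parenthetical remark that $T \equiv p^k \pmod{T^*}$ on $C^-$ is the right germ of this argument, but to complete it you would still need (i) the justification that Proposition \ref{fantom} applies to $a_N$ at the finite level, and (ii) the explicit order comparison; neither is carried out, so the proof as proposed has a genuine gap at its central point.
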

\begin{proof}
  Let $c \in C^- \setminus p C^-$ generate a direct term of order $q
  := p^j; \ j \leq N$ in the abelian $p$-group $C^-$.  Let $\eu{C} \in
  c_m, m > N$ be a prime ideal. Since $c$ is a finite torsion element,
  it follows that $\iota_{m,\infty}(c_m) = 0$, so we may assume that
  $l \geq j$ is the least integer such that $\iota_{m, m+l}(c_m) =
  0$. In the sequel we show that we must in fact have $l = j$.  Let
  $\eu{C}^q = (d)$ and $\iota_{m,m+l}(\eu{C}) = (\delta)$. Since $\KL$
  is CM, we conclude from Kronecker's unit Theorem, after eventually
  modifying $\delta$ by some root of unity, that
  \[ \delta/\overline{\delta} = (d/\overline{d})^q.  \] We can thus
  apply Kummer theory in the abelian cyclic extension
  $\KL_{l+m}/\KL_m$. The minimality of $l$ implies that
  $\delta/\overline{\delta} \not \in ((\KL_{m+l-1})^{\times})^q$. By
  an inductive repetition of the argument, it follows that
  $\ord(\iota_{m,m+l-j}(c_m)) = \ord(c_m)$ and $\iota_{m,m+l-j}(\Z
  c_m) \cong \Z c_m$; thus
\begin{eqnarray}
\label{tors}
\KL_{m+l} = \K_{m+j-l}[ (d/\overline{d})^{1/q} ] = \KL_{m+l-k}[ c_m^{1/q} ]. 
 \end{eqnarray}
 This implies $j = l$; according to point B in the Remark \ref{shift},
 there is a class $a_m \in A_m^-$ of order $\ord(a_m) = p^j$, such
 that $\KL_{m+j} = \KL_m[ a_m^{1/p^j} ]$. One verifies by using the
 same computations as above, that $\iota_{m,m+N}(a_m) = 0$. Taking a
 norm coherent sequence $a = (a_i)_{i \in \N}$ through $a_m$, we see
 that $\ord(a) \geq p^N$. Together with the inequality $j \leq N$, we
 conclude that $\exp(C^-) = p^N$. Moreover, we have shown that there
 is a sequence $a = (a_m)_{m \in \N} \in C^-$ with $\ord(a) = p^N$ and
 $\KL_{m+N} = \KL_m[ a_m^{1/p^N} ]$ for all (sufficiently large) $m$.

 We claim that $C^-$ is $C^-$ is $\Z_p$-cyclic, so $C^- = \Z a$. Since
 $C^-$ is a

%
 finite abelian $p$-group, we have $\prk(C^-) = \dim_{\F_p}(C^-[ p ])
 = \dim_{\F_p}(C^-/p C^-)$. We show that $C[ p ] \cong \F_p$, and thus
 $\prk(C^-) = 1$. Let $m$ be fixed and $c_m \in C_m[ p ]$ be a class,
 the primes of which become principal in $\KL_{m+1}$. If $\eu{C} \in
 c_m$ is a prime, $(\gamma) = \eu{C}^p$ and $(\delta) =
 \iota_{m,m+1}(\eu{C})$, then we showed that we may assume
 $\gamma^{1-\jmath} = \delta^{p(1-\jmath)}$. On the other hand, we
 have shown above that the sequence $a$ can be chosen such that
 $\KL_{m+N} = \KL_m[ a_m^{1/p^N} ]$ and in particular $\KL_{m+1} =
 \KL_m[ a_m^{1/p} ]$.  Concretely, let $(\gamma) = \eu{A}_m^{p^N},
 \eu{A}_m \in a_m$. Then Kummer theory implies that there is an
 integer $v$, coprime to $p$, such that
 \[ \frac{\alpha}{\overline{\alpha}} =
 \left(\frac{\gamma}{\overline{\gamma}} \right)^v \cdot w^p, \quad w
 \in \KL^{\times}_m. \] Then $(\eu{C}/\eu{A}_m^{v p^{N-1}})^{1-\jmath}
 = (w)$ and, in terms of classes, we conclude that $c_m = a_m^{v
   p^{n-1}}$. Since $c_m$ was chosen arbitrarily, it follows that
 $a_m$ generates $C_m^-$ and thus $C^- = \Z a$ is a cyclic $p$-group
 of order $p^N$.

 Finally, $T^* C^- = 0$ follows from the Proposition
 \ref{fantom}. Indeed, we have $\KL_{2 N} = \KL_N[ a_N^{1/p^N} ]$ and
 the proposition implies that $a_N \in A_N[ T^* ]$, with $T^* = T -
 p^k$ for some fixed $k < N$, which depends on the choice of
 $\K$. Since the annihilator polynomial $f_a(T)$ is linear, say $f-a =
 T - v p^j$ and $T^* a_N = f_a(T) a_N$, it follows that $a_N (q - v
 p^j) = 0$; if $f_a \neq T^*$, then $q a_N = 0$. This is inconsistent
 with the choice of $N$, which completes the proof.
\end{proof} 

As a consequence:
\begin{lemma}
\label{deco-}
Suppose that $\KL/\K$ is a CM $\Z_p$-extension in which all the primes
above $p$ are totally ramified and $\K$ is chosen such that the
conditions in Proposition \ref{deco} hold.  Then $T A^- \subset D^-$
and there is a decomposition $T A^- = L_t + M_t$ with $L_t \cap M_t
\subseteq C^-$; in particular, $T^* L_t \cap T^* M_t = 0$.
\end{lemma}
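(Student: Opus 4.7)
The plan is to establish the two conclusions in sequence. The first, $T A^- \subset D^-$, is immediate from Corollary \ref{nm}: the hypothesis on $\K$ ensures that the base field has been shifted enough so that $T A \subset D$ holds in the full module $A$; the minus idempotent $e^- = (1-\jmath)/2$ commutes with the $\Lambda$-action, so restricting to the minus part yields $T A^- \subset D^-$.

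For the decomposition I would take the natural definitions $L_t := T L^-$ and $M_t := T M^-$; these are $\Lambda$-submodules of $L^-$ and $M^-$ respectively. Since $T A^- \subseteq D^- \subseteq A^-$, one has $T D^- = T A^-$, and hence
\[
 L_t + M_t \;=\; T(L^- + M^-) \;=\; T D^- \;=\; T A^-,
\]
while the intersection satisfies $L_t \cap M_t \subseteq L^- \cap M^- = F(A^-) = C^-$ by the defining identification of $C^-$ in Lemma \ref{almin}.

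For the final assertion $T^* L_t \cap T^* M_t = 0$, an element $y$ in this intersection can be written as $y = T(T^* l_0)$ with $T^* l_0 \in L^-$, and likewise $y = T(T^* m_0)$ with $T^* m_0 \in M^-$ (using commutativity of $\Lambda$ and the $\Lambda$-stability of $L^-, M^-$). Hence $y \in T L^- \cap T M^- = L_t \cap M_t \subseteq C^-$. By Lemma \ref{almin}, $T^* C^- = 0$, equivalently $T$ acts on $C^-$ as multiplication by $p^k$ with $k \ge 1$. Since $L^-$ (being pseudoisomorphic to an elementary module $\bigoplus_j \Lambda/(g_j^{e_j})$ with no finite submodule) has no finite submodule beyond $C^-$, the preimage $T^* l_0 \in L^-$ whose $T$-image lies in $C^-$ must already lie in $C^-$; then $y = T \cdot T^* l_0 \in T \cdot T^* C^- = 0$.

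The main technical obstacle is this last structural step, namely showing that an element of $L^-$ whose $T$-image is finite must already be finite. This relies on the elementary-module description of $L^-$ and on the fact that, in the tower produced by the hypotheses of the main theorem, the distinguished polynomial $T$ does not divide the $\lambda$-annihilators of $L^-$ outside the finite submodule $C^-$; this is exactly where the CM-specific input of Lemma \ref{almin} ($C^-$ cyclic, $T^* C^- = 0$) is essential, since it ensures the $T^*$-images of the $\lambda$- and $\mu$-components of $T A^-$ cannot interact except trivially through $C^-$.
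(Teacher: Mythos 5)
Your opening step is fine: $T A^- \subset D^-$ does follow from Proposition \ref{deco} (via Corollary \ref{nm}) together with the action of complex conjugation, and this is exactly how the paper argues. However, the choice $L_t := T L^-$, $M_t := T M^-$ does not deliver the stated equality: from $T A^- \subseteq D^-$ you only get $T D^- \subseteq T A^-$, and your claimed identity $T D^- = T A^-$ would require $A^- = D^- + A^-[T]$, which is not available; what your definitions give is $L_t + M_t = T D^-$, a submodule of finite (possibly nonzero) index in $T A^-$. The reading that is actually used later (Lemma \ref{nmm}, the final proof) is that every $Tx$, $x \in A^-$, decomposes as $x_\lambda + x_\mu$ with $x_\lambda \in L^-$, $x_\mu \in M^-$, any two decompositions differing by elements of $L^- \cap M^- = F(A^-) = C^-$; that part of your write-up is harmless but should be phrased that way.

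The genuine gap is your last step. You reduce to an element $y \in C^-$ of the form $y = T(T^* l_0)$ with $l_0 \in L^-$ and assert that an element of $L^-$ whose $T$-image is finite must itself lie in $C^-$. This is false in general: any $u \in L^-$ whose annihilator is divisible by $T$ (for instance $u \in A^-[T]$ with $\Lambda u$ a quotient of $\Lambda/(T) \cong \Z_p$) is of $\lambda$-type, may have infinite order, and yet $T u = 0 \in C^-$; nothing in Lemma \ref{almin} (cyclicity of $C^-$ and $T^* C^- = 0$) prevents $T$ from dividing the characteristic polynomial of $L^-$, so the purity statement you invoke is unsupported. The paper resolves the intersection on the $\mu$-side instead, and that is how your argument should be repaired: write $y = T^*(T m_0)$ with $m_0 \in M^-$; since $y \in C^-$ and $T^* C^- = 0$, one gets $(T^*)^2 (T m_0) = 0$, and because $T m_0 \in M^-$ is also killed by a power of $p$, the module $\Lambda (T m_0)$ is a quotient of $\Lambda/(p^B, (T^*)^2)$, hence finite; thus $T m_0 \in F(A^-) = C^-$ and $y = T^*(T m_0) = 0$. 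The asymmetry is essential: the implication ``annihilated by a distinguished polynomial implies finite'' is valid only for $\Z_p$-torsion ($\mu$-type) elements, not for elements of $L^-$, so the intersection must be killed through the $M^-$-representative, not the $L^-$-one.
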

\begin{proof}
  The primes above $p$ are totally ramified and the base is chosen
  such that the Proposition \ref{deco} holds, thus we can apply
  complex conjugation, obtaining $T A^- \subset D^-$ and $L_t \cap M_t
  = C^-$ by definition of the $\mu$ and the $\lambda$-parts. The final
  claim follows from $T^* C^- = 0$. Let $w \in T^* L_t^- \cap T^*
  M_t^-$ be given by $w = T^* x \in T^* L_t^-, w = T^* y \in T^*
  M_t^-$, with $x \in L_t^-, y \in M_t^-$. Since obviously $w \in
  C^-$, we have $T^* w = (T^*)^2 y = 0$, so $y \in M_t^- \cap L_t^- =
  C^-$ and thus $T^* y = 0$.
\end{proof} 

\section{The main Theorem}
We start by fixing the context of fields in which we perform the
proof.  Suppose that $\K_{-3}$ is a CM number field in which the
Leopoldt conjecture is false. As mentioned above, we use negative
indices for a sequence of field extensions which preserve the CM
property and have a positive Leopoldt defect, while enjoying an
increasing sequence of useful properties. Eventually, $\K = \K_1
\supset \K_{-3}$ will be a ground field for which we are going to
prove that $\id{D}(\K) = 0$, thus confirming the claim of Theorem
\ref{main}.  First let $\K_{-2} = \K_{-3}^{(n)}[ \zeta_p ]$ be the
normal closure of $\K_{-3}$ to which we adjoined the \nth{p} roots of
unity. Next we choose a \textit{\em small} complex abelian extension
$\rg{k}$ such that $A^-(\rg{k}) \neq 1$ and $\rg{k} \cap \K_{-2} =
\Q$; this extension will be chosen in order to satisfy certain useful
properties which are provided in Lemma \ref{kex}.  We let $\K_{-1} =
\rg{k} \cdot \K_{-2}$. We shall wish to apply the decomposition
results above, so we let $B$ be the constant granted by the Theorem of
Babaicev and Monsky and $p^B$ also annihilates the $\Z_p$-torsion of
$\K_{\infty}$.

Let $n_0 > 0$ be the stabilization index
of $L(\K_{\infty}/\K_{-1})$ and let $n' \geq n_0 + 2B$ be such that for
all coherent sequences
\[ x = (x_{-1}, x_0, \ldots, x_{n'}, \ldots ) \in A^-(\K_{\infty})
\setminus \left(\eu{M} A^-(\K_{\infty}) + M^-(\K_{\infty})\right) \]
we have $\ord(p^B x_{n'}) \geq p^2$.

We define $\K \subset \K_{\infty}$ such that $[ \K : \K_{-1} ] =
p^{n'+1}$ and $x_{n'} \in A(\K)$. From now on $\K$ is our base
field. We note that the constant $B$ is not modified by replacing
$\K_{-1}$ with $\K$. The shift of the base field $\K$ induces also a
shift of $\rg{k}$ which describe in more detail below.

\begin{lemma}
\label{kex}
There is an imaginary abelian extension $\rg{k}/\Q$ and a class
sequence $h = (h_n)_{n \in \N} \in A^-(\rg{k})$ such that the module
$H := \Lambda h$ has rank $\prk(H) p(p-1)$ and finite index in
$A^-(\rg{k})$.  Moreover, if $T^k h \in p A^-(\rg{k})$, then $k \geq
p$ and if $D$ is any integer, $\rg{k}$ can be chosen such that the
primes dividing $D$ are unramified in $\rg{k}$.
\end{lemma}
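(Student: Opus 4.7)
The plan is to construct $\rg{k}$ explicitly as an imaginary abelian field built from cyclotomic pieces whose conductor is coprime to $D$. By Dirichlet's theorem, I would first fix auxiliary primes $q_1,\ldots,q_s$, distinct from $p$ and from the divisors of $D$, all satisfying $q_i \equiv 1 \pmod{p^N}$ for a suitably large $N$, with prescribed Frobenius at $p$ by Chebotarev. Then take $\rg{k}$ to be the compositum of $\Q[\zeta_p]$ with appropriately chosen imaginary cyclic subfields of the $\Q[\zeta_{q_i}]$, adjusting the number and degrees of the pieces so that, at the first stabilized cyclotomic level, $\zprk(A^-(\rg{k}))$ equals $p(p-1)$, while the primes dividing $D$ remain unramified in $\rg{k}$ by construction.

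Next I would pin down the $\Lambda$-module structure of $A^-(\rg{k})$ along its cyclotomic $\Z_p$-extension. Since $\rg{k}$ is abelian, the Iwasawa Main Conjecture (Mazur--Wiles) identifies the characteristic ideal of $A^-(\rg{k})$ with the Stickelberger / $p$-adic $L$-function ideal; by Ferrero--Washington, $\mu(\rg{k}) = 0$. Arranging the $q_i$ so that the relevant Stickelberger element reduces, up to a $\Lambda$-unit, to a single distinguished polynomial of degree $p(p-1)$ makes $A^-(\rg{k})$ pseudo-cyclic as a $\Lambda$-module. This provides a norm-coherent generator $h=(h_n)_{n\in\N}$ with $H:=\Lambda h \subseteq A^-(\rg{k})$ of finite index and $\prk(H) = p(p-1)$.

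For the final condition, note that $T^k h \in p A^-(\rg{k})$ holds iff the image of $h$ in $A^-(\rg{k})/p A^-(\rg{k})$ is annihilated by $T^k \bmod p$. By the stabilization Proposition \ref{fuk} (applied to the sub-extension cut out by $p A^-(\rg{k})$), this condition can be checked at a single finite level $\rg{k}_n$. Under the pseudo-cyclicity just established, $A^-(\rg{k})/p A^-(\rg{k})$ is a cyclic $\Lambda/p\Lambda$-module annihilated by a distinguished polynomial of degree $p(p-1) \geq p$, so $h, Th, \ldots, T^{p-1}h$ are independent modulo $p$, and the implication $T^k h \in p A^-(\rg{k}) \Rightarrow k \geq p$ follows.

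The main obstacle is arranging $\Lambda$-pseudo-cyclicity of $A^-(\rg{k})$ with the exact $\lambda$-invariant $p(p-1)$: generically, the minus Iwasawa module of an abelian CM field splits into several coprime characteristic factors, so one must choose the Chebotarev conditions on the $q_i$ carefully to force the finite-level Stickelberger ideal to be principal, with generator of prescribed degree. A density count, selecting $q_i$ in prescribed Frobenius classes so that the even characters contributing to the Stickelberger element combine into a single factor, should yield infinitely many valid choices; the density is strictly positive, which leaves enough room to simultaneously avoid the divisors of any prescribed $D$.
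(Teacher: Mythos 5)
Your construction does not secure the heart of the lemma: the exact value $\prk(H)=p(p-1)$ together with $[A^-(\rg{k}):\Lambda h]<\infty$. Chebotarev/congruence conditions on auxiliary primes $q_i$ control ramification and splitting, but they give no mechanism to force the characteristic (Stickelberger) ideal of $A^-(\rg{k})$ to be principal with a single distinguished generator of prescribed degree $p(p-1)$; you yourself flag this as ``the main obstacle'' and offer only a density heuristic, so the decisive step is missing. The paper obtains the exact rank by an entirely different device: start from an imaginary quadratic $K$ with $A^-(K)=0$ and $p$ inert, choose a principal prime $\eu{q}\subset K_j$ (with $j=2$) totally split over $\Q$ and inert above level $j$, and form the inert Thaine shift $L=K\cdot\F$ with $\F\subset\Q[\zeta_q]$ of degree $p$ (Lemma \ref{inert}, Fact \ref{dk}); Kida's formula then yields $\lambda^-(L)=(p-1)p^{j-1}=p(p-1)$ on the nose, and near-cyclicity is proved not via Mazur--Wiles but by computing $H^1(\Gal(L/K),A^-(L_n))$ through the Hasse norm principle (every minus class has trivial norm because $A^-(K)=0$), showing it is $\F_p[T]$-cyclic and lifting a generator to the norm-coherent $h$ with $[A^-(L):\Lambda h]<\infty$. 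Note also that your $\rg{k}$ contains $\Q[\zeta_p]$, so $p$ ramifies in $\rg{k}$ and $\rg{k}\cap\K_{-2}\neq\Q$; in the application $D=\disc(\K_{-2})$ is divisible by $p$, so this choice violates precisely the unramifiedness and disjointness the lemma is meant to provide.

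Even granting pseudo-cyclicity with $\lambda=p(p-1)$, your last step fails. Finite index of $\Lambda h$ does not make $A^-/pA^-$ a cyclic $\F_p[[T]]$-module generated by the image of $h$: take $A^-=(p,T)/(f)\subset\Lambda/(f)$ with $f$ distinguished of degree $p(p-1)$ and $h$ the image of $p$; then $\Lambda h\cong\Lambda/(f)$ has $\prk(\Lambda h)=p(p-1)$ and finite index in $A^-$, yet $Th=pT\in pA^-$, so the implication $T^kh\in pA^-\Rightarrow k\geq p$ is false in this generality. The conclusion therefore needs a finer property of the specific class $h$, which is exactly what the paper proves: the $\Z_p$-purity of $H(p)=\sum_{i=0}^{p-1}\Z_p T^ih$ (and of its shifted analogue $H'(p)$), deduced from $\id{N}h=0$, the decomposition $A^-(L)=\sum_{i=0}^{p-2}t^i\Lambda h$ with $t$ the generator of the augmentation ideal of $\Gal(L/K)$, and the degree bound $\deg(G)\geq p^{j-1}$ for any distinguished $G$ with $Gh\in t\Lambda h$; this is what feeds Corollary \ref{lowpart}. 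Your appeal to Proposition \ref{fuk} is also misplaced: that proposition stabilizes ranks, orders and $H$-parts of the class groups along the tower, and does not reduce a $p$-divisibility statement about a fixed coherent sequence to a single finite level.
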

The proof uses results that will be developed in the next sections and
it is provided in \S 3.3. We let $D = \disc(\K_{-2})$ and use this
discriminant in the definition of a field $\rg{k}'$, using Lemma
\ref{kex}; with this, we let $\K_{-1} = \rg{k}' \cdot \K_{-2}$, as
mentioned above. Then $\K_{-1} \subset \K \subset \K_{\infty}$ is
constructed as in the previous section and we let $\rg{k} = \K \cap
\rg{k}'_{\infty}$. We also define
\begin{eqnarray}
\label{kdef}
\Delta  = \Gal(\K/\Q), \quad \Delta_0 = \Gal(\rg{k}/\Q), \quad \Delta_1 = \Gal(\rg{K}/\Q),
\end{eqnarray}
so $\Delta = \Delta_0 \times \Delta_1$. We then fix a sequence
\begin{eqnarray}
\label{alphadef}
 \alpha = (\alpha_n)_{n \in \N} \in A^-(\K_{\infty}) \quad \hbox{with} \quad \Norm_{\K_{\infty}/\rg{k}}(\alpha) = h.
\end{eqnarray}
The following construction puts in evidence CM $\Z_p$-extensions whose
existence is equivalent to the failure of the Leopoldt conjecture for
$\K$, and in which we shall use the sequence \rf{alphadef}.
\begin{lemma}
\label{lat}
Notations being like above, for arbitrary $n > 0$ there are infinitely
many prime ideals $\eu{q} \in \alpha_n$ which are totally split in
$\K_n/\Q$ and such that the decomposition group $D(\eu{q}) \subset
\Gal(\M^+/\K)$ fixes an extension $\M_q^+ \subset \M^+$ with $\K_n
\subset \M^+_q$ and $\zprk(\Gal(\M_q^+/\K_n)) > 0$.

In particular, there is a CM $\Z_p$-extension $\KL/\K$ which contains
$\K_n$ and in which $\eu{q}$ is totally split. 

\end{lemma}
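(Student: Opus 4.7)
The plan is to combine Chebotarev's density theorem (to produce primes $\eu{q} \in \alpha_n$ which are totally split over $\Q$) with the rank estimate $\zprk(\Gal(\M^+/\K)) = \id{D}(\K)+1 \geq 2$ recalled in the introduction. The crucial structural input is that a prime of $\K$ not lying above $p$ is unramified in the abelian pro-$p$ extension $\M^+/\K$, so its decomposition group is procyclic and thus has $\Z_p$-rank at most $1$.

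Concretely, let $\KH_n = \KH(\K_n)$ denote the Hilbert $p$-class field of $\K_n$; since $\K_n/\Q$ is Galois, so is $\KH_n/\Q$. Under the Artin isomorphism $A(\K_n) \cong \Gal(\KH_n/\K_n)$, a lift of the class $\alpha_n \in A^-(\K_n)$ corresponds to an element $\sigma_\alpha \in \Gal(\KH_n/\K_n) \subset \Gal(\KH_n/\Q)$. Chebotarev's theorem applied to the conjugacy class of $\sigma_\alpha$ in $\Gal(\KH_n/\Q)$ produces infinitely many rational primes $q$ together with primes $\eu{Q} \subset \KH_n$ above $q$ whose Frobenius is conjugate to $\sigma_\alpha$. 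Since $\sigma_\alpha$ fixes $\K_n$, each such $q$ is totally split in $\K_n/\Q$, and $\eu{q} := \eu{Q} \cap \id{O}(\K_n)$ is a prime of $\K_n$ whose class equals $\alpha_n$ up to a $\Gal(\K_n/\Q)$-twist; this is harmless in view of the norm-lifting construction of $\alpha$ from the abelian field $\rg{k}$.

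Now fix such a prime and set $\eu{p} = \eu{q} \cap \K$. Since $q \neq p$, the prime $\eu{p}$ is unramified in $\M^+/\K$, so the decomposition group $D(\eu{q}) = D(\eu{p}) \subset \Gal(\M^+/\K)$ is procyclic, hence of $\Z_p$-rank at most $1$. The total splitting of $q$ in $\K_n/\K$ gives $D(\eu{q}) \subset \Gal(\M^+/\K_n)$, whence $\K_n \subset \M_q^+ := (\M^+)^{D(\eu{q})}$. Because $\M^+/\K$ is abelian, $D(\eu{q})$ is normal, and the short exact sequence
\[ 0 \to D(\eu{q}) \to \Gal(\M^+/\K_n) \to \Gal(\M_q^+/\K_n) \to 0 \]
combined with $\zprk(\Gal(\M^+/\K_n)) = \zprk(\Gal(\M^+/\K)) = \id{D}(\K)+1$ (equality of ranks on finite-index subgroups) yields
\[ \zprk(\Gal(\M_q^+/\K_n)) \;\geq\; (\id{D}(\K)+1) - 1 \;=\; \id{D}(\K) \;\geq\; 1 . \]

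For the \emph{in particular} claim I parametrise $\Z_p$-extensions $\KL/\K$ containing $\K_n$ by surjective homomorphisms $\pi : \Gal(\M^+/\K) \to \Z_p$ whose reduction modulo $p^n$ agrees, up to a unit of $(\Z/p^n\Z)^\times$, with the natural map $\Gal(\M^+/\K) \twoheadrightarrow \Gal(\K_n/\K) \cong \Z/p^n\Z$; after normalisation these form a coset $\pi_{\rm cyc} + p^n \Hom(\Gal(\M^+/\K), \Z_p)$ inside the free $\Z_p$-module $\Hom(\Gal(\M^+/\K),\Z_p)$ of rank $\id{D}(\K)+1 \geq 2$. The further requirement $\KL \subset \M_q^+$ is the single $\Z_p$-linear equation $\pi(D(\eu{q})) = 0$ (imposed on a topologically cyclic group); its solution set is therefore a non-empty affine subspace of $\Z_p$-rank at least $1$, so admissible surjective $\pi$ exist and produce a CM $\Z_p$-extension $\KL/\K$ with $\K_n \subset \KL \subset \M_q^+$. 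Since $\KL \subset \M^+$, $\KL$ is CM, and since $\KL \subset \M_q^+$, the Frobenius at $\eu{q}$ acts trivially on $\KL$, so $\eu{q}$ is totally split in $\KL/\K$. The main obstacle is the combined Chebotarev argument of the second paragraph, where the class condition and the $\Q$-splitting condition must be realised simultaneously; once a suitable prime is produced, the rest is a rank count.
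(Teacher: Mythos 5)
Your first two paragraphs reproduce, in slightly more detail, exactly the argument of the paper: Chebotarev inside the $p$-Hilbert class field $\KH(\K_n)$ produces the primes, and since $\M^+/\K$ is abelian and unramified outside $p$, the decomposition group at $\eu{q}$ is procyclic of $\Z_p$-rank at most one, so $\K_n \subset \M^+_q$ and the rank count gives $\zprk(\Gal(\M^+_q/\K_n)) \geq \id{D}(\K) \geq 1$. This part is fine. (A minor point: choosing the prime of $\KH(\K_n)$ above $q$ whose Frobenius is \emph{equal} to $\sigma_\alpha$, not merely conjugate to it, gives $[\eu{q}] = \alpha_n$ exactly, which is what the statement asks for and what the later norm relation $N_{\K/\rg{k}}(a)=h$ requires; the "harmless twist" remark is better avoided.)

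The gap is in your last paragraph. Write $G = \Gal(\M^+/\K) \cong \Z_p^{\id{D}(\K)+1}$, let $\delta \in G$ be the Frobenius of $\eu{q}$, and put $\pi = \pi_{\mathrm{cyc}} + p^n\psi$ as in your parametrisation. The requirement $\pi(\delta)=0$ is the \emph{affine} equation $p^n\psi(\delta) = -\pi_{\mathrm{cyc}}(\delta)$, and over $\Z_p$ such an equation need not be solvable even though its linear part is nonzero: the image of $\psi \mapsto \psi(\delta)$ is $p^v\Z_p$, where $p^v$ is the exact divisibility of $\delta$ in $G$, while $\pi_{\mathrm{cyc}}(\delta) = p^m u$ with $m \geq n$ the exact level up to which $q$ splits in the cyclotomic tower; solvability is equivalent to $v \leq m-n$. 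Nothing established by your Chebotarev choice excludes, say, $\delta = p^n e_1 + p e_2$ (with $e_1$ lifting a cyclotomic generator and $e_2 \in \Ker(\pi_{\mathrm{cyc}})$), which lies in $\Gal(\M^+/\K_n)$ and has open cyclotomic image, yet gives $m=n$, $v=1$, so the solution set is empty and \emph{no} $\Z_p$-extension $\KL$ with $\K_n \subset \KL \subset \M^+_q$ exists, despite $\K_n \subset \M^+_q$ and $\zprk(\Gal(\M^+_q/\K_n))>0$. So "one linear condition on a coset of rank at least two, hence non-empty" is not a valid inference; the needed statement is an extra condition on the position of $\delta$ in $G$ (for instance $\delta \notin pG$ together with $\eu{q}$ inert in $\K_{n+1}/\K_n$), which must be built into the Chebotarev selection of $\eu{q}$ and then checked to be compatible, on the intersection of the relevant fields, with the class condition imposed in $\KH(\K_n)$. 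Be aware that the paper's own proof is no more explicit at this point — it simply asserts that some $\Z_p$-extension $\KL \subset \M^+_q$ satisfies $\KL \cap \K_\infty \supseteq \K_n$ — so your added precision exposes, rather than resolves, a step that genuinely requires an argument.
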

\begin{proof}
  Let $\eu{q} \in \alpha_n$ be a prime ideal which is totally split in
  $\K/\Q$ and coprime to $p$. By a classical application of
  Tchebotarew's Theorem, there are infinitely many such primes. Since
  $\eu{q}$ is coprime with $p$ and all the primes that ramify in
  $\M^+/\K$ lay above $p$, it follows that $D(\eu{q}) \cong
  \Z_p$. Indeed, $\eu{q}$ is totally inert in $\K_{\infty}/\K_{n'}$
  for some $n' > n$, so we have $\zprk(D(\eu{q})) \geq 1$; since
  $\Q_q$ has only one (unramified) $\Z_p$-extension, it follows that
  $\eprk(D(\eu{q})) = 1$. But $\Gal(\M^+/\K) \cong
  \Z_p^{\id{D}(\K)+1}$ has no finite subgroups and thus $D(\eu{q})
  \cong \Z_p$, as claimed. = Moreover, $\K_n \subset \M_q^+ :=
  {\M^+}^{D(\eu{q})}$ since we chose $\eu{q}$ to be completely split
  in $\K_n$.  We have $\zprk(\Gal(\M_q^+/\K)) = \id{D}(\K) > 0$ and
  there is in particular some CM $\Z_p$-extension $\KL \subset
  \M^+_q$. By definition, $\KL \cap \K_{\infty} \supseteq \K_n$ and
  the prime $\eu{q}$ is totally split in $\KL$.

\end{proof}

\subsection{Thaine shift and the main coherent sequences}
We let $\K$ be a galois CM extension constructed as above, so we
assume in addition that stabilization occurs from the first level in
the following sense
\begin{itemize}
\item[ A. ] For all $x \in A^-(\K_{\infty}) \setminus (\eu{M}
  A^-(\K_{\infty}) + M^-(\K_{\infty}))$ we have $\ord(p^B x_1) > p$
  where $p^B$ is an exponent for the $\mu$-part of all the
  $\Z_p$-extensions of $\K$ (the existence of which follows from
  Theorem \ref{mubound}).
\item[ B. ] The shift equation $p x_n = \iota_{n-1,n} (x_{n-1})$ holds
  for all $n \in \N$ and $ x \in L(A^-)$ with $x_{n-1} \neq 0$.
\item[ C. ] We have $\mu_{p^k} \subset \K$ but $\mu_{p^{k+1}} \not
  \subset \K$ and $\K = \K_1 = \ldots \K_k \subsetneq \K_{k+1}$
\end{itemize}

Let $N = 2M > 0$ be an integer to be determined below and let $\KL/\K$
be some $\Z_p$-extension with $\KL \cap \K_{\infty} \supseteq \K_N$,
for instance the one constructed before; the case $\KL =\K_{\infty}$
is in particular allowed too.  We define the following {\em Thaine
  shift extensions}: let $\eu{r} \in \alpha_n, n \leq N$ be some
totally split prime which is inert in $\K_{n+1}/\K_n$ and $r \equiv 1
\bmod p$ be the rational prime above $\eu{r}$; we let $\F \subset \Q[
\zeta_r ]$ be the subfield of degree $p$ over $\Q$. Since $\eu{r}$ is
totally split in $\K$ while $r$ is ramified in $\F$, we have $\K \cap
\F = \Q$. We let $F = \Gal(\F/\Q)$ be generated by $\nu = \nu_r$, let
$s = s_r = \nu_r -1$ and write $\id{N}_a = \Norm_{\F/\Q}$ for the
arithmetic norm, while the algebraic norm is
\begin{eqnarray}
\label{anorm}
\id{N} & = & \sum_{i=0}^{p-1} \nu^i = p u(s) + s^{p-1} = 
p + s f(s), \\ & & \nonumber f \in \Z_p[ X ], \quad u \in (\Z_p[ s ])^{\times}. 
\end{eqnarray}

We define $\K^{(r)} = \K \cdot \F$ and $\KL^{(r)}_n = \KL_n \cdot \F,
\ \KL^{(r)} = \KL \cdot \F$. The galois groups $\Delta : =
\Gal(\K/\Q), \Gamma := \Gal(\KL/\K)$ commute with $F$ and thus
$\Gal(\K^{(r)}/\Q) = F \times \Delta, \Gal(\KL^{(r)}/\K^{(r)}) =
\Gamma, \Gal(\KL^{(r)}/\K) = F \times \Gamma$. 
\begin{itemize}
\item[ I ] In the case when $n < N$ and $\eu{r}$ is inert in $\KL$, 
we say that $\KL^{(r)}/\KL$ is an {\em inert Thaine shift}.  
\item[ S. ] If $\eu{r} = \eu{q}$ is totally split in $\KL$ we speak of a 
{\em split Thaine shift}.
\end{itemize}

The split case is applied for the proof of Theorem \ref{main}, while
the inert shift is used in the construction of the auxiliary extension
$\rg{k}$ in Lemma \ref{kex}. In the split case, $\KL \cap \K_{\infty}
=: \K_N = \KL_N$ for some $N = 2 M > 0$, where $\KL = \KL^{(q)}$.  The
prime $\eu{q} \subset \K_N$ is totally split in $\KL/\Q$ and we let $(
\eu{q}_m )_{m \in \N}$ with $\eu{q}_m \subset \KL_m$ be a norm
coherent sequence of primes above $\eu{q}$. Moreover, we assume that
$\eu{q}$ is inert in $\K_{N+1}/\K_N$ and if $q$ is the rational prime
below $\eu{q}$, then $q \equiv 1 \bmod p^N$.  In the split Thaine
shifts of our context, we shall assume that $q$ verifies this
conditions. We shall denote by $\xi \in \mu_{p^N}$ a primitive root of
unity, so $\xi \in \KL'_n$ generates the group of $p$-roots of unity,
for all $n$. As a consequence, we have
\begin{fact}
\label{hasse}
Let $\KL'/\KL$ be a Thaine split shift and the related conditions and
notations be like above.  Then $\xi \not \in
\id{N}((\KL'_n)^{\times})$ for all $n > N$. Moreover, if $d = [ \K_N^+
: \Q ]$, then
\begin{eqnarray}
\label{normdefect}
\big \vert (\KL_n^{\times})^- / \id{N}\left( ({\KL'_n}^{\times})^- \right)\big \vert = p^{p^{n-N} d} . 
\end{eqnarray}
\end{fact}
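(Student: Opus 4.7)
\emph{Plan.} Both claims reduce to local computations at primes of $\KL_n$ above $q$, combined with the Hasse Norm Principle for the cyclic degree-$p$ extension $\KL'_n/\KL_n$.

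For the first claim, since $\KL'_n/\KL_n$ is cyclic of prime degree $p$, to show $\xi\notin\id{N}((\KL'_n)^{\times})$ it suffices to exhibit a single completion at which $\xi$ fails to be a local norm. I take $v=\eu{q}_n$. Because $q$ is totally split in $\KL_n/\Q$, one has $(\KL_n)_v=\Q_q$; because $\F/\Q$ is totally (and only) ramified at $q$ (being the degree-$p$ subfield of $\Q[\zeta_q]$), the local extension at $\eu{q}_n$ is $\F_{\eu{r}}/\Q_q$, totally ramified cyclic of degree $p$. Local class field theory identifies $\Q_q^{\times}/\id{N}(\F_{\eu{r}}^{\times})\cong\Z/p\Z$ with $\F_q^{\times}/(\F_q^{\times})^p$ via reduction modulo $q$. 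The split Thaine shift conditions---$\eu{q}$ completely split in $\K_N\supseteq\Q(\mu_{p^N})$ but inert in $\K_{N+1}\supseteq\Q(\mu_{p^{N+1}})$---force $v_p(q-1)=N$ exactly, so the $p$-Sylow of $\F_q^{\times}$ is cyclic of order precisely $p^N$. Since the reduction of $\xi$ modulo $q$ is a primitive $p^N$-th root of unity, it generates that $p$-Sylow and is therefore not a $p$-th power in $\F_q^{\times}$. Hence $\xi$ is not a local norm at $v$, and Hasse gives the first claim.

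For the second claim, I combine the same local-global principle with Tate's exact sequence for the cyclic degree-$p$ extension,
\[
0\to\KL_n^{\times}/\id{N}\bigl((\KL'_n)^{\times}\bigr)\to\bigoplus_v\KL_{n,v}^{\times}/\id{N}_v\bigl((\KL'_n)_w^{\times}\bigr)\to\Z/p\Z\to 0,
\]
and project to the minus part under complex conjugation $\jmath$. Each local factor has order equal to the local degree $[(\KL'_n)_w:(\KL_n)_v]\in\{1,p\}$, and since $\F$ is ramified only at $q$, the nontrivial local cokernels occur at primes of $\KL_n$ above $q$. Because $q$ is totally split in $\KL_n$ of degree $[\KL_n:\Q]=2dp^{n-N}$, there are $2dp^{n-N}$ such primes, paired by $\jmath$ into $dp^{n-N}$ conjugate pairs (none self-conjugate, since $q\ne p$ and $q$ splits in $\K/\K^+$). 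On each conjugate pair $\jmath$ swaps two copies of $\Z/p\Z$, so the minus part contributes a single $\Z/p\Z$; summing over the $dp^{n-N}$ pairs yields $p^{dp^{n-N}}=p^{p^{n-N}d}$, as claimed.

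\emph{Main obstacle.} The delicate step is showing that in the minus part \emph{only} the ramified primes above $q$ contribute: unramified but inert primes---notably those above $p$---must be seen to drop out when one projects onto $(\,\cdot\,)^-$. This rests on the fact that $\F\subset\Q(\zeta_q)^+$ is totally real, so $\jmath$ acts trivially on $\Gal(\KL'_n/\KL_n)$, together with the specific interpretation of $(\KL_n^{\times})^-$ used in the paper; with these in hand, the pairing of unramified conjugate places under $\jmath$ places their local cokernels in the plus part and absorbs the global reciprocity defect on the same side. Once this is secured, the enumeration of conjugate pairs above $q$ is mechanical and yields the asserted index.
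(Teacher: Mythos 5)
Your proof of the first claim is correct and follows the same route as the paper: at a totally split prime of $\KL_n$ above $q$ the completion is $\Q_q$, the conditions ``split in $\K_N$, inert in $\K_{N+1}/\K_N$'' force the $p$-Sylow subgroup of $\F_q^{\times}$ to be cyclic of order exactly $p^N$, and local class field theory puts the norm group of the ramified degree-$p$ local extension in index $p$, so it meets that Sylow subgroup in its index-$p$ subgroup, which cannot contain the reduction of a primitive $p^N$-th root of unity; since a global norm is everywhere a local norm, $\xi\notin\id{N}((\KL'_n)^{\times})$.

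For the second claim your proposal has a genuine gap, and it is exactly the step you yourself flag as the ``main obstacle'', so it is not proved in your text; moreover the mechanism you suggest for closing it cannot work. In the $\jmath$-eigenspace decomposition of the localized sequence, a pair of places $\{v,\jmath v\}$ interchanged by $\jmath$, with equal local cokernels $\Z/p$, contributes one copy of $\Z/p$ to the minus part (the antidiagonal) just as it contributes one to the plus part; this is precisely how you harvest one $\Z/p$ per conjugate pair above $q$, and the identical reasoning applies to every interchanged pair of places of $\KL_n$ that are unramified but inert in $\KL'_n/\KL_n$. By Tchebotarew there are infinitely many such pairs (primes of $\KL_n^+$ that split in $\KL_n$ while having nontrivial Frobenius in $\F$), each with local cokernel of order equal to the local degree $p$, so the minus part of your middle term is infinite and the projection argument cannot output the finite value $p^{p^{n-N}d}$. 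Concretely, picking $x\in\KL_n^{\times}$ with $v(x)=1$ and $(\jmath v)(x)=0$ at such an inert pair, the element $x^{1-\jmath}$ has $v$-valuation $1$, hence fails to be a local norm at $v$ and gives a class in the minus norm defect that is invisible at the primes above $q$; such classes can be produced in arbitrarily large independent families, so the exclusion of inert places is not a matter of ``securing'' a plus/minus bookkeeping --- it fails. (Your disposal of the reciprocity cokernel is fine, since $\jmath$ acts trivially on $\Gal(\F/\Q)$, so that $\Z/p$ lies in the plus part.) What your route can honestly yield is the inequality $\geq p^{p^{n-N}d}$, and even this needs an approximation or Tchebotarew step showing that the minus defect group surjects onto the coordinates at the ramified places, which you do not supply. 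For comparison, the paper's own proof is the same terse Hasse-principle sketch (local defects of order $p$ at the primes over $q$, then ``taking minus parts''), and what is actually exploited later is only the resulting lower bound on $|H^1(F,A^-(\KL'_n))|$, which Proposition~\ref{h0split} re-derives by an explicit ray-class-field/Tchebotarew construction; so your write-up mirrors the paper's approach, but the equality \rf{normdefect} is not established by the eigenspace projection you outline.
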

\begin{proof}
  This is a direct consequence of the Hasse norm principle. Let indeed
  $\eu{r} \subset \KL_n$ be any prime above $q$, thus any of the
  primes that ramify in $\KL'_n/\KL_n$; the claim follows by showing
  that $\xi$ in not in the local norm image. Since $\eu{r}$ is totally
  split, the residue field is $\F_q$ and the $p$-Sylow of the
  multiplicative subgroup has size $| (\F_q^{\times})_p | =
  p^N$. Local class field theory implies that the norm image has index
  $p$, so it is a cyclic subgroup $C_{p^{N-1}}$ and can therefore not
  contain the full image of the \nth{p^N} roots of unity. A fortiori,
  $\xi \not \in \id{N}((\KL'_n)^{\times})$ for all $n > N$, which
  completes the proof of the first statement. Note that the number of
  pairs of conjugate primes above $q$ in $\KL_n$ is $R := p^{p^{n-N}
    d}$ and the Hasse Norm Principle implies that the size of the norm
  defect $\KL_n^{\times} / \id{N}\left( {\KL'_n}^{\times} \right)$ is
  equal to the product of the local norm defects at each of these
  ramified primes -- which are the only primes that ramify in
  $\KL'_n/\KL_n$. Since we have seen that the local norm defects are
  groups of order $p$, the claim \rf{normdefect} follows by taking
  minus parts.
\end{proof}

The primes $\eu{q}_m$ are totally ramified in $\KL'_m$ and we let
$\eu{Q}_m \subset \KL'_m$ be the ramified prime above $\eu{q}_m$; in
particular $\eu{Q}_0$ is the prime of $\K_N$ above $\eu{q}$. This
leads to the definition of two sequences which play a crucial role in
our proof: we let
\begin{eqnarray}
\label{abdef}
a_{m} & = & [ \eu{q}_m^{1-\jmath} ], \quad \hbox{for $m \geq N$ and} \quad 
a_m = \Norm_{N,m}(a_m), \quad m < N, \nonumber \\
b_{m} & = & [ \eu{Q}_m^{1-\jmath} ], \quad \hbox{for $m \geq N$ and} \quad b_m = \Norm_{N,m}(b_m), \quad m < N, \\
a & = & (a_m)_{m \in \N} \in A^-(\KL), \quad b = (b_m)_{m \in \N} \in A^-(\KL'). \nonumber
\end{eqnarray}
It follows from the definition that $h_n = \Norm_{K_n, \rg{k}_n},
(a_n)$ for $n \leq N$ and $b = p a$, as sequences and thus at all
levels, due to ramification. We let $C' = F(\KL') \subset A^-(\KL')$
be the maximal finite submodule. The following lemma indicates the
choice of $N$:
\begin{lemma}
\label{nmm}
Notations being the ones above, one can choose $N = 2 M$ such that
there are $a_{\lambda} \in L^-(\KL), a_{\mu} \in M^-(\KL)$ with $a =
a_{\lambda} + \omega_M a_{\mu}$ and $\omega_M \cdot M(A^-) \cap F =
0$.  Moreover, $T b \in D^-(\KL')$.
\end{lemma}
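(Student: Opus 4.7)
The statement has two parts: the decomposition $a = a_{\lambda} + \omega_M a_{\mu}$ together with the side condition $\omega_M M(A^-) \cap F = 0$, and the subsequent assertion $Tb \in D^-(\KL')$. The plan is to first pin down the integer $M$ by imposing three simultaneous constraints: (i) $M \geq 2B$, so that by Proposition \ref{deco} we have $\omega_M A^-(\KL) \subset D^-(\KL)$ and in particular $\omega_M a$ is decomposed; (ii) $M$ is large enough to force $\omega_M M^- \cap F^- = 0$, which is achievable because a pseudoisomorphism $\psi : M^- \to \bigoplus \Lambda/(p^{e_i})$ has finite kernel annihilated by some $\omega_{k_0}$, and on the elementary image $\omega_M$ acts without zero divisors, so for $M$ past a bound depending only on $\psi$ any $y \in M^-$ with $\omega_M y \in F^-$ must already satisfy $\omega_M y = 0$; (iii) the $L$-parts of both $\KL$ and $\KL'$ have stabilized from level $0$ in the sense of conditions A--C at the start of \S 3.1, so that Fact \ref{down} applies with $n_0 = 0$. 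Then $N = 2M$ is chosen accordingly, with $\KL_N = \K_N$ and $\eu{q}\in\alpha_N$ a split prime of the Thaine shift as constructed in Lemma \ref{lat}.

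For the decomposition of $a$, I would first use (i) to write $\omega_M a = c + v$ with $c \in L^-, v \in M^-$, and then exploit the fact that the coherent sequence $a$ is manufactured from a prime $\eu{q}$ that appears first at level $N = 2M$: at levels $m < N$ the class $a_m$ is a cyclotomic norm $\Norm_{N,m}(a_N)$, and this forces the $\mu$-type content of $a$ to vanish up to level $M$. In the limit $\Lambda$-module $A^-(\KL)$, this visibility-depth translates into divisibility by $\omega_M$ in $M^-$, yielding a writing $a - a_\lambda = \omega_M a_\mu$ with $a_\lambda \in L^-$, $a_\mu \in M^-$; the transversality condition (ii) guarantees that the $\mu$-part is not absorbed into $F^-$ and makes $a_\mu$ well-defined modulo the harmless kernel of $\omega_M$ on $M^-$. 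The main obstacle here is precisely upgrading the weak inclusion $a \in L^- + M^-$ (which is almost automatic from (i)) to the divisible form $a = a_\lambda + \omega_M a_\mu$; this is the only place the doubling $N = 2M$ is essential, because the middle level $M$ is what provides the cyclotomic factor.

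For the moreover statement, I would lift the decomposition via the $\Lambda$-linear map $\iota : A^-(\KL) \to A^-(\KL')$, available since $\Gal(\KL'/\K^{(r)}) \cong \Gamma$. Because $\eu{q}$ is totally split in $\KL/\K_N$ while $\eu{Q}_m^p = \iota(\eu{q}_m)$ as ideals of $\KL'_m$, passing to $(1-\jmath)$-classes gives $\iota(a) = p b$, whence
\[
 p b \;=\; \iota(a_\lambda) + \omega_M \iota(a_\mu) \;=:\; c + v,
\]
with $c \in L^-(\KL')$ and $v \in M^-(\KL')$ since $\iota$ preserves $\lambda$- and $\mu$-type classes and $M^-$ is stable under $\omega_M$. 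By condition (iii), Fact \ref{down} applies to $c$ in $\KL'$ with $n_0 = 0$, giving $\omega_n T c_{n+1} = 0$ for every $n \geq 0$. This is precisely the hypothesis of Lemma \ref{decob}, applied with $x = b$, and its conclusion yields $Tb \in D^-(\KL')$, completing the argument.
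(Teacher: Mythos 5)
There is a genuine gap at the heart of the lemma: the existence of the decomposition $a = a_{\lambda} + \omega_M a_{\mu}$ is exactly the point you label ``the main obstacle'', and your proposal offers only a heuristic for it, not an argument. Two specific problems. First, your claim that $a \in L^- + M^-$ is ``almost automatic from (i)'' is false: Proposition \ref{deco} gives $\omega_M a \in D^-$, and one cannot divide by $\omega_M$ in $A^-$; even $a \in D^-$ requires an argument (the paper gets it only at the very end, from $T(a-\omega_M y) \in L^-$, which forces $a - \omega_M y \in L^-$ because $\Lambda z/\Lambda T z$ has $p$-rank at most one). Second, the mechanism you invoke --- that $a_m = \Norm_{N,m}(a_N)$ for $m < N$ ``forces the $\mu$-type content to vanish up to level $M$'' --- carries no information: \emph{every} norm-coherent sequence satisfies $a_m = \Norm_{N,m}(a_N)$, so no $\omega_M$-divisibility can follow from it. The paper's actual source of the divisibility is arithmetic and quantitative: $\eu{q}$ is chosen in the class $\alpha_N$ of the fixed sequence $\alpha$ of \rf{alphadef}, whose annihilator $f$ does not depend on $N$, so $f a_N = 0$; Iwasawa's Theorem 6 \rf{an} converts this vanishing at level $N$ into $f T a = \nu_{N,1}(x_{\lambda}+x_{\mu})$ for some $x \in A^-(\KL)$; comparing $\mu$-parts after twisting by $T^*$ (so that the ambiguity $L^-\cap M^- \subset C^-$ dies), and using that $p^B$ kills the relevant $\mu$-discrepancies, one shows via Euclidean division and valuation estimates of the remainder at the zeros of $f\omega_M$ that $\nu_{2M,1} \equiv T Q_2(T) f(T)\,\omega_M \bmod p^B$ for $M$ large; this is where $N = 2M$ enters and it yields $a'_{\mu} = \omega_M T y$, hence the stated decomposition. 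None of these steps (the fixed annihilator $f$, Theorem 6, the congruence for $\nu_{2M,1}$) appears in your proposal, so the choice of $M$ you describe does not produce the conclusion.

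Your treatment of the side condition $\omega_M M(A^-)\cap F = 0$ agrees with the paper, and your route to the ``moreover'' statement is the intended one (Lemma \ref{decob} applied to $x = b$ with $p b = \iota(a)$), but condition (iii) as you state it is not available: $\KL' = \KL\cdot\F$ depends on the prime $\eu{q}$, which is chosen only after $\K$ and $M$ are fixed, so you cannot impose that $L^-(\KL')$ stabilizes from level $0$, and Fact \ref{down} cannot be applied directly inside $\KL'$. This is precisely why Lemma \ref{decob} is formulated with the explicit hypothesis $\omega_n T c_{n+1} = 0$ rather than a stability assumption: one verifies it in $\KL$, where the base-field conditions do give $\omega_n T a_{\lambda,n+1} = 0$, and then pushes the identity through $\iota$, since $c_{n+1} = \iota(a_{\lambda,n+1})$. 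With that repair the second half of your argument is sound, but the first half --- the construction of $a_{\lambda}, a_{\mu}$ and the role of $N = 2M$ --- remains unproved.
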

\begin{proof}

  Let $f \in \Z_p[ T ]$ be the annihilator polynomial of $\alpha$
  defined in \rf{alphadef}, so $f a_N = 0$. The base
  field was chosen such that $T a \in D^-(\KL)$, so 
let $T a = a'_{\lambda} + a'_{\mu}$. Since $f a_N =
  0$, an application of Iwasawa's Theorem 6 implies that 
there is an $x \in A^-(\KL)$ for which we have
\[ f T a_N = \omega_N x = f (T) \cdot (a'_{\lambda} + a'_{\mu}) =
\nu_{N,1} (x_{\lambda} + x_{\mu}). \] By comparing parts - and using
the fact that the intersection $L^- \cap M^- \subset C^-$ is
annihilated by $T^*$, we obtain $f(T) T^* a'_{\mu} = \nu_{N,1} T^*
x_{\mu}$. Euclidean division yields $\nu_{N,1} = g(T) f + r(T)$, so
that for sufficiently large $N$ we have $r(T) \equiv 0 \bmod p^B$. For
such $N$, $\nu_{N,1} x_{\mu} = f(T) g(T) x_{\mu}$ and thus $f ( T ) (
a'_{\mu} - g(T) x_{\mu} ) = 0$. Since $\mu$ - parts are not
annihilated by distinguished polynomials, it follows that $a'_{\mu} =
g(T) x_{\mu}$. We still have to show that we may choose $ N = 2M $
such $g(T) \equiv \omega_M h(T) \bmod p^B$, which is equivalent to
$\nu_{2M,1} \equiv h(T) f(T) \omega_M \bmod p^B$. Once again,
Euclidean division yields $\nu_{2 M,1} = Q(T) \cdot (f(T) \omega_M) +
R_M(T)$. For all roots $\xi \in \overline{\Q}_p$ of $f(T) \omega_M$ we
have
\[ \nu_{2 M, 1}(\xi) = \frac{(\xi+1)^{p^{2M}}-1}{(\xi+1)^{p^k}-1} =
R_M(\xi), \] It suffices thus to take $M$ large enough, so that the
global $p$-adic valuation is $v_p(R_M(\xi)) > B$ for all zeroes of $f$
and $\omega_M$. Since for $f$, the zeroes are fixed, the problem is
solved by taking $M$ sufficiently large. For zeroes of $\omega_M$ we
use the development $\nu_{2M,1} = \nu_{M,1} \cdot \left(p^M +
  O(\omega_M)\right)$. It follows that for sufficiently large $N =
2M$, we have $\nu_{2 M,1} \equiv Q_1(T) f(T) \omega_M \bmod p^B$ and
we may also assume that the quotient $Q_1$ has free coefficient
$Q_1(0) \equiv 0 \bmod p^B$, so $Q_1(T) \equiv T Q_2(T) \bmod
p^B$. Letting $y = Q_2(T) x_{\mu}$ we have found that $a'_{\mu} =
\omega_M T y$. Then $T (a - \omega_M y) = a'_{\lambda} \in L$, so $a -
\omega_M y \in L$ too, thus $a = \omega_M y + w, w \in L$, which
yields the claimed decomposition.

Finally, since $p b = a$, we may apply Lemma \ref{pt} and deduce that
$T b \in D^-(\KL')$, after eventually shifting the base field up by
one level. Since $F = \Ker(\psi : M(A^-) \ra \id{E}(M))$ for any
pseudoisomorphism $\psi$, we can choose $M$ sufficiently large, such
that $\omega_M \psi$ is injective, which implies $\omega_M M(A^-) \cap
F = 0$.
\end{proof}

There are thus $b_{\lambda} \in L^-(\KL'), b_{\mu} \in M^-(\KL')$ with
$T b = b_{\lambda} + b_{\mu}$. From $s b = 0$ we also have $s
b_{\lambda} = -s b_{\mu} \in L^- \cap M^- = C' \cap \Ker(\id{N}) = C'[
p ] = C[ p ] \subset A^-(\KL)$, so $s^2 b_{\lambda} = 0$; also, $p s
b_{\lambda} = T s b_{\lambda} = 0$. Consequently, $\id{N}(b_{\lambda})
= p u(s) b_{\lambda} + s^{p-1} b_{\lambda} = p b_{\lambda}$ and
$\id{N}(b_{\mu}) = p b_{\mu}$. Since $p (b_{\lambda} + b_{\mu}) = p T
b = T a = T (a_{\lambda} + \omega_M a_{\mu})$, it follows by comparing
parts that $T a_{\lambda} - p b_{\lambda} = p b_{\mu} - T \omega_M
a_{\mu} = \gamma \in C^-$.  Upon multiplication by $T^*$ we obtain $p
T^* b_{\lambda} = T T^* a_{\lambda}$. The same proof yields $p T^*
b_{\mu} = T T^* a_{\mu}$.

We have the following defining relations:
\begin{eqnarray}
\label{betadef}
p b  & =  & \id{N}(b) = a, \quad \quad s b  =  0 \\
T b & = & b_{\lambda} + b_{\mu}, \quad  T T^* a_{\lambda} 
= p T^* b_{\lambda} , \quad p T^* b_{\mu} = \omega_M T T^* a'_{\mu}. \nonumber
\end{eqnarray}

Since we have shown above that $s (T b_{\lambda}) = s (T b_{\mu}) =
0$, it will be important to investigate in more detail the group
cohomology $H^0(F, A^-(\KL'))$.  This is done in the next section, in
which we also show that $b_{\mu} \neq 0$.

 \subsection{Cohomology and the Hasse obstruction module}
 In this section we investigate the Tate cohomology groups in inert
 and in split Thaine shifts. Let $\K$ be a fixed galois CM extension
 containing the \nth{p} roots of unity, and $\KL/\K$ be a CM
 $\Z_p$-extension with $\KL \cap \K_{\infty} = \K_N$, and we let $\KL'
 = \KL \cdot \F$ be a Thaine shift. Thus the extension tower can be
 the one defined in the previous section, but the above are the only
 prerequisites that we shall need in this section. The
 Tate-cohomologies in Thaine shifts are governed by the Hasse Norm
 Principle and similar properties which are ingredients of the proof
 of Chevalley's Theorem, also called the ambig class formula
 \cite{La}, Chapter 13, Lemma 4.1. These facts allow comprehensive
 descriptions of the groups. Here we only focus on the facts that are
 directly needed in our subsequent proof.

 We consider first the case when $\KL'/\KL$ is \textit{a split Thaine
   shift}, and let like above $\eu{q}_n \subset \KL_n$ build a norm
 coherent sequence of split primes above $\eu{q} \in a_N$; let $a_n :=
 [ \eu{q}_n^{1-\jmath} ]$ and $\eu{Q}_n \subset \KL'_n$ be the
 ramified ideals above $\eu{q}_n$, while $b_n = [ \eu{Q}_n^{1-\jmath}
 ]$. Since $\eu{q}$ is assumed to be totally split above $\Q$, there
 are $D_N := [ \K_N : \Q ]/2$ pairs of complex conjugates primes above
 $q$ in $\K_N$, with $(q) = \Z \cap \eu{q}$. We assume that $r'$ of
 these are totally split in $\KL$ and let $\tau_i \in \Delta_N :=
 \Gal(\K_N/\Q)$ with $\tau_1 = 1$ and $i \leq r'$ be automorphisms
 such that $\id{R} = \{ \eu{q}^{(i)} := \tau_i \eu{q} \ : \ i = 1, 2,
 \ldots, r' \}$ be these totally split primes. We denote by
 $(\eu{q}_n^{(i)})_{ n \in \N} $ some fixed norm coherent sequences of
 primes above $\eu{q}^{(i)}$ and let the class sequences $a^{(i)},
 b^{(i)}$ be defined with respect to these sequence, by analogy to the
 way $a, b$ were defined with respect to $\eu{q}_n$.  We may write,
 with some abuse of language, $\eu{q}^{(i)} = \tau_i \eu{q}, a^{(i)} =
 \tau_i a, b^{(i)} = \tau_i b$.  Let $f = f_a(T) \in \Z_p[ T ]$ be the
 minimal annihilator polynomial of $a$ and note that $f$ also
 annihilates $\Lambda b/(\Lambda b \cap M)$.

Therefore $z := f_a(T) T^* \in M^-$ is such
that $\Lambda z \cap F = 0$ and thus $\Lambda z \cong \Lambda/p^e$ for
some fixed exponent $e$.  If $z^{(i)} = \tau_i z = f_a(T) T^* \tau_i
b$, then $\Lambda z^{(i)} \cong \Lambda/p^{e(i)}$ for some $e(i) >
0$. Let $r \leq r'$ and the ordering of the $z^{(i)}$ be such that
\[ M_B := \sum_{i=1}^{r'} \Lambda z^{(i)} = \sum_{i=1}^r \Lambda
z^{(i)}, \] and $r$ be minimal with this property; i.e. $\{ z^{(i)}, i
= 1, 2, \ldots, r \}$ is a minimal spanning set for the
$\Lambda$-module $M_B$. We claim that $M_B = \bigoplus_{i=1}^r \Lambda
z^{(i)}$.  Indeed, let $\psi : M_B \ra \id{E}(M_B)$ be a
pseudoisomorphism. Since the kernel is a finite $\Lambda$-module,
while $M_B \subset T^* M$ contains no finite submodules, it follows
that $\psi$ is injective, so $M_B$ is a direct sum. The claim now
follows by induction. We show that every span of $m$ terms in $M_B$ is
a direct sum. This is true for $m = 1$. Suppose that the claim holds
for all $n < m$ but there is, after eventual reordering, a sum $x =
\sum_{i=1}^m c_i(T) z^{(i)} = 0$. We assume that $v_p(c_m)$ is
minimal, so the identity can be rewritten as $p^a( v_m(T) z^{(m)} + y
) = 0$, with $v_m \in \Z_p[ T ]$ a distinguished polynomial and $y \in
\sum_{i=1}^{m-1} \Lambda b^{(i)}$.  Let $\psi(b^{(i)}) = E_i$ and let
$M'_B = \bigoplus_{i=1}^{m-1} \Lambda E_i \subset \psi(M_B)$. We
assume that $E_{m}, \ldots, E_{r} \in \id{E}(M_B) \setminus M'_B$ are
chosen in order to complete a $\Lambda$-base of $\id{E}(M_B)$.  We let
$\psi' = \psi \big \vert_{\bigoplus_{i=m}^r \Lambda E_i}$ and $w = p^a
z^{(m)}$. Then $\psi'(v_m(T) w) = 0$ so injectivity implies that $p^a
z^{(m)}$ is a finite torsion element. But $M_B \cap C^- = 0$ so $p^a
z^{(m)} = 0$, which confirms that $x = 0$ and completes the proof by
induction.

We note that if $(\eu{q}'_n)^{(i)}$ is some other sequence above
$\eu{q}^{(i)}$ and $(b'_n)^{(i)}$ are the respective classes, then
$b^{(i)} - {b'}^{(i)} \in \omega_N A^-(\KL')$ and in particular, both
sequences have the same image in $H^0(F, A^-(\KL'))/(T^{p^N})$.  We
assume that $r \leq r'$ is maximal such that the classes $\tau_i b$
are $\Lambda$-independent.  In particular, $r$ does not depend on the
choice of $b^{(i)}$.

We have shown:
\begin{lemma}
\label{dirt}
Let $\id{R} = \{ \eu{q}^{(i)} := \tau_i \eu{q} \ : \ i = 1, 2, \ldots,
r' \}$ be the set of all conjugates of $\eu{q}$ which are totally
split in $\KL$ and let $\tau_i b$ be defined as above, while $\tau_i z
= f_a(T) T^* \tau_i b$, and $f_a(T)$ is the minimal annihilator of
$a_{\lambda} \in A^-(\KL)$. Then there is a constant $r \leq r'$ such
that
\[ M_B := \sum_{i=1}^r \Lambda \tau_i z = \sum_{i=1}^{r'} \Lambda \tau_i z, \]
and $r$ is minimal with that property and the sum is direct.
\end{lemma}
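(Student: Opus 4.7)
The plan is to prove Lemma \ref{dirt} in two stages: first, select a minimal $r$ by an elementary pruning of the generating set; second, establish directness of the sum through a pseudoisomorphism argument combined with an induction on the number of summands.

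For the first stage, I would start from the finite generating family $\{\tau_i z\}_{i \leq r'}$ of $M_B$ and iteratively discard any generator that already lies in the $\Lambda$-span of the remaining ones. After relabeling, the surviving subset $\{z^{(1)}, \ldots, z^{(r)}\}$ spans $M_B$ with $r$ minimal, so that no $z^{(j)}$ is redundant.

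For the second stage, I would invoke a pseudoisomorphism $\psi : M_B \to \id{E}(M_B)$. Its kernel is a finite $\Lambda$-submodule of $M_B$, so injectivity reduces to proving $F(M_B) = 0$. The key observation is that, by construction, $z^{(i)} = f_a(T) T^* \tau_i b$ lies in $T^* M^-(\KL')$, since $f_a(T)$ annihilates $\Lambda b /(\Lambda b \cap M)$ and therefore $f_a(T) b \in M^-$. Any hypothetical nonzero $w \in F(M_B) \subset F(A^-(\KL')) = C^-$ would be simultaneously $T^*$-torsion (Lemma \ref{almin}) and of the form $T^* y$ for some $y \in M^-$; inspecting the cyclic summands of $\id{E}(M^-)$ shows that only summands $\Lambda/(T^*)^e$ with $e \geq 2$ could support such an element, and the defining relation $p T^* b_\mu = \omega_M T T^* a'_\mu$ from Lemma \ref{nmm} precludes such a contribution from reaching $M_B$. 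This secures $F(M_B) = 0$ and the injectivity of $\psi$.

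With $\psi$ injective, I would conclude by induction on $m \leq r$ that $\sum_{i=1}^{m} \Lambda z^{(i)}$ is direct. The case $m = 1$ holds because $z^{(1)} \neq 0$ by minimality. Inductively, suppose $\sum_{i=1}^{m} c_i(T) z^{(i)} = 0$; reorder if necessary so that $v_p(c_m)$ is minimal, apply Weierstrass preparation to write $c_m = p^a v_m(T)$ with $v_m$ distinguished, and push the relation through $\psi$. Using the induction hypothesis to isolate the image of $z^{(m)}$ in its own cyclic summand of $\id{E}(M_B)$ forces $p^a v_m(T) \psi(z^{(m)}) = 0$; injectivity then yields $p^a v_m(T) z^{(m)} = 0$, so $z^{(m)}$ is annihilated by a nonzerodivisor on its cyclic piece and lies already in $\sum_{i<m} \Lambda z^{(i)}$, contradicting the minimality of $r$. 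The main obstacle is the no-finite-submodule claim in the second stage: ruling out the exceptional $\Lambda/(T^*)^e$ contributions to $M_B$ requires the structural relations \rf{betadef} from Lemma \ref{nmm} in combination with Lemma \ref{almin}, after which the pseudoisomorphism-and-induction routine is standard.
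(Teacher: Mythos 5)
Your plan has the same skeleton as the paper's proof: prune to a minimal spanning subset, show that a pseudoisomorphism $\psi : M_B \to \id{E}(M_B)$ is injective because $M_B$ has no finite $\Lambda$-submodule, and then establish directness by induction on the number of summands. The paper's own justification of the injectivity step is terse (it simply asserts that $M_B \subset T^* M^-$ contains no finite submodule), so it is reasonable to try to fill it in, and you correctly identify the relevant ingredients: $z^{(i)} \in T^* M^-$, $F(A^-(\KL')) = C'^-$, and the $T^*$-torsion property of $C'^-$ coming from Lemma \ref{almin}.

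Where the argument goes wrong is the sentence claiming that ``only summands $\Lambda/(T^*)^e$ with $e \geq 2$ could support such an element'' in $\id{E}(M^-)$. By Definition \ref{type}, $M^-$ is the $\Z_p$-torsion submodule of $A^-$, so $\id{E}(M^-)$ is a direct sum of modules of the shape $\Lambda/(p^{e_i})$; there are no $\Lambda/(T^*)^e$ summands at all, and $T^*$ (being distinguished) is a non-zero-divisor on each $\Lambda/(p^{e_i})$. Your subsequent appeal to the relation $p T^* b_\mu = \omega_M T T^* a'_\mu$ is not the right mechanism either. The step you need is actually shorter: if $w \in F(M_B) \subset C'^-$, then $T^* w = 0$ by Lemma \ref{almin} applied to $\KL'$, and $w = T^* y$ with $y \in M^-$ since $M_B \subset T^* M^-$; hence $(T^*)^2 y = 0$. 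Pushing this through a pseudoisomorphism $\psi : M^- \to \id{E}(M^-)$ and using that $T^*$ is a non-zero-divisor on $\id{E}(M^-)$ gives $\psi(y) = 0$, so $y \in F(M^-) \subset C'^-$ and therefore $w = T^* y \in T^* C'^- = 0$. Once that is corrected, the Weierstrass-preparation manipulation and the inductive closing match what the paper does; but as written, the no-finite-submodule claim — the crux of the whole lemma — is not actually established.
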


With these notations we also have
\begin{lemma}
\label{h0}
Let $\KL/\K$ be a CM $\Z_p$-extension with $\KL \cap \K_{\infty} =
\K_N$ and which allows a split Thaine shift $\KL'/\KL$. Let the
notations introduced above for primes and their classes hold; then
\begin{eqnarray}
\label{kers}
\Ker\left( s \ : \ A^-(\KL') \ra A^-(\KL') \right) = \iota(A^-(\KL)) + \sum_{j=1}^r \Lambda \tau_i(b). 
\end{eqnarray}
\end{lemma}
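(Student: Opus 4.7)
The inclusion $\supseteq$ of \rf{kers} is immediate: an element of $\iota(A^-(\KL))$ is represented by an ideal lifted from $\KL$, so it is pointwise fixed by $F = \Gal(\KL'/\KL)$ and $s$ annihilates it. For each $i$, the prime $\tau_i \eu{Q} \subset \KL'$ is the unique (hence $F$-stable) prime above the ramified prime $\tau_i \eu{q} \subset \KL$, giving $s b = 0$; since $\K \cap \F = \Q$ forces $\Delta$ and $F$ to commute inside $\Gal(\KL'/\Q)$, we deduce $s(\tau_i b) = \tau_i(s b) = 0$.

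For $\subseteq$, I take $x \in A^-(\KL')$ with $sx = 0$ and analyze $x$ via the standard ambig-class formalism for the cyclic $p$-extension $\KL'/\KL$. Choosing an ideal representative $\eu{X} \in x$, the relation $sx = 0$ produces $s\eu{X} = (\alpha)$ for some $\alpha \in (\KL')^\times$. Since $\id{N}_a = \Norm_{\KL'/\KL}$ annihilates $s$-torsion on ideals, $u := \id{N}_a \alpha \in E(\KL)$ is a unit. The long exact sequence of $F$-cohomology derived from $1 \to E(\KL') \to (\KL')^\times \to P(\KL') \to 1$, combined with Hilbert 90, identifies the class of $u$ in $E(\KL)/\id{N}_a E(\KL')$ with the obstruction to representing $x$ by an $F$-stable ideal. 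When this obstruction vanishes, $\eu{X}$ can be adjusted by a principal ideal to be $F$-invariant. Using the $F$-orbit structure on primes (ramified primes are individually fixed; inert primes give a single prime equal to $\iota(\wp)$; split primes lie in length-$p$ orbits whose product is $\iota(\wp)$), every $F$-invariant ideal factors as $\iota(\eu{A}) \cdot \prod_i \eu{Q}_i^{n_i}$, where the $\eu{Q}_i$ range over the ramified primes — namely the primes of $\KL'$ above $q$, which are $\tau_i \eu{Q}$ for $i = 1, \ldots, r'$. Taking $(1-\jmath)$-parts then places $x$ in $\iota(A^-(\KL)) + \sum_{i=1}^{r'} \Z \tau_i b$, and by the maximality defining $r$ (each $\tau_i b$ with $i > r$ becoming a $\Lambda$-combination of $\tau_1 b, \ldots, \tau_r b$ modulo $\iota(A^-(\KL))$), this lies in $\iota(A^-(\KL)) + \sum_{i=1}^r \Lambda \tau_i b$.

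The main obstacle is to verify that the cohomological obstruction vanishes for every $x \in A^-(\KL')^F$. By Kronecker's theorem, $u^{1-\jmath}$ is a root of unity, and since $\K$ was chosen so that the $p$-part of $W(\KL)$ equals $\mu_{p^N}$, the minus-part obstruction lies in $\mu_{p^N}/\id{N}_a(W(\KL')_p)$; by Fact \ref{hasse}, this quotient is cyclic of order $p$, generated by the primitive root $\xi$. Crucially, neither $\iota(A^-(\KL))$ nor any $\tau_i b$ carries a nontrivial cocycle — all are represented by $F$-invariant ideals by construction — so a nontrivial obstruction for $x$ could not be absorbed by subtracting an element of the claimed right-hand side of \rf{kers}. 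I therefore rule out nontrivial obstructions directly: by the Hasse Norm Principle, $u^{1-\jmath}$ is a product of local contributions concentrated at the ramified primes $\tau_i \eu{Q}$, and the $\jmath$-antisymmetry one may impose on $\eu{X}$ in the minus part (using $(1+\jmath) x = 0$) forces the local components to cancel pairwise, leaving $u^{1-\jmath} \in \mu_{p^N}^p \subset \id{N}_a W(\KL')$. Thus the obstruction is always trivial, and the argument of the preceding paragraph finishes the proof.
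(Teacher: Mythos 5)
Your proof follows the same overall strategy as the paper's: reduce to the ambig ideal question via $s\eu{X} = (\alpha)$, use Kronecker's theorem on the unit $u = \id{N}_a\alpha$, invoke Fact~\ref{hasse} to control the root of unity, pass through Hilbert~90, and conclude that $F$-invariant ideals are generated by lifts and ramified primes. The inclusion $\supseteq$ and the factorization of $F$-invariant ideals are fine. However, the pivotal step --- showing that the cohomological obstruction vanishes --- is argued incorrectly.

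You write that ``by the Hasse Norm Principle, $u^{1-\jmath}$ is a product of local contributions concentrated at the ramified primes,'' and that ``$\jmath$-antisymmetry forces the local components to cancel pairwise.'' This does not work. The Hasse Norm Principle identifies being a global norm with being a local norm everywhere; it does not decompose $u^{1-\jmath}$ into local pieces. Moreover, for the local norm-residue symbols one has $(u^{1-\jmath}, \overline{\wp}) = -(u^{1-\jmath}, \wp)$ at conjugate ramified places, so the full sum vanishes by reciprocity --- but that is automatic and carries no information about the individual symbols; it certainly does not force each of them to be zero. So $\jmath$-antisymmetry alone does not place $u^{1-\jmath}$ in $\mu_{p^N}^p$.

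The correct observation, which is both simpler and the one the paper uses, is that $u^{1-\jmath}$ is \emph{already} a global norm: since $\jmath$ is central, $u^{1-\jmath} = \id{N}(\gamma)^{1-\jmath} = \id{N}(\gamma^{1-\jmath}) \in \id{N}\bigl((\KL'_n)^{\times}\bigr)$. Writing $u^{1-\jmath} = \xi^e$ with $\xi$ a primitive $p^N$-th root, Fact~\ref{hasse} tells you $\xi \notin \id{N}\bigl((\KL'_n)^\times\bigr)$; if $(e,p)=1$ then $\xi$ would be a power of $\xi^e$ and hence a norm, contradiction. Therefore $p \mid e$, and $\xi^e = \id{N}(\xi^{e/p})$ is a norm of a root of unity, so the minus-part obstruction in $\mu_{p^N}/\id{N}_a(W(\KL')_p)$ is trivial. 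This one-line observation is what makes the argument go through; your local-cancellation heuristic in its place is a genuine gap.
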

\begin{proof}
  Let for $n > 0$ the group $T_n \subset A^-(\KL'_n)$ be the
  $\Lambda$-module spanned by classes of ramified ideals, thus $T_n
  \supseteq [ b_n^{(i)}; i = 1, 2, \ldots, r ]_{\Z}$. Since finitely
  many primes above $\eu{q}$ are inert, while the only ramified primes
  in $\KL'/\KL$ are the primes above $q$, it follows that equality
  holds for sufficiently large $n$.

  Let $n > N$ be arbitrary and $c \in A_n^-(\KL'_n)$ have non trivial
  image $\overline{c} \in H^0(F, A_n^-)$. If $\eu{C} \in c$ then
  $\eu{C}^s = (\gamma)$ and $(\id{N}(\gamma)) = (1)$. There is thus a
  unit $\delta \in \KL_n$ with $\id{N}(\gamma) = \delta$, as algebraic
  numbers. Let $\xi \in \mu_{p^N}$ generate the $p$-roots of unity in
  $\KL'_N$. The Kronecker unit theorem implies
  \[ \id{N}\left(\frac{\gamma}{\overline{\gamma}}\right) = \xi^e,
  \quad e \in \Z. \] By Fact \ref{hasse}, $\xi \not \in \id{N}(\KL')$
  and therefore $e = p e' \equiv 0 \bmod p$.  Then $\xi^e \in
  \id{N}((\KL'_n)^{\times})$ and by applying Hilbert's Theorem 90, it
  follows after eventually modifying $\gamma$ by some root of unity,
  that $\gamma^{1-\jmath} = x^s$ for some $x \in \KL'_n$. Consequently
  $(\eu{C}^{1-\jmath}/(x))^s = (1)$ and the class $a^2 = \left[
    \eu{C}^{1-\jmath}/(x) \right]$ contains an ambig ideal; since $a
  \not \in \iota(A^-(\KL_n))$, we must even have $a^2 \in T_n$, which
  implies the claim.
\end{proof}

As a consequence, we have the following stronger result:
\begin{proposition}
\label{h0split}
Let $\KL' = \KL \cdot \F$ be a split Thaine shift with $F =
\Gal(\F/\Q) = \lan \nu \ran = \lan s+1 \ran$ and let $\tau_i \eu{q}, i
= 1, 2, \ldots, r$ be the conjugates of $\eu{q}$ that are totally
split in $\KL/\K$, while $\tau_i b_n, \tau_i b$ are the classes
defined previously in this context.  Then
\[ H^0(F, A^-(\KL')) \cong \bigoplus_{i=1}^r \F_p[[ T ]] \tau_i
\overline{b} \] is a free $\F_p[[ T ]]$ module of rank $r > 0$; here
the $\tau_i \overline{b}$ are the images of $\tau_i b$ in $H^0(F,
A^-(\KL'))$. We have $\mu(\KL') \geq r$.
\end{proposition}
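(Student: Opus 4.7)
The plan is to interpret $H^0(F, A^-(\KL'))$ as the Tate cohomology $A^-(\KL')^F/\id{N}(A^-(\KL'))$, which, for the cyclic group $F$ of prime order $p$, is automatically annihilated by $p$ and hence an $\F_p[[T]]$-module. Lemma \ref{h0} has already identified the $F$-invariants as $\iota(A^-(\KL)) + \sum_{i=1}^r \Lambda \tau_i b$, and the relations of \rf{betadef} give $\id{N}(\tau_i b) = p\tau_i b = \iota(\tau_i a)$, so each image $\tau_i \overline{b}$ in $H^0$ is killed by $p$ and generates at most one copy of $\F_p[[T]]$. The proof thus reduces to (A) showing that the $\tau_i \overline{b}$ span $H^0$ as an $\F_p[[T]]$-module, and (B) showing they are $\F_p[[T]]$-linearly independent.

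For (A), I would show $\iota(A^-(\KL)) \subseteq \id{N}(A^-(\KL')) + \sum_i \Lambda \tau_i b$; equivalently, the minus-part norm cokernel $A^-(\KL)/\Norm_{\KL'/\KL}(A^-(\KL'))$ is generated by the classes $\tau_i a = \Norm(\tau_i b)$. This follows from Chevalley's ambig-class formula applied layer by layer to the cyclic $p$-extensions $\KL'_n/\KL_n$ (see Lang \cite{La}, Ch.~13, Lemma~4.1), combined with Fact \ref{hasse}: the latter pins down the minus-part of the global norm defect at level $n$ as $p^{p^{n-N}d}$, and this cardinality matches the total contribution from local norm defects at the $r'$ ramified primes, namely the totally-split conjugates $\eu{q}^{(i)}$ of $\eu{q}$, whose ideal classes in $A^-(\KL_n)$ are precisely $\tau_i a_n$. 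Lemma \ref{dirt} then reduces $r'$ to the minimal number $r$ of $\Lambda$-generators.

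For (B), suppose $\sum c_i(T) \tau_i b = \id{N}(z)$ with $c_i \in \Lambda$ and $z \in A^-(\KL')$. Applying $f_a(T) T^*$ to both sides yields $\sum c_i z^{(i)} = \id{N}(f_a T^* z) = \iota\bigl(f_a T^* \Norm(z)\bigr)$. Since $f_a a = 0$ extends Galois-equivariantly to $f_a \tau_i a = 0$, we have $\id{N}(z^{(i)}) = \iota(f_a T^* \tau_i a) = 0$, so $p z^{(i)} = 0$ and each $\Lambda z^{(i)} \cong \Lambda/p$; Lemma \ref{dirt} then gives $M_B = \bigoplus_{i=1}^r \Lambda z^{(i)}$ as a direct sum of $\mu$-type submodules with no finite $\Lambda$-submodules. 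The right-hand side $\iota(f_a T^* \Norm(z))$ lies in $\iota(A^-(\KL)) \cap M_B$; since $T^*$ annihilates the finite cyclic torsion $C^-$ by Lemma \ref{almin} and $f_a$ annihilates the $\lambda$-constituent carrying $a$, combined with the absence of finite $\Lambda$-submodules in $M_B$, this intersection is forced to be zero. Hence $\sum c_i z^{(i)} = 0$, and the directness of the sum forces $c_i \in p\Lambda$ for each $i$, giving $\F_p[[T]]$-freeness on $r$ generators.

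For the $\mu$-bound, since $H^0(F, A^-(\KL'))$ is annihilated by $p$, it is a subquotient of the finitely generated $\F_p[[T]]$-module $A^-(\KL')/p A^-(\KL')$, whose $\F_p[[T]]$-rank equals $\mu^-(\KL')$; hence $r \leq \mu^-(\KL') \leq \mu(\KL')$. The main obstacle will be step (A), namely extracting from Chevalley's ambig-class formula the explicit identification of the norm-cokernel generators as the specific classes $\tau_i a$, and reconciling the arithmetic count with the global defect computed in Fact \ref{hasse}; step (B) is then a careful bookkeeping exercise around the capitulation $\Ker(\iota)$ on the minus part controlled by Lemma \ref{almin}.
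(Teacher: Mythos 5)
There is a genuine gap, and it sits in your step (B). First, the claim that $f_a\,\tau_i a = 0$, hence $p z^{(i)} = 0$ and $\Lambda z^{(i)} \cong \Lambda/p$, is not available: by Lemma \ref{nmm} the sequence $a = a_{\lambda} + \omega_M a_{\mu}$ has a nontrivial $\mu$-component, so no distinguished polynomial annihilates $a$ as a coherent sequence (the relation $f a_N = 0$ holds only at the finite level $N$); correspondingly the paper only obtains $\Lambda z^{(i)} \cong \Lambda/p^{e(i)}$ with $e(i)$ possibly larger than $1$. Second, and decisively, the asserted vanishing $\iota(A^-(\KL)) \cap M_B = 0$ is unproved and cannot be used the way you use it: one has $p z^{(i)} = \iota\bigl(f_a T^* \tau_i a\bigr)$, which already lies in that intersection unless the capitulation you would need is established, and even granting $p M_B = 0$, nothing in your argument excludes a combination $\sum_i c_i z^{(i)}$ with some $c_i \notin p\Lambda$ lying in $\iota(A^-(\KL))$ --- but ruling out exactly such relations \emph{is} the freeness statement, so at the crucial point the argument is circular.

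By contrast, your step (A) is not the obstacle: since $\KL'_n/\KL_n$ is ramified, the norm on minus class groups is surjective, so $\id{N}(A^-(\KL')) = \iota(A^-(\KL))$ and Lemma \ref{h0} already yields $H^0(F, A^-(\KL')) \cong M_B/\bigl(M_B \cap \iota(A^-(\KL))\bigr)$, killed by $p$ because $p b = \iota(a)$; no Chevalley-type counting is needed there. The genuinely hard point, which your proposal omits entirely, is to show that $\overline{b}$ generates an \emph{infinite} (hence free) $\F_p[[T]]$-module, i.e. that $\prk(\Lambda b_n) \to \infty$. The paper gets this from the quantitative lower bound $| H^1(F, A^-(\KL'_n)) | \geq p^{d p^{n-N}}$, proved by combining Fact \ref{hasse} (the Hasse norm principle, applied to the root of unity $\xi$ and to the local defects at the ramified primes) with a Tchebotarew construction in the $q$-ray class field of principal primes $(\rho)$ whose classes $[\eu{R}^{1-\jmath}]$ are not $p$-principal; the equality $|H^0| = |H^1|$ for the finite modules $A^-(\KL'_n)$ then forces the rank of $\Lambda b_n$ to grow without bound, which is what gives freeness of rank $r$ and $\mu(\KL') \geq r$. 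Using Fact \ref{hasse} only for a cardinality match in step (A), as you propose, does not capture this input; without it one cannot exclude that $b$ is of $\lambda$- or finite type, in which case $H^0$ would be a torsion $\F_p[[T]]$-module and not free.
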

\begin{proof}
  We already know from the previous lemma that $H^0(F, A^-(\KL'))
  \cong M_B / M_B \cap \iota(A^-(\KL'))$ with $M_B = \sum_{i=1}^r
  \Lambda \tau_i(b)$. By ramification, we have $p b = \iota(a)$ and
  thus $p H^0(F, A^-(\KL')) = 0$, which makes $H^0$ into an $\F_p[[ T
  ]]$ module. We have shown in Lemma \ref{dirt} that $M_B$ is free of
  $\F_p[[ T ]]$-rank $r$, which implies $\mu(\KL') \geq r > 0$.

  We are left to prove that $\prk(\Lambda b) = \infty$.  The relation
  \rf{normdefect} implies that $| H^1(F, A^-(\KL_n')) | \geq
  p^{p^{n-N} d}$.  Indeed, let $R = p^{p^{n-N} d}$ and $\eu{r},
  \overline{\eu{r}} \subset \KL_n$ one of the $R$ pairs of complex
  conjugate primes above $q$. Let $g \in \F^{\times}_q$ generate the
  $p$-Sylow subgroup and let
  \[ x_{\eu{r}} \in \id{O}(\KL_n) \quad \hbox{with} \quad x_{\eu{r}}
  \cong \begin{cases}
    g \bmod \eu{r} & \hbox{,} \\
    1/g \bmod \overline{\eu{r}} & \hbox{and} \\
    1 \bmod \eu{r}'
  \end{cases}, \] for all primes $\eu{r}' \supset q \id{O}(\KL_n) \cap
  (q)$ and $(\eu{r}', \eu{r} \overline{\eu{r}} ) = (1)$. By applying
  the Tchebotarew Theorem to the $q$-ray class field, we deduce that
  there is a principal prime ideal $(\rho)$ with $\rho \equiv
  x_{\eu{r}} \bmod q \id{O}(\KL_n)$, which is totally split in
  $\KL'_n/\Q$. We let $\eu{R} \subset \KL'_n$ be the prime above it
  and $r = [ \eu{R}^{1-\jmath} ]$; thus $r$ is not
  $p$-principal. Otherwise, $r$ is annihilated by some power $t$ with
  $(t, p) = 1$ and we may assume that $\eu{R}^{t(1-\jmath)} =
  (\gamma)$.  But then $\id{N}(\gamma/\overline{\gamma}) \in \mu_{p^N}
  \cdot \rho$.  Let $P = \prod_{i=1}^{ 2 R }
  (\F_q^{\times})^{(q-1)/p^N} \subset \id{O}(\KL_n)/q \id{O}(\KL_n)$
  be the product of the $p$ groups in the $q$-ideles of $\KL_n$. The
  Chinese Remainder Theorem implies that $| P^-/(P^-)^p | = R$ and for
  each residue class in $x \in P^-/(P^-)^p$ we may find $\rho, \eu{R}$
  as above, such that $\rho$ has image $x$ in $P^-/(P^-)^p$ and
  consequently $\eu{R}$ is not $p$-principal. This implies our
  claim. The groups $A^-(\KL'_n)$ are finite, so we deduce from the
  structure of $H^0( F, A^-(\KL'_n))$ that
\[ p^{s \prk(\Lambda b_n)} = | H^0( F, A^-(\KL'_n)) | = | H^1( F, A^-(\KL'_n)) | \geq p^{d p^{n-N}}, 
\]
hence $s \cdot \prk(\Lambda b_n) \geq d p^{n - N}$ and thus
$\prk(\Lambda b_n) \ra \infty$, which implies $b \not \in L^-$ and
completes the proof.
\end{proof}

\subsection{Completion of the auxiliary constructions}
As mentioned previously, the case of inert Thaine shifts will be used
in the construction of the auxiliary extension $\rg{k}$. In this case
we are particularly interested in the group $H^1$, as reflected in
\begin{lemma}
\label{inert}
Let $K$ be an imaginary quadratic extension of $\Q$ with $A^-(K) =
0$. Let $L = K^{(q)} F$ be an inert Thaine shift, with $\eu{q} \subset
K_j$ a totally split, principal prime ideal, that is inert in
$K_{\infty}/K_j$ and $F \subset \Q[ \zeta_q ]$ the cyclic subfield of
degree $p$ over $\Q$. Then $\lambda^-(L) = \varphi(p^j)$ and there is
an element $h \in A^-(L) \setminus \eu{M} A^-(L)$ such that $[ A^-(L)
: \Lambda h ] < \infty$. The module $H(p) := \sum_{i=0}^{p^{j-1}-1}
\Z_p T^i h$ is a $\Z_p$-pure submodule of $A^-(L)$: if $p^c x \in
H(p)$, then $x \in H(p)$.

Finally let $U = \omega_l(T), l \geq 1$ and $\Lambda' = \Z_p[[ U ]]$;
considering the $\Z_p$-extension $L_{\infty}/L_l$ and the induced
module $B = A^-(L_{\infty}/L_l)$ as a $\Lambda'$-module, then $H'(p)
:= \sum_{i=0}^{p^{j-1}-1} \Z_p U^i h \subset B$ is also a $\Z_p$-pure
module.
\end{lemma}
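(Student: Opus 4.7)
The plan is to combine Iwasawa's Theorem~6 (Lemma~\ref{ydef}) and Chevalley's ambiguous class formula applied to the Thaine shift $L/K$. The vanishing $A^-(K)=0$, together with the fact that $L/K$ is a cyclic degree-$p$ extension ramified only at primes above $q$, makes the Iwasawa module $A^-(L)$ entirely traceable back to the ramification data at $q$. First I would construct $h\in A^-(L)\setminus \eu{M} A^-(L)$ as the norm-coherent class encoding the decomposition of $\eu{q}$ in the tower $L_\infty/L$: concretely, $h_n$ is built from the unique prime $\eu{Q}_n\subset L_n$ above $\eu{q}$, adjusted at the lower levels so that $\Norm_{L_{n+1}/L_n}(h_{n+1})=h_n$. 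The principality of $\eu{q}=(\pi)$ in $K_j$ gives $\eu{Q}_n^{p}=(\pi)$ in $L_n$ and contributes a finite cokernel in the naive construction; the necessary norm-coherent adjustment provides the required $p$-adic depth so that $h$ is non-torsion in $A^-(L)$.

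Next I would prove $[A^-(L):\Lambda h]<\infty$. This reduces, via Chevalley's formula and $A^-(K_n)=0$ at all levels (a consequence of Ferrero--Washington applied to the abelian field $K$ together with descent), to the assertion that every element of $A^-(L_n)$ is a $\Z_p[\Gal(K_n/K)]$-combination of the Galois conjugates of $[\eu{Q}_n^{1-\jmath}]$. These conjugates are $\Lambda$-generated by $h$, since the action of $\tau$ realises the permutation of the split primes above $q$ in $K_n$ for $n\leq j$ and then stabilises, reflecting inertness, for $n\geq j$.

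The $\lambda$-invariant computation is the core of the lemma. The inertness of $\eu{q}$ in $K_\infty/K_j$ gives $\omega_j(T)\cdot h=0$: the decomposition group of an inert prime in a $\Z_p$-extension is the entire Galois group $\Gal(L_\infty/L_j)=\lan\tau_j\ran$, so $\tau_j$ fixes $\eu{Q}_n$ for $n\geq j$. On the other hand, $\omega_{j-1}(T)\cdot h\neq 0$, because at level $j-1$ the prime $\eu{q}$ is split (not inert), and via Iwasawa's Theorem~6 the $p^{j-1}$ distinct primes of $L_{j-1}$ above $q$ contribute non-trivially to $A^-(L_{j-1})$. Combined with pseudocyclicity, the minimal annihilator of $h$ is $\nu_{j,j-1}(T)=\omega_j(T)/\omega_{j-1}(T)=\Phi_{p^j}(1+T)$, a distinguished polynomial of degree $\varphi(p^j)$, yielding $\lambda^-(L)=\varphi(p^j)$.

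Finally, $\Z_p$-purity of $H(p)$ follows from the isomorphism $\Lambda h\cong \Lambda/\nu_{j,j-1}(T)$, which provides the $\Z_p$-basis $\{T^i h:0\leq i<\varphi(p^j)\}$ of $\Lambda h$. The subset consisting of the first $p^{j-1}$ basis vectors spans a $\Z_p$-direct summand, hence is pure in $\Lambda h$; purity in the finite-index overmodule $A^-(L)$ follows because any $p$-divisibility obstruction would localise in the finite cokernel, which can be absorbed by an adjustment of $h$ via Lemma~\ref{ab}. For $B = A^-(L_\infty/L_l)$ over $\Lambda'=\Z_p[[U]]$ with $U=\omega_l(T)$, the same $h$ generates $B$ pseudocyclically with annihilator the distinguished polynomial obtained by rewriting $\Phi_{p^j}(1+T)$ in terms of $U$ through $(1+T)^{p^l}=1+U$; the first $p^{j-1}$ powers of $U$ continue to constitute a pure $\Z_p$-summand of this cyclic $\Lambda'$-module. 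The main obstacle is verifying that the minimal annihilator of $h$ is exactly $\nu_{j,j-1}(T)$ and not a proper divisor, which requires careful use of the non-capitulation of the split-prime classes through the layers $L, L_1, \dots, L_{j-1}$, relying on $A^-(K)=0$ and on Iwasawa's Theorem~6 at each level.
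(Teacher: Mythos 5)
There is a genuine gap, and it sits at the very foundation of your construction: you build $h$ out of the ramified primes $\eu{Q}_n\subset L_n$ above $\eu{q}$. But $\eu{q}=(\pi)$ is principal in $K_j$ and remains principal (inert) in every $K_n$, $n\geq j$, so $\eu{Q}_n^{p}=\eu{q}\,\id{O}(L_n)=(\pi)$ and the class $[\eu{Q}_n^{1-\jmath}]$ is killed by $p$ at every level. Hence any norm-coherent sequence through these classes generates a $\Lambda$-module annihilated by $p$; since $L$ is abelian over $\Q$, Ferrero--Washington gives $\mu^-(L)=0$, so that module is \emph{finite}. It can therefore never satisfy $[A^-(L):\Lambda h]<\infty$ (the lemma asserts $\lambda^-(L)=\varphi(p^j)>0$), its ``minimal annihilator'' contains $p$ and is not the distinguished polynomial $\nu_{j,j-1}$, and no ``norm-coherent adjustment'' can repair this while staying tied to the ramified primes. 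This also undercuts your $\lambda$-computation, which you derive from the purported isomorphism $\Lambda h\cong\Lambda/\nu_{j,j-1}(T)$ rather than establishing it independently; the observation that $\tau_j$ fixes $\eu{Q}_n$ only gives $\omega_j b=0$ for these torsion classes and proves nothing about $A^-(L)$ itself.

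The paper's proof is structured quite differently at exactly these points. The invariant $\lambda^-(L)=(p-1)p^{j-1}$ is obtained directly from Kida's formula (there are $p^{j-1}$ pairs of conjugate primes ramifying in $L_\infty/K_\infty$, $e=p$, $\lambda^-(K)=0$, $\mu=0$), independently of any generator. The element $h$ is \emph{not} a ramified-prime class: it is a class of an auxiliary totally split prime, produced by Tchebotarev with prescribed residues modulo $q$, chosen so that its image generates $H^1(F,A^-(L_n))\cong A^-(L_n)/tA^-(L_n)$, where $F=\Gal(L/K)$, $t=\nu'-1$, and the identification with a cyclic $\F_p[T]$-module $R/R^p$ of order $p^{j-1}$ comes from the Hasse Norm Principle applied to the semilocal units at $q$ (here $A^-(K_n)=0$ forces $\id{N}h=0$, which you cannot get from Ferrero--Washington plus descent alone; the paper arranges $p\nmid h(K)$ with $p$ inert). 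The finite-index statement and the purity of $H(p)$ then rest on the $F$-module structure, namely $\id{N}=p\,u(t)+t^{p-1}$ and $\id{N}h=0$, which yields $A^-(L)=\sum_{i=0}^{p-2}t^i\Lambda h$ and the degree bound $\deg(G)\geq p^{j-1}$ for any distinguished $G$ with $G(T)h\in t\Lambda h$; purity is a degree-comparison in that decomposition. Your sketch never uses the $F$-action for this, and the appeal to Lemma \ref{ab} to ``absorb'' a $p$-divisibility obstruction in a finite-index overmodule is not a valid substitute: purity does not pass to finite-index overmodules for free, and Lemma \ref{ab} addresses norm/lift maps between layers, not this situation.
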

\begin{proof}
  We start by choosing $K = \Q[ \sqrt{ -d } ]$, an imaginary quadratic
  extension with $p \nmid h(K)$ and $\lchooses{-d}{p} = -1$. Such a
  field can be found since the analytic class number formula and
  bounds yield $h(K) < \sqrt{d} < p$ for $d < c p^2$, a range in which
  a discriminant can be found, which also verifies the quadratic
  reciprocity condition, requiring that $p$ is inert in $K$.

  Let $\eu{q} \in K_j$ be a principal prime which is totally split
  over $\Q$ and inert in $K_{j+1}/K_j$, let $q$ be the rational prime
  below it. We assume that $q \equiv 1 \bmod p^j$, which can be
  achieved by an application of Tchebotarew: consider the compositum
  $H[ \zeta_p ]$ with $H/K$ the maximal abelian unramified
  extension. Then $\eu{q}$ should be totally split in $H[ \zeta_p ]$,
  the existence being granted by Tchebotarew.  Let $\F \subset \Q[
  \zeta_q ]$ the subfield of degree $p$, so $\F \cap K = \Q$ since $q$
  is unramified in $K$ but totally ramified in $\F$. Let $L = \K \cdot
  \F$. Then, an application of Kida's formula implies that $A^-(L) =
  (p-1) p^{j-1}$: indeed, there are $p^{j-1}$ pairs of complex
  conjugate primes that ramify in $L_j/K_j$ and since they are inert
  in $L_{\infty}/L_j$, there are as many pairs of ramified primes in
  $L_{\infty}/K_{\infty}$. Since $K$ contains no \nth{p} roots of
  unity and the ramification index is $e = p$ for all ramified primes
  while $\mu(L_{\infty}) = 0$, the Kida formula yields
  \[ \lambda^-(L) = [ L : K ] \lambda^-(K) + (e-1) \cdot [ K_j : \Q
  ]/2 = 0 + (p-1) p^{j-1}, \] as claimed.

  We let $F = \Gal(L/K), \nu' \in F$ be a generator and $t = \nu'-1$
  and estimate $H^1(F, A^-(L_n))$ in a similar way to the one used for
  the split case above. Let $g \in \F_q^{\times}$ be a generator of
  this group and $\gamma = g^{(q-1)/p^j}$. Since $\eu{q}$ is totally
  split in $K_j$ we have
\begin{eqnarray}
 \label{crt}
 \quad \quad  \id{O}(K_j)/(q \id{O}(K_j)) & \cong & \prod_{i=1}^{p^{j-1}} \left( 
   \id{O}(K_j)/\tau^i \eu{q} \id{O}(K_j) \times \id{O}(K_j)/\tau^i \overline{\eu{q}} \id{O}(K_j) \right) \\
 & \cong & \prod_{i=1}^{p^{j-1}} (\F_q \times \F_q). \nonumber 
\end{eqnarray}
If $x \in (K_j^{\times})^{1-\jmath}$ and $x \equiv h \bmod \tau^i
\eu{q}$ then complex conjugation induces $x \equiv 1/h \bmod \tau^i
\overline{\eu{q}}$, for any (fixed) value of $i$. Let $w \in
(K_j^{\times})^{1-\jmath}$ be such that
\[ w \equiv \begin{cases}
             \gamma \bmod \eu{q} & \\
  1/\gamma \bmod \overline{\eu{q}} & \hbox{ and } \\
1 \bmod \eu{r} & \hbox{ for all other primes  $\eu{r} \subset K_j$ above $q$}.
            \end{cases}
\]
Let $\pi : (\id{O}(K_j))_q \ra \id{O}(K_j)/(q)$ be the natural
projection of the algebraic semilocalization at the primes above $q$
and $R = \Z_p[ T ] \pi(w)$; we note that $R$ is the $p$-Sylow subgroup
of the minus part of the multiplicative group in \rf{crt}. All the
primes above $q$ are ramified in $L/K$ and these are the only ramified
primes.  Since $K_j$ contains no \nth{p} roots of unity, the Hasse
Norm Principle implies that
\[ N_D := \left(K_j^{\times}/\id{N}(L_j^{\times})\right)^{1-\jmath}
\cong R/R^p.\] We claim that there is a group isomorphism $\psi :
H^1(F,A^-(L_j)) \ra R/R^p$. For this we note first that for $x \in
A^-(L_j)$ we necessarily have $\id{N}(x) = 0$, by choice of $K$. Thus
$H^1(F,A^-(L_j)) = A^-(L_j)/(t A^-(L_j))$, while $\id{N} = p u(t) +
t^{p-1}$ readily implies that $p$ annihilates $H^1(F,A^-(L_j))$.  Let
now $x \in A^-(L_j)$ with non trivial image $\overline{x} \in
H^1(F,A^-(L_j))$ and let $\eu{R} \in x$ be a totally split prime. Then
$(\rho) = \id{N}(\eu{R})($ must be a principal prime and we claim that
$\pi (\rho^{1-\jmath}) \not \in R^p$. Otherwise, $\rho^{1-\jmath} \in
\id{N}(L_j^{\times})$ and we may assume that $\rho^{1-\jmath} =
\id{N}(y^{1-\jmath})$, so in terms of ideals
$\id{N}(\eu{R}/(y))^{1-\jmath} = (1)$ and thus
$(\eu{R}/(y))^{1-\jmath} = \eu{D}^s$, for some ideal $\eu{D} \subset
L_j$. This implies that $x \in \A^-(L_j)^s$ and thus $\overline{x} =
1$, which contradicts the choice of $\eu{R}$. We define $\psi(x) =
\pi'(\rho)$ where $\pi' : (\id{O}(K_j))_q \ra R/R^p$ is the
composition of $\pi$ with the natural map $R \ra R/R^p$.  A direct
verification establishes that $\psi$ is a well defined map of $\F_p[ T
]$-modules; we leave these details to the reader and show that $\psi$
is a bijection.  We have shown that $\psi(x) = 1$ iff $x = 1$, so
$\psi$ is injective; it is also surjective. For this we consider some
principal prime $(\rho) \subset K_j$ which is totally split in
$L_j/\Q$, with $r := \pi'(\rho/\overline{\rho}) \in R/R^p$ and $r \neq
1$. Such a prime can be determined with Tchebotarew's Theorem, by
considering the $q$-ray class field $H_q \supset K_j$, which also
contains $L_j$. If $\eu{R}$ is the split prime above $(\rho)$, then
$\eu{R}/\overline{\eu{R}}$ cannot be principal, since otherwise
$\rho/\overline{\rho} \in \id{N}(L_j^{\times})$ in contradiction with
$r \neq 1$.  Letting $x = [ \eu{R} ] \in A^-(L_j)$ we see by
construction that $\psi(x) = r$ and thus $\psi$ is surjective.

Since the module $R/R^p$ is a $\F_p[ T ]$-cyclic of order $p^{j-1}$,
it follows that $H^1(F, A^-(L_j))$ is $\F_p[ T ]$ cyclic of order
$p^{j-1}$ too. We let $h = \psi^{-1}(\pi'(w))$, where $\psi'(w)$
generates $R/R^p$ as an $\F_p[ T ]$ module. We note that $R, w, h,
\psi$ all depend on $j$ and, for all $n \geq j$ there is a module
$R_n$ and a bijection of $\F_p[ T ]$-modules $\psi_n : H^1(F,
A^-(L_n)) \ra R_n/R_n^p$, which is constructed in a similar way as
above. One may choose a norm coherent sequence $h = (h_n)_{n \in \N}$
such that $h_n$ generates $H^1(F, A^-(L_n))$ as an $\F_p[ T ]$-module.

We claim that $[ A^-(L) : \Lambda h ] < \infty$.  Since $\id{N} h = 0$
it follows that $-p h = t^{p-1} u^{-1}(t) h$ so the $t$-rank of
$\Lambda[ t ] h$ is at most $p-1$; here the $t$-rank is the rank of
$\Lambda[ t ] h/\Lambda h$. Let $f \in \Lambda$ be the minimal
annihilator polynomial of $h$ and $g \in \Lambda$ be the one of $s^k
h$ for some $k < p-1$. We claim that $f = g$; we have indeed $f (s^k
h) = s^k ( f h ) = 0$ so $g \ | \ f$. On the other hand, $s^k g h = 0$
implies that
\[ - p g h = (u^{-1} s^{p-1-k}) (s^k g h) = 0, \quad \hbox{ hence}
\quad g h = 0, \] so $g \ | \ f$ too, and thus $f = g$ as claimed.

Assuming now that $[ A^-(L) : \Lambda h ] = \infty$, it follows that
there is some $k < p-1$ such that $\Z_p t^k h \cap \Lambda h = 0$.
Suppose there are $c \geq 0, g(T) \in \Lambda$ with $p^c t h = g(T)
h$, and by iteration, $p^{(p-1)c} t^{p-1} = g(T)^{p-1} h = p u(t) h$,
so $\Z_p t^k h \cap \Lambda h \neq 0$ for all $k > 0$ and thus $[
A^-(L) : \Lambda h ] < \infty$. It remains that $\Z_p t h \cap \Lambda
h = 0$ and thus $A^-(L) = \bigoplus_{i=0}^{p-2} t^i \Lambda h$, so in
particular $\Lambda h \cap t A^-(L) = 0$.  However, from $\id{N} h =
0$ we deduce that $p h = -t^{p-1} u^{-1}(t) \in \Lambda h \cap t
A^-(L)$.  This is a contradiction which implies that this case cannot
occur and thus $[ A^-(L) : \Lambda h ] < \infty$.

Let $G(T) \in \Z_p[ T ]$ be a distinguished polynomial with $G(T) h =
t v(T) h \in t \Lambda h$. We show that $\deg(h) \geq p^{j-1}$;
indeed, an iteration yields $G(T)^i h = (v(T) t)^i h, \ 0 \leq i <
p$. Inserting this relation in $\id{N} h = 0$ we obtain $( G(T)^{p-1}
+ O(p) ) h = 0$, so Weierstrass preparation implies that $h$ has an
annihilator $H(T) = G(T)^{p-1} + O(p)$ of degree $\deg(H) = (p-1)
\deg(G) \geq (p-1) p^{j-1}$.  Therefore $\deg(G) \geq p^{j-1}$, as
claimed. The same argument implies that $\prk(\Lambda h / t \Lambda h)
= p^{j-1}$.  We have $A^-(L) = \sum_{i=0}^{p-2} t^i \Lambda h$, so for
arbitrary $z \in A^-(L)$ there are polynomials $z_i(T) \in \Z_p[ T ]$
with $\deg(z_i) < p^{j-1}$ such that $z = \sum_{i=0}^{p-2} z_i(T) t^i
h$.
 
We show that the module $H(p) = \sum_{i=0}^{p^{j-1}-1} \Z_p T^i h
\subset A^-(L)$ is $\Z_p$-pure.  Indeed, consider $x \in A^-(L)$ such
that $p^c x \in H(p)$ for some $c > 0$ and let
\[ p^c x = g(T) h = p^c \sum_{i=0}^{p-2} t^i x_i(T) h,\] with
$\deg(g), \deg(x_i) < p^j$. By separating terms, we obtain $(g(T) -
p^c x_0(T)) h \in t \Lambda h$.  Since $\deg(g(T) - p^c x_0(T)) <
p^{j-1}$ it follows from the previous remarks, that $g(T) = p^c
x_0(T)$, so $p^c x \in p^c \Lambda h$, thus $x \in \Lambda h$, as
claimed.

Finally, we prove that the shifted module $H'(p)$ is also pure. First
note that $[ B : \Lambda' h ] < \infty$, the proof being identical
with the one above, after replacing $T$ by $U$ and $\Lambda$ by
$\Lambda'$. We obtain a decomposition
\[ B = \sum_{i=0}^{p-2} t^i \Lambda' h ,\] and the proof that $H'(p)
:= \sum_{i=0}^{p^{j-1}-1} \Z_p U^i h$ is $\Z_p$-pure follows the same
pattern as the one for $H(p)$.
\end{proof}

We now relate the construction above to some given CM field. Let
$\B/\Q$ be the $\Z_p$-extension of $\Q$ and let the intermediate fields
be numbered by $\B_1 = \Q$ and $[ \B_{n+1} : \B_n ] = p$. For a number
field $\M$ we let $l = l(\M) \geq 1$ be defined by $\M \cap \B =
\B_l$. With this we note the following
\begin{fact}
\label{dk}
Let $\M$ be a galois CM number field containing the \nth{p} roots of
unity with $l = l(\M)$ and $d = \disc(\M)$. The fields $K, L$ in the
Lemma \ref{inert} can be chosen such that the fields $L, \M$ are linearly 
disjoint and $(\disc(L), p d) = 1$.
\end{fact}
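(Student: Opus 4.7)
The plan is to exhibit $K$ and $L$ by successive Chebotarev/congruence selections which keep the ramified primes of $L/\Q$ strictly outside $\{p\}\cup\supp(d)$, and then to deduce linear disjointness from the Minkowski discriminant bound. Throughout I would keep the construction of Lemma \ref{inert} intact, adding only two disjointness conditions on top of the choices made there.

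First I would pick the squarefree integer $D>0$ defining $K=\Q[\sqrt{-D}]$ exactly as in the proof of Lemma \ref{inert} (that is, with $p\nmid h(K)$ and $p$ inert in $K$), but imposing in addition $\gcd(D,2pd)=1$. This extra coprimality is a congruence condition modulo $8pd$, so it cuts out a positive proportion of the admissible $D$; combined with the analytic class-number bound for $D<cp^2$ used in Lemma \ref{inert}, such $D$ still exists. With this choice, the odd rational primes ramifying in $K/\Q$ are exactly the odd divisors of $D$, all coprime to $pd$.

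Next, with $K$ and the level $j$ of Lemma \ref{inert} fixed, I would choose the rational prime $q$ by Chebotarev inside the compositum $H\cdot\Q[\zeta_{p^{j+1}}]\cdot\M$, where $H$ is the $p$-Hilbert class field of $K_j$. Beyond the conditions already in Lemma \ref{inert} ($q$ totally split in $H\cdot\Q[\zeta_{p^j}]$, inert in $K_{j+1}/K_j$, and $q\equiv 1\bmod p^j$), I would further require $\gcd(q,pdD)=1$, which is a finite exclusion. The joint conditions cut out a non-empty union of Frobenius conjugacy classes in $\Gal(H\cdot\Q[\zeta_{p^{j+1}}]\cdot\M/\Q)$ because the splitting prescriptions at $q$ concern only the $H\cdot\Q[\zeta_{p^{j+1}}]$-factor, while the factor coming from $\M$ is free and can accommodate any Frobenius. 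By Chebotarev, infinitely many such $q$ exist.

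Finally, $F\subset\Q[\zeta_q]$ is ramified only at $q$ and $K/\Q$ only at the odd divisors of $D$ (and possibly $2$ if $D\equiv 1\bmod 4$ is handled), so $\disc(L)$ divides a power of $2Dq$, which by construction is coprime to $pd$; this gives the discriminant assertion (one fixes a priori $D$ odd to remove any worry about $2$). For linear disjointness, any prime ramifying in $L\cap\M$ would have to ramify in both $L$ and $\M$, hence divide both $\disc(L)$ and $d$; but these are coprime, so $L\cap\M/\Q$ is everywhere unramified, and by Minkowski $L\cap\M=\Q$. The only technical obstacle is verifying that the several Chebotarev prescriptions on $q$ are jointly realizable; this is the step where the initial coprimality $\gcd(D,d)=1$ pays off, because it forces the relevant Galois group to split as a direct product of a factor carrying the splitting data for $H\cdot\Q[\zeta_{p^{j+1}}]/\Q$ and a factor on which $\M$ imposes no constraint.
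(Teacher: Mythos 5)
Your proof is correct and follows essentially the same route as the paper: both arguments reduce the discriminant claim to the observation that $\rad(\disc(L))$ is carried by $D$ and $q$, impose coprimality of $D$ and $q$ with $pd$ as additional (easily satisfiable) conditions on the choices already made in Lemma~\ref{inert}, and deduce linear disjointness from coprimality of $\disc(L)$ and $d$. You spell out more explicitly the Chebotarev realizability of the joint conditions and the handling of the prime $2$, but the underlying idea is identical to the paper's one-line proof.
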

\begin{proof}
  If $K = \Q[ \sqrt{D} ]$, then
  $\rad(\disc(L)) = \rad(D q)$; it suffices thus to choose $D$ and $q$
  such that $(\rad(D) q, p d) = 1$. Since the discriminants are
  coprime, it also follows that $L$ and $\M$ are linearly disjoint, we
  set $\M' = L \cdot \M$. 
\end{proof}

We can complete the construction of the auxiliary fields in our
proof. Let $\K_{-3}, \K_{-2}$ be defined like at the beginning of this
chapter. With $d = \disc(\K_{-2})$ we construct $K, L$ as in Lemma
\ref{inert} and in Fact \ref{dk}, where we choose $j = 2$, so $\prk(A^-(L))
= p(p-1)$.  We let $\K_{-1}$ be the smallest field in the cyclotomic
$\Z_p$-extension of the compositum $\K_{-2} \cdot L$, in which
conditions A., B.  and C. at the beginning of \S 3.1 hold. Then
$\rg{k} := L_{\infty} \cap \K_{-1}$ and $k = l(\K_{-1}) =
l(\rg{k})$. We choose $N = 2 M$ like in Lemma \ref{nmm} and let
$\eu{q} \subset \K_N$ be a totally split prime, such that $\eu{q} \cap
\rg{k}_N \in h_N$ and $\KL/\K$ is a $\Z_p$-extension with $\K_N = \KL
\cap \K_{\infty}$ and $\eu{q}$ totally split in $\KL/\Q$ and inert in 
$\K_{\infty}/\K_N$. The field
$\F \subset \Q[ \zeta_q ]$ is herewith well defined, and we let $\K' =
\K \cdot \F, \KL' = \KL \cdot \F, \rg{k}' = \rg{k} \cdot \F$; we let
$h' \in A^-(L')$ be a sequence with $N_{L'/L}(h') = h$ and such that
$\eu{Q} \cap \rg{k}'_N \in h'_N$. Note that $h' \not \in \eu{M}
A^-(\rg{k}'_{\infty})$.

If the sequences $a, b$ are defined on base of $\eu{q}$, as described
after Lemma \ref{nmm}, then $b \not \in \eu{M} A^-(\KL')$. Indeed,
$N_{\KL',\rg{k}'}(b) = h'_N$ and the claim follows from the respective
claim on $h'_N$; a fortiori, $a \not \in \eu{M} A^-(\KL)$ and $a_N
\not \in A^-(\K_N)$. As a consequence we have:

\begin{corollary}
\label{lowpart}
In the construction defined above, assume that there exists a
$\gamma_M \in A^-(\K_M), \gamma_M \neq 0$, and a distinguished
polynomial $g(T) \in \Z_p[ T ]$ such that $p \gamma_M = g(T) a_M \not
\in \Ker(N : A^-(\K_M) \ra A^-(\rg{k}_M))$.  Then either $\gamma_M \in
\Lambda a_M$ or $\deg(g(T)) > p-1$.
\end{corollary}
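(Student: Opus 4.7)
The plan is to argue by contradiction: assume both $\gamma_M \notin \Lambda a_M$ and $d := \deg(g(T)) \leq p-1$, and derive a contradiction from the $\Z_p$-purity of the submodule $H(p) = \sum_{i=0}^{p-1} \Z_p T^i h$ of $A^-(\rg{k})$ furnished by Lemmata \ref{inert} and \ref{kex}.

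First, the distinguished form $g(T) = T^d + p g_1(T)$ with $\deg(g_1) < d$ lets one rewrite the hypothesis as
\[ T^d a_M = p\bigl(\gamma_M - g_1(T) a_M\bigr), \]
exhibiting $T^d a_M$ as $p$-divisible in $A^-(\K_M)$. Next apply the norm $N = N_{\K_M/\rg{k}_M}$; using the compatibility $N(a_M) = h_M$ from the construction following Lemma \ref{nmm}, this yields
\[ T^d h_M = p\bigl(N(\gamma_M) - g_1(T) h_M\bigr) \in p\, A^-(\rg{k}_M), \]
with the right-hand side nonzero by the assumption $g(T) a_M \notin \Ker(N)$. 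Now lift this $p$-divisibility to the inverse limit $A^-(\rg{k})$: pick $w \in A^-(\rg{k})$ projecting to $N(\gamma_M) - g_1(T) h_M$, so that $T^d h - p w \in \omega_M A^-(\rg{k})$. Since $\rg{k}$ is abelian, Ferrero--Washington gives $\mu(\rg{k}) = 0$, hence $A^-(\rg{k})/p A^-(\rg{k})$ is a finite-dimensional $\F_p$-vector space on which $T$ acts nilpotently. For the large $M = N/2$ fixed in Lemma \ref{nmm}, one has $\omega_M \equiv T^{p^M} \equiv 0 \pmod p$ as operators on $A^-(\rg{k})$, and therefore $T^d h \in p A^-(\rg{k})$. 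This contradicts the key purity statement of Lemma \ref{kex}: $T^k h \in p A^-(\rg{k})$ forces $k \geq p$, whereas $d \leq p-1$.

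The main technical obstacle is the lifting step, which hinges on two points: the surjectivity of the projection $A^-(\rg{k}) \to A^-(\rg{k}_M)$ (a Fukuda-style stabilization beyond the index chosen at the beginning of \S 3), and the absorption $\omega_M A^-(\rg{k}) \subset p A^-(\rg{k})$, which requires $M$ to dominate the $\lambda$-invariant of $A^-(\rg{k})$. The escape route ``$\gamma_M \in \Lambda a_M$'' in the statement reflects precisely the loophole in this contradiction: if the lift $w$ can in fact be chosen inside the $\Lambda$-span of the image of $a_M$, then the chain of equalities reverses to give $\gamma_M - g_1(T) a_M \in \Lambda a_M$, so that $\gamma_M \in \Lambda a_M$ and the alternative clause of the corollary holds.
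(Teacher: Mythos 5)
The proposal is correct in its broad outline but takes a genuinely different route from the paper, and the difference is worth spelling out. The paper's own argument works entirely at finite level in $A^-(L)$: it takes norms, uses the (non-direct) sum decomposition $A^-(L) = \sum_{i=0}^{p-2} t^i \Lambda h$ coming from the inert Thaine shift, applies Lemma \ref{ab} to pull back to a coherent sequence, and then invokes Weierstrass preparation together with the $\Z_p$-purity of $H'(p)$ to split into the two cases $g$ not $p$-divisible (giving $\deg g \geq p$) and $g = pg_1$ (giving $w \in \Lambda h$). Your approach instead lifts the identity $T^d h_M = p w_M$ to the inverse limit $A^-(\rg{k})$ and uses Ferrero--Washington ($\mu(\rg{k})=0$) to show the kernel of the projection $A^-(\rg{k}) \to A^-(\rg{k}_M)$ is contained in $p\,A^-(\rg{k})$ for $M$ large, so that $T^d h \in p\,A^-(\rg{k})$, contradicting the clause of Lemma \ref{kex} that $T^k h \in p A^-(\rg{k}) \Rightarrow k\geq p$. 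This is cleaner: it avoids the $F$-module decomposition and Weierstrass preparation entirely.

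Two points deserve attention. First, your identification of $\Ker(A^-(\rg{k}) \to A^-(\rg{k}_M))$ with $\omega_M A^-(\rg{k})$ is imprecise: by Iwasawa's Theorem 6 (cf.\ \rf{an} and Lemma \ref{ydef}) the kernel is $\omega_M A^-(\rg{k}) + \nu_{M,1}Y$, which can be strictly larger if there are several primes above $p$ in $\rg{k}$. The conclusion survives because $\nu_{M,1} \equiv T^{\deg\nu_{M,1}} \bmod p$ and $T$ still acts nilpotently mod $p$, so for $M$ large enough the full kernel lands in $p\,A^-(\rg{k})$; but the statement should be made with the correct kernel. Second, you should note explicitly that the condition ``$\omega_M$ acts as $0$ on $A^-(\rg{k})/p$'' is an \emph{additional} constraint on the choice of $M$, not one of the conditions in Lemma \ref{nmm}; it is compatible, since $\rg{k}$ is fixed before $M$ and $M$ can be taken as large as needed, but it must be included among the requirements on $M$.

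Finally, a structural remark: your argument never invokes the assumption $\gamma_M \notin \Lambda a_M$, nor even the condition $g(T)a_M \notin \Ker N$ in an essential way; as written, it yields $\deg(g) > p-1$ unconditionally, which makes the alternative clause of the corollary vacuous. This is actually a sharpening of the statement (and would shorten the downstream application in the main proof, which currently proceeds through the alternative $\gamma_M \in \Lambda a_M$), but you should say so explicitly rather than presenting the unused hypothesis as if it were a ``loophole'': in your last paragraph the asserted reversibility of the argument when $w$ lies in the $\Lambda$-span of $h$ is not established and is in any case unnecessary, since the contradiction you derived does not depend on that case distinction.
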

\begin{proof}
  Let $w_M = \Norm_{\K_M/\rg{k}_M}(\gamma_M) \in A^-(L_M)$ (we use
  here the notation $L_M = \rg{k}_M$ in conformity with the notation
  used in the general treatment above). We have shown in Lemma
  \ref{inert} that $H'(p) := \sum_{i=0}^{p-1} \Z_p T^i h$ is a
  $\Z_p$-pure module. Due to the choice of $\K$, we shall have $l(\K)
  = l(L) geq 2$ in this case, hence the notation $H'(p)$ -- however,
  we use the variables $\tau; T = \tau - 1$ for a generator of the
  galois group in the shifted extension $L_{\infty}/L$.

  We chose $M >> l(\K)$ so in particular, the rank of $A^-(L_n)$ is
  stable for $n > 1$ and for $x \in A^-(L) \setminus \eu{M} A^-(L)$ we
  have $x_1 \neq 1$ and $\ord(x_M) \geq p^{M}$.  Let $w \in A^-(L)$ be
  an arbitrary sequence which coincides with $w_M$ in $L_M$. Such a
  sequence is unique modulo $\omega_M A^-(L)$. We have shown that
  there is a decomposition $w = \sum_{i=0}^{p-2} W_i(T) t^i h$. Taking
  norms in the identity $p \gamma_M = g(T) a_M$ we obtain
\[ p w_M = \sum_{i=0}^{p-2} p W_i(T) t^i h_M = g(T) h_M \quad \Rightarrow \quad
 (g(T) - p W_0(T)) h_M \in t \Lambda h_M .
\]
Let $V(t, T) := \sum_{i=0}^{p-3} p W_{i-1}(T) t^i$ and define 
\[ z := (g(T) - p W_0(T)) h, \quad y := t V(t,T) h \in A^-(L) .\] We
have $z_M = y_M$, so by Lemma \ref{ab} and the choice of $L$ it
follows that there is some $Z \in A^-(L)$ with $z = y + \omega_M
Z$. Let $Z = \sum Z_i(T) t^i h$, so $ (g(T) - p W_0(T) - \omega_M(T))
h \in t \Lambda h$. By choice of $M$ we have $\deg(\omega_M) > p^2$,
say. If $g(T)$ is not $p$-divisible, the Weierstrass preparation
theorem implies that there is a polynomial $\tilde{g} \in \Z_p[ T ]$
of degree $\deg(\tilde{g}) = \deg(g)$ and a unit $u(T) \in
\Lambda^{\times}$ such that $u \cdot \tilde{g} h \in t \Lambda h$.
Therefore $\tilde{g} h \in t u^{-1} \Lambda h = t \Lambda h$ and the
fact that $H'(p)$ is $\Z_p$-pure implies that $\deg(g) \geq
p$. Otherwise, $g(T) = p g_1(T)$ and thus $p (w_M - g_1(T) h_M) = 0$.
In this case, we find like previously that $w = g_1(T) h +
O(\omega_M)$ and thus $w \in \Lambda h$, which completes the proof.
\end{proof}

\subsection{Proof of the main Theorem}
We assume that $\K_{-3}$ is a CM field in which the Leopoldt
conjecture fails, and use the auxiliary constructions completed in the
previous section, in order to obtain $\K, \KL, \KL'; a, b, \eu{q}$,
etc.  Recall that, as noted above \rf{betadef},
\[ s b_{\lambda} = -s b_{\mu} \in \Ker(\id{N}) \cap (L^- \cap M^-) =
\Ker(\id{N}) \cap C^- = C^-[ p ] . \] Therefore $T b_{\lambda} , T
b_{\mu} \in \Ker(s)$. The Theorem \ref{main} is proved as follows:
\begin{proof}
  We show first that $b_{\lambda} \in \iota(A^-(\KL))$. By Proposition
  \ref{h0split}, $ \id{H} := H^0(F, A^-(\KL'))$ is a free $\F_p[[ T
  ]]$ module of rank $r \geq 1$ and we write $\overline{ \cdot } :
  A^-(\KL') \ra \id{H}$ for the natural projection. Since $T
  b_{\lambda} \in \Ker(s)$ and $T^{\deg(f) +1} b_{\lambda} \in p
  \Lambda b_{\lambda} \subset \iota(A^-(\KL))$, it follows that
  $T^{\deg(f) +1} \overline{b}_{\lambda} = 0$. Thus
  $\overline{b}_{\lambda} \in \id{H}$ is a torsion element, and since
  the module $\id{H}$ is torsion-free, it follows that
  $\overline{b}_{\lambda} = 0$ and thus $b_{\lambda} \in
  \iota(A^-(\KL))$, as claimed.  We have $b_{\lambda} =
  \iota(\gamma')$ for some $\gamma' \in A^-(\KL)$ and
\[ T a = T a_{\lambda} + T \omega_M a_{\mu} = \id{N}( T b ) = \id{N}
(b_{\lambda} + b_{\mu}) = p \gamma' + (T \omega_M a_{\mu} + p c), \]
for some $c \in C^-(\KL')$. Hence, after canceling terms, we obtain
\[ T a_{\lambda} = p (\gamma' + c), \quad \hbox{and} \quad p \gamma =
T T^* a_{\lambda} \in \Lambda a_{\lambda}, \quad \hbox{for $\gamma :=
  T^* \gamma'$}. \]  
  
We raise a contradiction by showing that this identity is inconsistent
at finite levels.  Since $p \gamma = T T^* a_{\lambda}$ as coherent
sequences, the identity holds a fortiori at level $M$.  Letting $g(T)
= T T^*$, we notice from the definition that $a_M = a_{\lambda,M}$ and
$T T^* h_M = N_{\K_M/\rg{k}_M}( T T^* a_M) \neq 0$. The resulting
identity $ p \gamma_M = g(T) a_{\lambda, M} = g(T) a_M$ satisfies
premises of Corollary \ref{lowpart}. Since $\deg(g) = 2 < p$, it
follows that $\gamma_M \in \Lambda a_M$, and taking norms,
$N(\gamma_M) \in \Lambda h_M$ -- where we write $N = N_{\K/\rg{k}}$.
Let now $\beta_M = N(b_M) \in A^-(L'_M)$ and $y_M = g_2(T) h_M =
N(\gamma_M) \in A^-(L_M), z_M = N(b_{\mu,M})$. By definition, we have
$p z_M = 0, p \beta_M = h_M$ and consequently
\[ T p \beta_M = T h_M = p ( g(T) h_M + z_M) = p T T^* h_M \quad
\Rightarrow \quad T ( 1 - p T^* ) h_M = 0.\] The last identity implies
$T h_M = 0$ and thus $\prk(\Lambda h_M) = 1$. This contradicts the
fact that $M$ was chosen such that $\prk(\Lambda h_M) = \lambda^-(L) =
p(p-1) > 1$, showing that the extensions, $\KL, \KL'$ cannot exist and
confirms the claim of Theorem \ref{main}.
\end{proof}

\section{Appendix : Proof of Proposition \ref{fantom}}
Let $N = A^-(T^*)$ be defined in the cyclotomic $\Z_p$-extension of
$\K$, and suppose that $\K$ is a CM-extension with positive Leopoldt
defect and containing the \nth{p} roots of unity. We have mentioned
that $\zprk(\Gal(\M^+/\K_{\infty})) = \id{D}(\K)$, a fact which is
proved in all text-books.

Let $\M \subset \Omega(\K)$ be the product of all $\Z_p$-extensions of
$\K$ and let $\KP = \M^- \cap (\KH^- \cap \Omega_E)$. One can build an
explicit map $\rho : E(\K) \otimes_{\Z} \Z_p \ra \overline{E}$ such
that $\Ker(\rho) \sim \Rad(\KP/\K_{\infty})$ and thus
$\zprk(\Gal(\KP/\K_{\infty})) = \id{D}(\K)$ while reflection yields
$T^* \Gal(\KP/\K_{\infty}) = 0$.  The extension $\KP$ was for instance
investigated by Jaulent in \cite{Ja}; we denote it \textit{the phantom
  field} associated to the Leopoldt conjecture.

The $p$-ramified, $p$-abelian, real extensions of $\K_{\infty}$ are
obtained as Kummer extensions by taking roots of classes in $A^-$,
according to the point 2. in Remark \ref{shift}. In fact, if $\Omega^+
= \Omega^{X^-}$ is the fixed field of the minus part of $X :=
\Gal(\Omega/\K_{\infty})$, we have $\Omega^+ = \overline{\K}_{\infty}[
(A^-)\pinf ]$. Here $\overline{\K}$ indicates that we might have to
adjoin first the roots of some expressions of the type
$\wp/\overline{\wp}$, with $\wp$ a principal prime of $\K_{\infty}$
above $p$.

The galois properties of the Kummer pairing imply more precisely that
$\M^+ \subset \K_{\infty}[ N\pinf ]$, and since $T \Gal(\M^+ /
\K_{\infty} ) = 0$, it follows that $T^* \Rad(\M^+ /\K_{\infty} ) =
0$. Considering $Y := \Gal(\K_{\infty}[ N\pinf ]/\K_{\infty}) \cong
N^{\bullet}$, it follows in fact that $\M^+ = (\K_{\infty}[ N\pinf
])^{T Y}$. By duality it follows that $\Rad(\M^+ /\K_{\infty} ) \cong
N/(T^* N)$. There is an exact sequence of pseudoisomorsphisms:
\[ 1 \ra N[ T^* ] \ra N \ra N \ra N/(T^* N) \ra 1, \]
in which the central map is $T^* : N \ra N$. From this, we deduce 
\begin{eqnarray*}
  \id{D}(\K) & = & \zprk(\Gal(\M^+/\K_{\infty})) = \zprk(\Rad(\M^+/\K_{\infty})) \\ & = & \eprk(N/T^* N) =
  \eprk(N[ T^* ]) . 
\end{eqnarray*}
We have thus shown that $\eprk(A^-[ T^* ]) = \id{D}(\K)$ and for each
$\KL \subset \M^+$ there is a sequence $a \in A^-(T^*) \setminus T^*
A^-(T^*)$ with $\LK = \K_{\infty}[ a\pinf ]$. This completes the proof
of the Proposition \ref{fantom}.

The following useful fact was proved by Sands in \cite{Sd}:
\begin{lemma}
 \label{sa}
 Let $\KL/\K$ be a $\Z_p$-extension of number fields in which all the
 primes above $p$ are completely ramified. If $F(T) \in \Z_p[ T ]$ is
 the minimal annihilator polynomial of $L(\KL)$, then $(F, \nu_{n,1})
 = 1$ for all $n > 1$.
\end{lemma}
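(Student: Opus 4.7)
The plan is to reduce the coprimality of $F$ and $\nu_{n,1}$ to an irreducible-factor analysis and then derive a contradiction via Iwasawa's Theorem 6, exploiting the complete ramification of the primes above $p$ in $\KL/\K$.

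First, the identity $\nu_{n,1} = \omega_n/\omega_1$ expresses $\nu_{n,1}$ as a product of cyclotomic-type polynomials of the form $\Phi_{p^j}(T+1)$. Each $\Phi_{p^j}(T+1) \in \Z_p[T]$ is an irreducible distinguished polynomial of degree $\phi(p^j)$, because $\Q_p(\zeta_{p^j})/\Q_p$ is totally ramified of that degree and $\zeta_{p^j}-1$ is a uniformizer of the completion. Consequently, a nontrivial common divisor $g=\gcd(F,\nu_{n,1})$ must contain at least one $\Phi_{p^j}(T+1)$ as a divisor of $F$, and it suffices to rule out $\Phi_{p^j}(T+1)\mid F$ for every $j$ in the relevant range.

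Arguing by contradiction, suppose $\Phi_{p^j}(T+1)\mid F$. Since $F=\mathrm{lcm}_{y\in L}(f_y)$, there is $y\in L$ with $\Phi_{p^j}(T+1)\mid f_y$; replacing $y$ by $(f_y/\Phi_{p^j}(T+1))\cdot y$ and using irreducibility, I may assume $f_y=\Phi_{p^j}(T+1)$ exactly. Then $\Lambda y\cong \Lambda/(\Phi_{p^j}(T+1))$ is $\Z_p$-free of positive rank, so $y\neq 0$ is of genuine $\lambda$-type (not in $F(A)$), and from $\omega_j=\omega_{j-1}\,\Phi_{p^j}(T+1)$ we obtain $\omega_j y=0$.

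The essential step, and the main obstacle, is to convert $\omega_j y=0$ into a contradiction through Iwasawa's Theorem 6. Under total ramification at $p$-primes, the identification $A_n\cong X/Y_n$ holds with $Y_n=\omega_n X+\nu_{n,1}Y$, where $Y\subset X$ is the finitely generated $\Z_p$-module of inertial defects recorded in Lemma \ref{ydef}, of $\Z_p$-rank bounded by $s-1$ in terms of the number $s$ of primes of $\K$ above $p$. Identifying $A$ with $X$ through the Artin map, and combining $\omega_j y=0$ with the norm compatibility of the $Y_n$, I would show that $y$ must reduce into the $\nu_{m,1}Y$-component of $Y_m$ at every level $m\geq j$, yielding $y\in\bigcap_{m\geq j}\nu_{m,1}Y$. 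Since $Y$ has bounded $\Z_p$-rank while $\deg\nu_{m,1}\to\infty$, this intersection collapses to $\{0\}$, contradicting $y\neq 0$. The total-ramification hypothesis is used precisely in this last step: it ensures that $Y_n$ takes the clean form $\omega_n X+\nu_{n,1}Y$ with no extra contributions from ramification at primes not above $p$, without which the intersection-zero argument breaks down.
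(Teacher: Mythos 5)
The paper does not prove Lemma~\ref{sa} --- it is cited to Sands~\cite{Sd} without proof --- so there is no internal argument to compare against; I therefore assess your proposal on its own merits.

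Your reductions in the first half are fine: $\nu_{n,1}=\omega_n/\omega_1$ factors into the polynomials $\Phi_{p^i}(T+1)$, each irreducible over $\Z_p$ (Eisenstein at $p$), and if $\gcd(F,\nu_{n,1})\neq 1$ you may indeed produce a nonzero $y\in L(\KL)$ with minimal annihilator exactly $\Phi_{p^j}(T+1)$, whence $\omega_j y=0$. The gap is in the step you yourself flag with ``I would show'': there is no reason why $\omega_j y=0$ should force $y$ into $\nu_{m,1}Y$ for every $m\geq j$. The summands $\omega_m X$ and $\nu_{m,1}Y$ of $Y_m$ are not complementary, so ``reducing into the $\nu_{m,1}Y$-component'' is not a well-posed operation; moreover $y\in\nu_{m,1}Y\subset Y_m$ for all $m$ would say $y_m=0$ in $A_m$ for all $m$, i.e.\ $y=0$ in $X=\varprojlim X/Y_m$, and nothing in the hypotheses delivers that conclusion. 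The element $y$ merely lies in the $\omega_j$-torsion of $X$; it need not capitulate at any level.

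The mechanism that actually produces the coprimality is different and more direct, and it avoids the irreducible-factor analysis entirely. Set $Y_1:=Y+TX$; this is a $\Lambda$-submodule of $X$, and $Y_n=\nu_{n,1}Y_1\subset Y_1$ by~\rf{an}. Since $A_1=X/Y_1$ is finite, the inclusion $Y_1\hookrightarrow X$ is a pseudo-isomorphism, so $\mathrm{char}(Y_1)=\mathrm{char}(X)$. Now for each $n>1$ the group $A_n=X/\nu_{n,1}Y_1$ is finite, hence its $\Lambda$-submodule $Y_1/\nu_{n,1}Y_1$ is finite; that is, multiplication by $\nu_{n,1}$ has finite cokernel on $Y_1$. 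Passing to an associated elementary module $\bigoplus_i\Lambda/(f_i^{e_i})\oplus\bigoplus_j\Lambda/(p^{\mu_j})$, the quotient $\Lambda/(f_i^{e_i},\nu_{n,1})$ is finite if and only if $(f_i,\nu_{n,1})=1$, so finiteness of the cokernel is equivalent to $(\mathrm{char}(Y_1),\nu_{n,1})=1$, i.e.\ $(\mathrm{char}(X),\nu_{n,1})=1$. Since $F$ is the minimal annihilator of $L(\KL)\subset X$ and its irreducible factors occur among those of $\mathrm{char}(L(\KL))\,\big|\,\mathrm{char}(X)$, the coprimality $(F,\nu_{n,1})=1$ follows. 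Notice that total ramification enters exactly where you placed it --- it guarantees the clean description $Y_n=\nu_{n,1}Y_1$ for all $n\geq 1$ --- but the contradiction comes from the finiteness of the class groups $A_n$, not from an intersection collapsing inside $Y$.
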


\begin{Acknow}
  \nonumber This is an alternative approach to several previous
  attempts which used $\lambda$-type rather than $\mu$-type sequences;
  with that approach it was only possible to prove the case of the
  Leopoldt conjecture, in which $p$ is totally split in $\K$. The new
  approach grew from discussions with S\"oren Kleine, related to his
  PhD thesis that concerns Greenberg's Null Space Conjecture. It is to
  an important extent due to the involvement of Kleine with
  $\mu$-extension and the discussions had with him
  on related subjects, that made the elaboration of this proof
  possible. 

  I thank Cornelius Greither for his careful reading of preliminary
  drafts of this version. His remarks and the questions he asked
  during a period of almost one year, helped to substantially improve
  the quality of the paper and eliminate several flaws.  His valuable
  remarks lead to improvements of the text; in particular the proof of
  the first two conditions in Lemma 7 is due to him.

  I would like to express my gratitude to the precious few who
  assisted earlier attempts with discussions and comments: Grzegorz
  Banaszak, John Coates, Ralf Greenberg, Hendrik Lenstra, Florian
  Pop. Ina Kersten and the colleagues at the Mathematical Institute of
  G\"ottingen have provided during many years of evolution, an
  atmosphere of understanding and encouragement, which was supportive
  for long time research: to them my sincere gratefulness.

  Last but not least, this is to Theres and Seraina, who indulged over
  years with an ambig family presence of the researcher.
\end{Acknow}

\bibliographystyle{abbrv} 
\bibliography{leoMuV8}

\begin{thebibliography}{10}

\bibitem{Ax}
J.~Ax.
\newblock On the units of an algebraic number field.
\newblock {\em Illinois Journal of Mathematics}, 9:584--589, 1965.

\bibitem{Bab}
V.~Babaicev.
\newblock On the linear nature of the behavior of {Iwasawa's} $\mu$-invariant.
\newblock {\em Izv. {Akad.} {Nauk} {SSSR} - {Math.} {USSR} {Izvetija}},
  19(1):1--12, 1982.

\bibitem{Ba}
A.~Baker.
\newblock Linear forms in the logarithms of algebraic numbers {I}, {II}, {III}.
\newblock {\em Mathematika}, 13, 14:204--216; 102--107, 220--228, 1966, 67.

\bibitem{Br}
A.~Brumer.
\newblock On the units of algebraic number fields.
\newblock {\em Mathematika}, 14:121--124, 1967.

\bibitem{EKW}
M.~Emsalem, H.~Kisilevsky, and D.~Wales.
\newblock Ind\'ependance lin\'eaire sur $\overline{\Q}$ de logarithmes $p$ -
  adiques de nombres alg\'ebriques et rang $p$ - adique du groupe des unit\'es
  d'un corps de nombres.
\newblock {\em Journal of Number Theory}, 19:384--391, 1984.

\bibitem{Fu}
T.~Fukuda.
\newblock Remarks on $\mathbb{Z}_{p}$-extensions of number fields.
\newblock {\em Proc. Japan Acad. Ser. A Math. Sci.}, 70(8):264--266, 1994.

\bibitem{Gr}
R.~Greenberg.
\newblock On the {Iwasawa} invariants of totally real fields.
\newblock {\em American {Journal} of {Mathematics}}, 98:263--284, 1976.

\bibitem{Iw}
K.~Iwasawa.
\newblock On $\mathbb{Z}_{\ell}$ - extensions of number fields.
\newblock {\em Ann. Math. Second Series}, 98:247 -- 326, 1973.

\bibitem{Ja}
J.~Jaulent.
\newblock Sur les conjectures de {Leopoldt} et {Gross}.
\newblock In {\em Journ\'ees {Arithm\'etiques} de {Besan\c{c}on} (1985)},
  volume 147-48 of {\em Ast\'erisque}, pages 107--120, 1987.

\bibitem{Ja2}
J.~Jaulent.
\newblock Note sur la conjecture de {Leopoldt}.
\newblock http://front.math.ucdavis.edu/0712.2995, 2007.

\bibitem{Lau}
M.~Laurent.
\newblock Rang p - adique d'unit\'es et action de groupes.
\newblock {\em J. reine angew. Math.}, 399:81--108, 1989.

\bibitem{Le}
H.~Leopoldt.
\newblock Zur {Artihmetik} in {Abelschen} {Zahlk\"orper}.
\newblock {\em J. Reine Angew. Math}, 209:54--71, 1962.

\bibitem{Mo}
P.~Monsky.
\newblock Some {Invariants} of $\mathbb{Z}_p^d$-{Extensions}.
\newblock {\em Mathematische Annalen}, 255:229--233, 1981.

\bibitem{La}
{S. Lang}.
\newblock {\em Cyclotomic fields I and II}, volume 121 of {\em Graduate {Texts}
  in {Mathematics}}.
\newblock Springer, combined {Second} {Edition} edition, 1990.

\bibitem{Sd}
J.~Sands.
\newblock On small {Iwasawa} invariants and imaginary quadratic fields.
\newblock {\em Proccedings of the {American} {Mathematical} {Society}},
  112(3):671--684, 1991.

\bibitem{Wal}
M.~Waldschmidt.
\newblock Transcendence et \'exponentielles en plusieures variables.
\newblock {\em Inventiones {Mathematicae}}, 63, 1981.

\bibitem{Wa}
L.~Washington.
\newblock {\em Introduction to {Cyclotomic} {Fields}}, volume~83 of {\em
  Graduate {Texts} in {Mathematics}}.
\newblock Springer, 1996.

\end{thebibliography}


\begin{thebibliography}{1}

\bibitem{Alf}
J.~Borwein, I.~Shparlinski, and W.~Zudilin, editors.
\newblock {\em Number {Theory} and {Related} {Fields}: {In} {Memory} of {Alf}
  van der {Poorten}}.
\newblock Springer, 2013.

\bibitem{FW}
B.~Ferrero and L.~Washington.
\newblock The {Iwasawa} invariant $\mu_{p}$ vanishes for abelian number fields.
\newblock {\em Annals of Mathematics (2)}, (109):377--395, 1979.

\bibitem{Iw2}
K.~Iwasawa.
\newblock On the $\mu$-invariants of cyclotomic fields; in honor of {Y.}
  {Akizuki}.
\newblock pages 1--11, Kinokuniya, {Tokio}, 1973.

\bibitem{Mi}
P.~Mih\u{a}ilescu.
\newblock On {CM} $\mathbb{Z}_p$-extensions and the {Leopoldt} conjecture for
  {CM} fields.
\newblock Math. Arxiv, May 2014.
\newblock http://front.math.ucdavis.edu/XXXX.

\end{thebibliography}
\end{document}